\newcommand{\eps}{\epsilon}
\newcommand{\R}{\mathbb{R}}
\newcommand{\C}{\mathbb{C}}
\newcommand{\ud}{\text{d}}
\newcommand{\snabla}{\slashed \nabla}
\def\sdiv{\text{\texthtd}}
\def\scurl{\text{\texthtc}}
\def\stwist{\text{\texthtt}}
\newcommand{\coder}{\ud^*}
\newcommand{\FF}{\mathcal F}
\newcommand{\EE}{\mathcal E}
\newcommand{\HH}{\mathcal H} 
\newcommand{\KK}{\mathcal K} 
\newcommand{\LL}{\mathcal L}
\numberwithin{equation}{section}
\newcommand{\lAngle}{\mathopen{<}}
\newcommand{\rAngle}{\mathclose{>}}
\DeclareMathOperator*{\diverg}{div}
\def\clap#1{\hbox to 0pt{\hss#1\hss}}
\def\mathclap{\mathpalette\mathclapinternal}
\def\mathclapinternal#1#2{\clap{$\mathsurround=0pt#1{#2}$}}
\newtheorem{theorem}{Theorem}[section]  
\newtheorem{remark}[theorem]{Remark}  
\newtheorem{lemma}[theorem]{Lemma}
\newtheorem{definition}[theorem]{Definition}  
\newtheorem{proposition}[theorem]{Proposition}  
\title[Hertz potentials and spin-$s$ fields]{Hertz potentials and asymptotic 
properties of massless fields}
\author[L. Andersson]{Lars Andersson}
\email{laan@aei.mpg.de}
\address{Albert Einstein Institute, Am M\"uhlenberg 1, D-14476 Potsdam,
  Germany}
\author[T. B\"ackdahl]{Thomas B\"ackdahl}
\email{thomas.backdahl@aei.mpg.de}
\address{Albert Einstein Institute, Am M\"uhlenberg 1, D-14476 Potsdam,
  Germany}
\author[J. Joudioux]{J\'er\'emie Joudioux}
\email{jeremie.joudioux@aei.mpg.de}
\address{Albert Einstein Institute, Am M\"uhlenberg 1, D-14476 Potsdam,
  Germany}
\begin{document}

\date{\today \ {\em AEI ref: AEI-2013-161}}

\begin{abstract}
In this paper we analyze Hertz potentials for free massless spin-$s$ fields
on the Minkowski spacetime, with data in weighted Sobolev spaces. 
We prove existence and pointwise estimates for the Hertz potentials using a 
weighted estimate for the wave
equation. This is then applied to give 
weighted estimates for the
solutions of the spin-$s$ field equations, for arbitrary half-integer $s$. In
particular, the peeling properties of the free massless spin-$s$ fields are
analyzed for initial data in weighted
Sobolev spaces with arbitrary, non-integer weights. 
\end{abstract}

\maketitle


\section{Introduction}
The analysis by Christodoulou and Klainerman of the decay of massless fields
of spins 1 and 2 on Minkowski space \cite{Christodoulou:1990dd} served as an
important preliminary for their proof of the non-linear stability of
Minkowski space \cite{Christodoulou:1993vma}. The method used in
\cite{Christodoulou:1990dd} was based on energy estimates using the vector
fields method, see \cite{Klainerman:1985wn}. 
This approach was extended to fields of arbitrary spin by Shu
\cite{Shu:1991ta}. The approach of \cite{Christodoulou:1993vma} to the problem of
nonlinear stability of Minkowski space was later extended by Klainerman and
Nicolo \cite{2003CQGra..20.3215K}
to
give the full peeling behavior for the Weyl tensor 
at null infinity.

The vector fields method makes use of the conformal symmetries of Minkowski
space to derive conservation laws for higher order energies, which then
via the Klainerman Sobolev inequality \cite{Klainerman:1987hz}
give pointwise estimates for the solution of the wave equation. An analogous
procedure is used for the higher spin fields in the papers cited above. 
This procedure  gives pointwise  
decay estimates for the solution of the Cauchy problem of the wave equation and
the spin-$s$ equation, for 
initial data of one particular falloff at spatial infinity. The conditions on the initial data
originate in the
growth properties of the conformal Killing vector fields on Minkowski space, which are used in the energy estimates. 

Let $H^j_\delta$ be the weighted $L^2$ Sobolev spaces on $\R^3$, that is to say the space of functions $\phi$ for which
$$
\sum_{k=0}^j\Vert \lAngle r\rAngle^{-\left(3/2 + \delta\right) +k}D^k \phi  \Vert_{L^2(\mathbb{R}^3)}^2<\infty, \quad  \text{ where }\lAngle r\rAngle = (1+r^2)^{1/2}.
$$  We use the
conventions\footnote{The spaces $H^j_\delta$ are in \cite{Bartnik:1986dq}
denoted by
$W^{j,2}_{\delta}$.}
of Bartnik \cite{Bartnik:1986dq}. 
Since we shall use the 2-spinor formalism, as defined in Section \ref{sec:conventions}, 
we work here and throughout the paper on Minkowski space with signature ${+}\,{-}\,{-}\,{-}$. 
Consider the Cauchy problem for
the wave equation 
\begin{align}
\square \phi &= 0, \label{eq:wave-intro} \\ 
\phi \big{|}_{t=0} = f\in H^j_{-3/2}, \quad &  \partial_t \phi \big{|}_{t=0} = g\in H^{j-1}_{-5/2}.\nonumber
\end{align} 
Then, for $j \geq 2$, 
one has the
estimate 
\cite{Klainerman:1985wn} 
\begin{equation}\label{eq:CKwave}
 |\phi(x,t)|
\leq C\lAngle u \rAngle^{-1/2} \lAngle v \rAngle^{-1}
(\Vert f\Vert_{j,-3/2} + \Vert g\Vert_{j-1,-5/2}),
\end{equation} 
where $\lAngle u\rAngle = (1 + u^2)^{1/2}$, $u=t-r$ and $v=t+r$. 
On the other hand, if one considers the wave equation \eqref{eq:wave-intro}
on the flat 3+1 dimensional Minkowski spacetime 
as a special case of the conformally covariant form of the
wave equation
$$
(\nabla^a \nabla_a +  R/6) \phi = 0,
$$
the condition on the initial data which is compatible with regular conformal
compactification is 
\begin{equation}\label{eq:confdata}
\partial^\ell f = \mathcal{O}(r^{-2-\ell}), \quad \partial^{\ell} g = \mathcal{O}(r^{-3-\ell}).
\end{equation} 
Making use of standard energy estimates in the conformal compactification
of Minkowski space one arrives, after undoing the conformal compactification, 
at 
\begin{equation}\label{eq:confdecay}
|\phi(x,t)| = \mathcal{O}\left (\lAngle u \rAngle^{-1} \lAngle v \rAngle^{-1} \right ) 
\end{equation} 
see the discussion in \cite[\S 6.7]{MR1466700}. 
In particular, there is an extra $r^{-1/2}$ falloff in the condition
\eqref{eq:confdata} on the initial data compared to \eqref{eq:wave-intro} as well as an 
additional factor $\lAngle u \rAngle^{-1/2}$ decay
in the retarded time coordinate $u$ in \eqref{eq:confdecay} 
compared to \eqref{eq:CKwave}. 

Let us now consider the case of higher spin fields. Let $2s$ be a positive
integer and let $\phi_{A\dots F}$ be a totally symmetric spinor field of
spin-$s$, i.e with $2s$ indices. 
The Cauchy problem for a massless
spin-$s$ field is 
\begin{align*} 
\nabla_{A'}{}^A \phi_{A \dots F} &= 0 , \\ 
\phi_{A \dots F} \big{|}_{t=0} &= \varphi_{A \dots F} .
\end{align*} 
For $s \geq 1$,  the Cauchy datum $\varphi_{A\dots F}$ must satisfy the constraint equation 
$$
D^{AB} \varphi_{AB\dots F} = 0,
$$
where $D_{AB}$ is the intrinsic space spinor derivative on $\Sigma$, see
Section~\ref{sec:conventions}. The spin $1/2$ case does not have constraints. 

One of the main differences in the asymptotic behavior between a massless
scalar field satisfying a wave equation and a massless higher spin field is
the existence of a hierarchy of decay rates for the different null components
of the field along the outgoing null directions. This property, known as peeling, was first pointed out
by Sachs in 1961 \cite{Sachs:1961tw}.

Let $o_A, \iota_A$ be a spin dyad aligned with the outgoing and ingoing null
directions $\partial_v, \partial_u$, and let $\phi_{\bm{i}}$ be the scalars
of $\phi_{A\dots F}$ defined by 
$$
\phi_{\bm{i}}  = \phi_{A_1 \dots A_{\bm{i}} B_{\bm{i}+1} \dots B_{2s}} \iota^{A_1} \cdots
\iota^{A_{\bm{i}}} 
 o^{B_{\bm{i} +1}} \cdots o^{B_{2s}}.
$$
One says that $\phi_{A\dots F}$ satisfies the peeling property if the
components $\phi_{\bm{i}}$ satisfy 
$$
\phi_{\bm{i}} = {\mathcal O} ( r^{\bm{i} - 2s -1} ) , 
$$ along
the outgoing null geodesics with affine parameter $r$.  

In \cite{MR0175590}, Penrose gave two arguments for peeling of
massless fields on Minkowski space. The first, cf. 
\cite[\S 4]{MR0175590}, makes use of a 
representation of the field in terms of a Hertz
potential of order $2s$, \emph{i.e.} the field is written as a
derivative of order $2s$ of a potential satisfying a wave equation. 
Penrose assumed that the Hertz potential 
decays at a specific rate along outgoing
null rays. 
He then infered the peeling property from this decay assumption.

The second approach presented by Penrose, cf. \cite[\S 13]{MR0175590}, 
is based on the just mentioned fact together with the conformal invariance of
the spin-$s$ field equation. Solving the Cauchy problem in the conformally
compactified picture, as was discussed for the wave equation in \cite[\S
  6.7]{MR1466700}, and taking into account the effect of the conformal
rescaling, one recovers the peeling property for the solution of the massless
spin-$s$ equation on Minkowski space. 
Based on this analysis, Penrose conjectured that the peeling of massless fields at null infinity should be a generic property of asymptotically simple space-times.

The estimate proved in \cite{Christodoulou:1990dd} for the spin-1 or Maxwell
field can be stated in the present notation as 
$$
|\phi_{\bm{i}}(t,x)| \leq C \lAngle u \rAngle^{1/2 - \bm{i}} \lAngle v
\rAngle^{\bm{i}-3} \Vert \varphi_{AB} \Vert_{j,-5/2}, \quad
\text{ for } \bm{i}= 1,2,\quad  j\geq 2,
$$
while for the component $\phi_0$ one has 
$$
|\phi_0(t,x)| \leq C r^{-5/2} \Vert \varphi_{AB} \Vert_{j,-5/2},
$$
along outgoing null rays. Thus, this result does not give the peeling 
property for all components of $\phi_{AB}$, 
which is due to the fact that the norm $\Vert \varphi_{AB}
\Vert_{j,-5/2}$ is not compatible with the conformal compactification of
Minkowski space. Similarly for the spin-2 case, the result in
\cite{Christodoulou:1990dd} gives peeling for $\phi_{\bm{i}}$, $\bm{i}
= 2,3,4$ for initial data in $H^j_{-7/2}$, 
while peeling fails to hold for $\phi_{\bm{i}}$, $\bm{i} =
0,1$. 
On the other hand, the condition on the initial datum which is compatible with
a regular conformal compactification, and also gives peeling, is for a spin-$s$ field
\begin{equation}\label{eq:confcond}
\partial^\ell \varphi_{A\dots F} = \mathcal{O}(r^{-2s-2-\ell}).
\end{equation} 

In this paper we shall follow an approach outlined by Penrose in
\cite[\S 6]{MR0175590} 
to give a weighted decay estimate for spin-$s$ fields of arbitrary, half-integer
spin-$s$. The result proved here admits conditions on the initial data which 
include the ones 
considered in \cite{Christodoulou:1990dd,Shu:1991ta}, as well as 
conditions which
are compatible with peeling, but also general weights. 
The results of this paper clarify
the relation between the condition on the initial
datum and the peeling property of the solution of the spin-$s$ field equation. 
In this paper we shall make use of some estimates for elliptic equations in
weighted Sobolev spaces, and for technical reasons these are not compatible
with the integer
powers or $r$ as in \eqref{eq:confcond}.

The method we shall use is based on the notion of Hertz potentials.
The reader can refer for background to Stewart \cite{stewart:1979}, Fayos \emph{et al.} 
\cite{fayos:llanta:llosa:1985} Benn \emph{et al.} \cite{benn:charlton:kress:1997} and references therein. Since Minkowski space is
topologically trivial, there is no obstruction to representing 
a Maxwell field on Minkowski space in terms of a Hertz
potential. However, this general fact does not provide estimates for the
potential.  In this
paper we prove the necessary estimates not only for the Maxwell field but for
fields with general half-integer spins. 

To introduce the method we here consider the spin-1 case, \emph{i.e.} the
Maxwell field on 3+1 dimensional Minkowski space. 
With our choice of signature, the metric on the spatial slices is negative definite. 

The Maxwell field, in the absence of sources, is a real differential 2-form $F_{ab}$ 
which is closed and
divergence free. 
For convenience we consider the complex anti-self-dual form 
$$
\FF_{ab} = F_{ab} + i * F_{ab}, 
$$
which corresponds to a symmetric 2-spinor $\phi_{AB}$ via
\begin{equation} \label{eq:F=phi}
\FF_{ab} = \phi_{AB} \eps_{A'B'} .
\end{equation} 
In terms of $\FF_{ab}$, the Maxwell equation
is simply 
\begin{equation}\label{eq:maxwellinforms} 
(\ud \FF)_{abc} = 0.
\end{equation} 
\newcommand{\ii}{\bm{i}} 
\newcommand{\norm}{\xi} 
Let $\norm^a = (\partial_t)^a$ 
be the unit normal to the Cauchy surface $\Sigma = \{ t = 0\}$.
Given a complex 1-form $\EE_a$ on $\Sigma$,
satisfying the Maxwell constraint equation
\begin{equation}\label{eq:maxwellconstr}
\ud^*\EE=0,
\end{equation}
there is a unique solution of the Maxwell equation such that 
$$
(\FF_{ab} \norm^b)\big{|}_{\Sigma} =  \EE_a.
$$
Now, let $\HH_{ab}$ be a self-dual
2-form which solves the wave equation 
\begin{equation}\label{eq:H-wave}
\square 
\HH_{ab} = 0,
\end{equation} 
where 
$\square = \ud \coder + \coder \ud$ 
is the Hodge wave operator, and
$\coder = * \ud *$ is the exterior co-derivative in dimension 4. 
Defining the form $\FF_{ab}$ by 
\begin{equation}\label{eq:F=H}
\FF_{ab} = \ud \coder \HH_{ab} ,
\end{equation} 
we have using \eqref{eq:H-wave} that $\FF_{ab}$ is anti-self-dual, and solves the
Maxwell equation. The form $\HH_{ab}$ 
is called a Hertz-potential for $\FF_{ab}$. Since
we are working on Minkowski space, the wave equation \eqref{eq:H-wave} is
just a collection of scalar wave equations for the components of $\HH_{ab}$, and
hence the solution to \eqref{eq:H-wave} for given Cauchy data can be analyzed
using results for the scalar wave equation. Thus, if we are able to relate
the Cauchy data for the Maxwell field $\FF_{ab}$ 
to the Cauchy data for $\HH_{ab}$, we
may use the Hertz potential construction to prove estimates for the solution
of the Maxwell field equation, starting from estimates for the wave
equation. 

Let the complex 
1-form $\KK_a$ be the ``electric field'' corresponding to $\HH_{ab}$, 
$$
\KK_a = \HH_{ab} \norm^b.
$$
A calculation shows that if $\FF_{ab}$ is defined in terms of $\HH_{ab}$ by
\eqref{eq:F=H}, the Cauchy data for \eqref{eq:H-wave}
is related to the
Cauchy data for $\FF_{ab}$ by 
\begin{equation}\label{eq:E=K} 
\EE_a =  - *\ud* \ud \KK_a - i *\ud \partial_t\KK_a,
\end{equation} 
where in the right hand side we restrict $\KK_a$ and $\partial_t \KK_a$ to
$\Sigma$, and 
$\ud, *$ 
act on objects on $\Sigma$\footnote{Recall that, in dimension 3, for $p$-forms, the exterior co-derivative is given by $\ud^\ast = (-1)^p \ast\ud \ast{}$. }. 
The constraint equation $\coder \EE_a = 0$  holds automatically for $\EE_a$
given by \eqref{eq:E=K}. 

Now, in order to prove estimates for the Maxwell equation with data 
$\EE_a \in H^j_\delta$, satisfying $\coder \EE = 0$, 
it is sufficient to show that for any such $\EE_a$, there exists a 1-form
$\LL_a  \in H^{j+1}_{\delta+1}$ such that 
\begin{equation}\label{eq:E=L}
\EE_a = - i *\ud \LL_a .
\end{equation} 
Then taking $\HH_{ab}$ to be a solution of \eqref{eq:H-wave} with Cauchy data 
$$
\HH_{ab} \big{|}_{t=0} = 0, \quad 
\left ( \partial_t \HH_{ab} \norm^b \right ) \big{|}_{t=0}=\LL_a ,
$$
gives a solution to the Maxwell
equation via \eqref{eq:F=H}. Estimates for the wave equation can thus be
applied to give estimates for the solution of the Maxwell field equation.

The operator $*\ud$ acting on 1-forms, which appears in Eqs.~\eqref{eq:E=K} and
\eqref{eq:E=L} is simply the curl operator. The first important thing to notice about Eq.~\eqref{eq:E=L} is the fact this is an overdetermined system of partial differential equations: the electric field has to satisfy constraints in order to ensure the existence of a solution. The integrability of these equations is well-known to be described by the elliptic complex
\begin{equation}\label{eq:derham}
 C^\infty(\R^3,\R)\stackrel{\ud}{\longrightarrow} \Lambda^1\stackrel{\ast \ud}{\longrightarrow} \Lambda^1  \stackrel{\coder}{\longrightarrow}C^\infty(\R^3,\R).
\end{equation}
This complex, which is derived from the standard de Rham complex plays a crucial role in the analysis of Hertz potentials. The geometric constraint $\coder \EE_a =0$ guarantees that Eq.~\eqref{eq:E=L} can be solved, at least formally. This is described in Section~\ref{sec:Integrability}.

In Proposition~\ref{proprepspin1} below, we shall prove  
that for non-integer weights $\delta $, 
$*\ud : H^{j+1}_{\delta+1} \to \ker \coder \cap H^j_{\delta} $
is a surjection, and the estimate 
\begin{equation}\label{estimateLE}
\Vert \LL_a\Vert_{j+1,\delta+1} \leq C \Vert\EE_a\Vert_{j,\delta}
\end{equation}
holds, for some constant $C$. 
It is instructive to consider two special weights in the spin-1 case. 
First let $\EE_a\in H^j_{-5/2}$ be a solution to the Maxwell constraint equation \eqref{eq:maxwellconstr}. This corresponds to the case considered in
\cite{Christodoulou:1990dd}. 
We shall now construct a solution $\LL_a\in H^{j+1}_{-3/2}$ to \eqref{eq:E=L}, which will give us Cauchy data for the Herz potential.
Recall that the Laplacian 
$\Delta = \ud \coder + \coder \ud$ is a surjection $H^{j+2}_{-1/2} \to H^j_{-5/2}$, cf.  Proposition~\ref{prop:elliptictoolbox}. Hence, we can find $\mathcal{Q}_a\in H^{j+2}_{-1/2}$ such that 
$$
\EE_a=\Delta \mathcal{Q}_a.
$$
Making use of the constraint equation \eqref{eq:maxwellconstr}, and the fact that $\Delta$ and $\ud^*$ commute, we find
$$
0=\ud^* \EE=\ud^* \Delta \mathcal{Q}= \Delta \ud^*\mathcal{Q}.
$$
Hence, $\ud^*\mathcal{Q}\in\ker \Delta \cap H^{j+1}_{-3/2}$. Due to injectivity of $\Delta:H^{j+1}_{-3/2} \to H^{j-1}_{-7/2}$, we have $\ud^*\mathcal{Q}=0$, and hence, 
$$\EE_a=\ud \coder\mathcal{Q}_a + \coder \ud\mathcal{Q}_a=\coder \ud\mathcal{Q}_a.$$
Therefore we can take $\LL_a=i (*\ud \mathcal{Q})_a$. In this situation, full peeling does not hold, and the Cauchy data for the Hertz potential is
in $H^{j+2}_{-1/2} \times H^{j+1}_{-3/2}$. Secondly, we consider the case $\EE_a \in \ker \coder  \cap H^j_{-7/2}$ where 
full peeling holds. In this case, the Cauchy data for the Hertz potential is in $H^{j+2}_{-3/2} \times H^{j+1}_{-5/2}$. The relevant fact about the Laplacian is now that $\Delta : H^{j+2}_{-3/2} \to H^j_{-7/2}$ is Fredholm with cokernel consisting of constant forms. Since a constant form $\eta_a$ 
is automatically closed, using the integrability of the complex \eqref{eq:derham}, cf. Remark~\ref{integrability4}, it is also exact, $\eta_a = (\ud f)_a$ for some $f$. This fact is usually referred to as the Poincar\'e Lemma and can as we will see below, be generalized to higher spin. It follows that the cokernel of $\Delta$ is automatically $L^2$-orthogonal to $\ker \coder  \cap H^j_{-7/2}$. Hence, 
$$\Delta:H^{j+2}_{-3/2} \to \ker \ud^*\cap  H^j_{-7/2}$$
is surjective. Using this fact, by the argument above we can find a solution $\LL_a\in H^{j+1}_{-5/2}$ to equation \eqref{eq:E=L}. For the case of general weights, see Proposition~\ref{proprepspin1}.

It is important to notice that, for weights $\delta < -4$, none of the elements in the cokernel have a preimage under the Laplacian. 
However, in Lemma \ref{lemma:orthogonalpreimage}, we prove that one can add an element $\zeta_a$ in the image of $\ast \ud$, so that $E_a-i(\ast \ud\zeta)_a$ is orthogonal to the cokernel of $\Delta$, so that a preimage can be found. This way we can always find a preimage under $\ast \ud$, even if we can not find a preimage under $\Delta$.

In the spin-$s$ case, the argument follows the same outline. However, this requires an extension of 
the complex \eqref{eq:derham} to arbitrary spin, and relating the corresponding operators to an elliptic operator, which in this case will be a power of the Laplacian $\Delta^{\lfloor s \rfloor}$.
Nonetheless, two important preliminary results have to be proved to this end: an extension of the complex 
\eqref{eq:derham} to arbitrary spin and a decay result for the solution of the scalar wave equation with initial 
data in Sobolev spaces of arbitrary non-integer weights and its derivatives.

As far as the authors know, not much work has been performed to extend the Hodge-de Rham theory to arbitrary weighted Sobolev spaces on the one hand and to arbitrary spin on the other hand. Cantor \cite{Cantor:1981bs} proved the first steps of a Hodge decomposition for tensors in weighted Sobolev spaces and Weck and Witsch \cite{Weck:1994ei} gave a Hodge-Helmholtz decomposition for forms in $L^p_{\delta}$ spaces. These two results are nonetheless not sufficient for our purposes.  As far as the extension to arbitrary spin is concerned, for analytic solutions of the massless field equation of arbitrary spin on the Euclidean 4-sphere, Woodhouse \cite[Section 10]{Woodhouse:1985wx}, describes the procedure to construct the intermediate potentials, up to the penultimate, in the form language. Penrose \cite{Penrose:te, Penrose:tk} extended this construction for the spin $3/2$ in a wider context. Using this formalism, a topological condition on the domain of validity of the representation on the sphere  is given.

In order to apply standard elliptic theory, one has to perform a $3+1$ splitting of the equation relating the field and its Hertz potential
\begin{equation}\label{eq:hertz}
\phi_{A\dots F} = \nabla_{AA'}\dots \nabla_{FF'}\chi^{A'\dots F'}.
\end{equation}
Assuming that the initial data for the potential are
$$
\chi^{A'\dots F'}|_{t=0} = 0 \text{ and } \partial_t\chi^{A'\dots F'}|_{t=0} = \sqrt{2}\tau^{AA'} \dots\tau^{FF'} \zeta_{A\dots F},
$$
the $3+1$ splitting (performed in detail in Section~\ref{sec:spacespinorsplit}) of Eq.~\eqref{eq:hertz} gives
\begin{equation}\label{eq:varphi=zeta}
\varphi_{A\dots F} = \left(\mathcal{G}_{2s}\zeta\right)_{A\dots F},
\end{equation}
where $\mathcal{G}_{2s}$ is a differential operator of order $2s-1$ (see Definition 
\ref{def:mathcalg}). In the spin-$1$ case, if one translates the spin formalism into the form language, one recovers Eq.~\eqref{eq:E=L}.

Finding a generalization of the complex~\eqref{eq:derham} to arbitrary spin, which encompasses equation~\eqref{eq:varphi=zeta}, and especially the operator $\mathcal{G}_{2s}$, is then a necessary step in the construction of the initial data of the Hertz potential. A generalization of the de Rham complexes have been introduced in the context of the deformation of conformally flat structures by Gasqui-Goldschmidt \cite{Gasqui:1984vu} and, later generalized by Beig \cite{Beig:1997wo} (see Section~\ref{sec:Integrability}). They obtained differential complexes corresponding to the spin-2 case. We generalize, on $\R^3$, their results to arbitrary spin. One introduces the fundamental operators
\begin{align*}
(\sdiv_{2s}\phi)_{A_1\dots A_{2s-2}}\equiv{}&D^{A_{2s-1}A_{2s}}\phi_{A_1\dots A_{2s}},\\
(\stwist_{2s}\phi)_{A_1\dots A_{2s+2}}\equiv{}&D_{(A_1 A_2}\phi_{A_3\dots A_{2s+2})}.
\end{align*}
We prove that, if $\mathcal{S}_{2s}$ is the space of symmetric space spinor fields on $\R^3$, the sequence
\begin{equation*} 
\mathcal{S}_{2s-2}\stackrel{\stwist_{2s-2}}{\longrightarrow} \mathcal{S}_{2s} \stackrel{\mathcal{G}_{2s}}{\longrightarrow} \mathcal{S}_{2s}\stackrel{\sdiv_{2s}}{\longrightarrow} \mathcal{S}_{2s-2},
\end{equation*}
is an elliptic complex. For $s=1$, one recovers the  complex~\eqref{eq:derham}. In the situation considered by Gasqui-Goldschmidt and Beig for the spin-2 case, the operator $\mathcal{G}_{4}$ is the linearized Cotton-York tensor. See Section~\ref{sec:Integrability} for details.

The existence of a solution to \eqref{eq:E=L} with the estimate \eqref{estimateLE}
is then used together with a weighted 
estimate for the solution of the wave equation with initial $(f,g) \in
H^j_\delta \times H^{j-1}_{\delta-1}$. 
As we have not found a
sufficiently general result in the literature, in particular which covers the
range of weights $\delta > -1$ which we need 
for the applications to the Hertz
potential in the range where full peeling fails to hold (including the
situation considered in \cite{Christodoulou:1990dd}), we prove the required
result in Section~\ref{sec:estsol}. This result consists in a direct estimate for 
the solution of the wave equation, using the representation formula. 
For $\delta < -1$, we have in the exterior region 
$$
 |\phi(t,x)|\leq  C  \lAngle v\rAngle^{-1}\lAngle u\rAngle^{1+\delta}
\left ( \Vert f\Vert_{3,\delta}+\Vert g\Vert_{2,\delta-1} \right ) ,
$$
see Proposition~\ref{prop:decaywave}.

The main result of the paper, stated in Theorem \ref{maintheorem}, combines the
analysis of the Hertz potential Cauchy data in weighted Sobolev spaces with
the weighted estimate for the solution of the wave equation to provide a
weighted estimate for the solution to the massless spin-$s$ field equation. 
The peeling properties of the spin-$s$ field with initial data in weighted
Sobolev spaces are analyzed in detail.

Here it is important to note that the detailed decay estimates for the
components of the \emph{massless spin-$s$ field} $\phi_{A\dots F}$ relies on the
decay of the Hertz potential $\chi^{A' \dots F'}$, for which all components decay as 
solutions to the scalar \emph{wave equation}.
The decay properties of the components of $\phi_{A\dots F}$ are 
due to their relation to the derivatives of $\chi^{A'\dots F'}$ in terms of a
null tetrad. In particular, the falloff condition on the initial datum of the massless 
fields which ensures that the peeling property holds is given explicitly. All the intermediate 
states, where the peeling fails because the initial datum does not falloff sufficiently, such 
as the decay result obtained by Christodoulou-Klainerman \cite{Christodoulou:1990dd}, are 
clearly explained in terms of a decay result for the scalar wave equation.

\subsection*{Overview of this paper} 
 In Section~\ref{sec:prelim} we state our conventions and recall some basic
 facts about elliptic operators on weighted Sobolev spaces. 
In particular we introduce the Stein-Weiss operators 
divergence $\sdiv$, curl $\scurl$ and the twistor 
operator $\stwist$ for higher spin fields, as well as the fundamental higher order
operator $\mathcal G$ originating in the 3+1 splitting of the Hertz potential equation. 
In Section~\ref{sec:Integrability} we use these to introduce a 
generalization of the de Rham complex for spinor fields. 
The problem of constructing initial data
 for the Hertz potential is solved in Section~\ref{Section:ConstructPotentials}. 
The weighted estimate for the wave equation is given in Section~\ref{sec:estsol}, 
and the resulting estimates for spin-$s$ fields generated by potentials is given in 
Section~\ref{sec:estspin}. 
Everything is then tied together in Section~\ref{sec:mainres}, where the 3+1 splitting 
of the potential equation is considered and the potential is constructed. The section is 
then concluded with the estimates for spin-$s$ fields.
Appendix~\ref{sec:algprop} contains some results on the operator $\mathcal G$ used in the 
analysis of the elliptic complex introduced in Section~\ref{sec:Integrability} as 
well as for the construction of the initial data for the Hertz potential in
Section~\ref{Section:ConstructPotentials}.


\section{Preliminaries} \label{sec:prelim}
\subsection{Conventions}\label{sec:conventions}
In this paper, we will only work on Minkowski space time. We will use Cartesian coordinates $(t, x^1, x^2, x^3)$ as well as the corresponding spherical coordinates $(t,r,\theta,\phi)$. The spinor formalism with the conventions of \cite{Penrose:1986fk} is extensively used. For important parts of the paper, 3+1 splittings of spinor expressions are performed. The space spinor formalism as introduced in \cite{Sommers:1980dd} is used for this purpose. In this case, the conventions of \cite{Backdahl:2010ig} are adopted. We will always consider the space spinors on the $\{t=\text{const.}\}$ slices of Minkowski space with normal $\tau_{AA'}=\sqrt{2}\nabla_{AA'}t$. Observe that a \emph{negative} definite metric on these slices is used.

The Minkowski space-time $(\mathbb{R}^4, \eta_{\alpha\beta})$ is endowed with its standard connection $\nabla_a = \nabla_{AA'}$. The time slice $\{t=0\}$ is endowed with the connection $D_a = D_{AB}$ defined by
$$
D_{AB} = \tau_{(A}{}^{A'} \nabla_{B)A'}
$$
where $\tau_{AA'}$ is the timelike vector field defined above. Its relation to the connection of the ambient space-time is given by 
$$
\nabla_{AA'}=\tfrac{1}{\sqrt{2}}\tau_{AA'}\partial_t-\tau^{B}{}_{A'}D_{AB}.
$$

Let $S_k$ denote the vector bundle of symmetric valence $k$ spinors on $\mathbb{R}^3$. Furthermore, let $\mathcal{S}_k$ denote the space of smooth ($C^\infty$) sections of $S_k$.
\begin{definition}
Let $\mathcal{P}^{<\delta}(S_k)$ denote the finite dimensional subspace of $\mathcal{S}_k$ spanned by constant spinors with polynomial coefficients of degree $<\delta$. 
\end{definition}
Observe that with $\delta\leq 0$, $\mathcal{P}^{<\delta}(S_k)$ is just the trivial space $\{0\}$.

\subsection{Analytic framework}

We introduce in this section the analytic framework which is necessary to understand the propagation of the field as well as the geometric constraints. We will use the conventions of Bartnik \cite{Bartnik:1986dq}. Even though Bartnik's paper only gives statements for functions, we can easily extend this to space spinors on Euclidean space.

We recall first the standard norms, coming from the Hermitian space spinor product.
\begin{definition} The Hermitian space spinor product is given by
\begin{equation*}
\langle \zeta_{A\dots F},\phi_{A\dots F}\rangle = \zeta_{A\dots F} \widehat{\phi}^{A\dots F},
\end{equation*}
where  
$
\widehat{\phi}^{A\dots F} = \tau^{AA'}\dots \tau^{FF'} \overline{\phi}_{A'\dots F'}
$ and $\tau_{AA'}=\sqrt{2}\nabla_{AA'}t$.
The pointwise norm of a smooth $\phi_{A\dots F}$ is defined via
$$
\vert\phi_{A\dots F} \vert^2 = \phi_{A\dots F} \widehat{\phi}^{A\dots F}.
$$
The pointwise norm of the derivatives of the smooth spinor $\phi_{A\dots F}$ on $\R^3$ is given by
$$
\vert D_a \phi_{A\dots F}\vert^2 = \delta^{ab} D_{a}\phi_{A\dots F} \widehat{D_b\phi}{}^{A\dots F},$$
where $\delta_{ab}$ is the standard Euclidean metric on $\R^3$. The norm of higher order derivatives is defined similarly.
\end{definition}
\begin{remark}\label{rem:DReal} The identity
$$
D_{AB} \widehat{\phi}_{A\dots F} = - \widehat{D_{AB} \phi}_{A\dots F},
$$
holds, due to the fact that the operator $D_{AB}$ is real.
\end{remark}

\begin{definition}
The $L^2$-norm of a smooth spinor field in $\R^3$ is defined by
$$
\Vert\phi_{A\dots F}\Vert_{2} = \left(\int_{\R^3}\vert\phi_{A\dots F} \vert^2 \ud \mu_{\R^3}\right)^{1/2},
$$
where $\ud \mu_{\R^3}$ is the standard volume form on $\R^3$. 
The $L^2$-norm is also defined for derivatives in the same way using the pointwise definition above.
\end{definition}
If $u$ is a real scalar, its Japanese bracket 
is defined by
$$
\lAngle u\rAngle = (1+u^2)^{1/2}.
$$
We next define the weighted Sobolev norms, which will be used to describe the asymptotic behavior of initial data at space-like infinity.
\begin{definition}[Weighted Sobolev spaces] Let $\delta$ be a real number and $j$ a nonnegative integer.\\
The completion of the space of smooth spinor fields in $\mathcal{S}_{2s}$ with compact support in $\R^3$ endowed with the norm
$$
\Vert \phi_{A\dots F}\Vert^2_{j, \delta} = \sum_{n=0}^j \left\Vert \lAngle r \rAngle^{-(\delta+\frac32) + n} D^n\phi_{A\dots F}\right\Vert^2_{2},
$$
is denoted by $H^j_{\delta}(S_{2s})$.
\end{definition}
For $\delta = - 3/2$, the weighted spaces $H^0_{-3/2}(S_{2s})$ 
are the standard Sobolev spaces $L^2(S_{2s})$.

Many well-known properties can be proved about these spaces -- see for instance \cite[Theorem~1.2]{Bartnik:1986dq} for more details. The only property, crucial to obtain the pointwise estimates, is the following Sobolev embedding (cf. \cite[Theorem~1.2, (iv)]{Bartnik:1986dq}, specialized to dimension 3).
\begin{proposition}\label{prop:decaysob} Let $\delta$ be a real number and $j\geq 2$ an integer. Then, any spinor field in $H^j_\delta(S_{2s})$ is in fact continuous and there exists a constant $C$ such that, for any $\phi_{A\dots F}$ in $H^j_\delta(S_{2s})$
 $$
 \vert\phi_{A\dots F}(x) \vert \leq C \lAngle r \rAngle^\delta \Vert\phi_{A\dots F}\Vert_{2, \delta},
 $$
 and, in fact,
 $$
 \vert\phi_{A\dots F}(x) \vert = o\bigl( r^{\delta}\bigr) \text{ as } r \rightarrow \infty.
  $$
\end{proposition}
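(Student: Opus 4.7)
The plan is to reduce to the scalar statement already contained in Bartnik \cite[Theorem 1.2]{Bartnik:1986dq} and then invoke his result. First I would fix a spin frame adapted to $\tau_{AA'}$, so that a symmetric spinor $\phi_{A\dots F} \in \mathcal{S}_{2s}$ is represented by its $2s+1$ complex independent components $\phi_0, \dots, \phi_{2s}$. The Hermitian inner product associated with $\tau_{AA'}$ is positive definite, so the pointwise norm $|\phi_{A\dots F}|^2$ is a positive definite quadratic form in these components, yielding a two-sided bound $c \sum_{\ii} |\phi_\ii|^2 \leq |\phi_{A\dots F}|^2 \leq C \sum_\ii |\phi_\ii|^2$. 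Moreover, the space-spinor derivative $D_{AB} = \tau_{(A}{}^{A'}\nabla_{B)A'}$ differs from the flat coordinate derivative $\partial_i$ on $\R^3$ only by constant coefficients in this frame, so the weighted Sobolev norm $\|\phi_{A\dots F}\|_{j,\delta}$ is equivalent to $\sum_\ii \|\phi_\ii\|_{j,\delta}$ understood in the scalar sense.

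With this reduction in hand, the statement follows from the scalar weighted Sobolev embedding on $\R^3$: for $j \geq 2 > 3/2$, $H^j_\delta(\R^3)$ embeds continuously into the space of continuous functions with the pointwise bound $|f(x)| \leq C \lAngle r \rAngle^\delta \|f\|_{j,\delta}$. For completeness I would indicate how this is obtained by the standard dyadic-annular argument: decompose $\R^3$ into a central ball together with annuli $A_k = \{2^{k-1} \leq |x| \leq 2^{k+1}\}$, pull back each $A_k$ to a fixed reference annulus $A_0$ via $x = 2^k y$, and apply the ordinary Sobolev embedding $H^j(A_0) \hookrightarrow L^\infty(A_0)$ for $j \geq 2$. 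The factors $\lAngle r \rAngle^{-(\delta+3/2)+n}$ in the definition of the weighted norm are precisely those that make all scaling weights match under $x \mapsto 2^k y$, producing
$$
\sup_{x \in A_k} |f(x)|^2 \leq C\, 2^{2k\delta} \sum_{n=0}^j 2^{2k(n-\delta-3/2)} \|D^n f\|_{L^2(A_k)}^2 \leq C\, 2^{2k\delta} \|f\|_{j,\delta;A_k}^2,
$$
where the last norm is the weighted norm restricted to $A_k$. Since $\lAngle r \rAngle \simeq 2^k$ on $A_k$, this yields the pointwise bound on $A_k$, and summing (or rather taking the supremum) over $k$ yields the global estimate with constant depending only on the bounded overlap of the cover.

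For the $o(r^\delta)$ refinement at infinity, I would observe that the annular decomposition gives $\sum_k \|f\|_{j,\delta;A_k}^2 \leq C \|f\|_{j,\delta}^2 < \infty$, so the tail $\|f\|_{j,\delta;A_k}$ tends to $0$ as $k \to \infty$. Inserting this into the pointwise bound $\sup_{A_k} |f(x)| \leq C\, 2^{k\delta} \|f\|_{j,\delta;A_k}$ converts the $O(r^\delta)$ bound into $o(r^\delta)$ on the annular scale, and since $A_k$ and $A_{k+1}$ overlap, this upgrades to $|f(x)| = o(r^\delta)$ as $r \to \infty$. Reassembling the components $\phi_\ii$ into $\phi_{A\dots F}$ via the frame equivalence gives the proposition. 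The only mildly delicate point is the book-keeping of the frame reduction; there is no real analytic obstacle, since the heavy lifting is Bartnik's scalar theorem which is directly applicable once one has reduced to components.
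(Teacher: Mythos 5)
Your proof is correct and follows the same route as the paper, which simply cites Bartnik's Theorem 1.2(iv) for the scalar case and remarks that the extension to space spinors is immediate; your frame reduction and the dyadic-annulus scaling argument are exactly the standard details behind that citation, including the tail argument for the $o(r^{\delta})$ refinement.
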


We finally recall the following properties of elliptic operators, restricting ourself to powers of the Laplacian. The result stated is a combination of the standard results, see e.g.,   \cite{Bartnik:1986dq,McOwen:1980gz,Cantor:1981bs,Kat80}.
\begin{proposition}\label{prop:elliptictoolbox} Let $j,l$ be non-negative integers such that $j\geq 2l$, $s$ be in $\tfrac{1}{2}\mathbb{N}_{0}$ and $\delta$ be in $\R\setminus\mathbb{Z}$. The formally self-adjoint elliptic operator of order $2l$
$$
\Delta^l_{2s}: H^{j}_{\delta}(S_{2s}) \longrightarrow H^{j-2l}_{\delta-2l}(S_{2s})
$$
is Fredholm and satisfies
\begin{itemize}
\item its kernel is a subspace of $\mathcal{P}^{<\delta}(S_{2s})$; in particular, $\Delta^l_{2s}$ is injective when $\delta<0$;
\item its co-kernel is a subspace of  $\mathcal{P}^{<-3-\delta+2l}(S_{2s})$; in particular, $\Delta^l_{2s}$ is surjective when $\delta>2l-3$.
\end{itemize}
Furthermore, 
there exists a constant $C$ such that, for all spinor fields in $H^{j}_{\delta}(S_{2s})$,
$$
\inf_{\psi_{A\dots F} \in \ker(\Delta_{2s}^l)\cap H^{j}_{\delta}(S_{2s}) } \Vert \phi_{A\dots F} + \psi_{A\dots F} \Vert_{j,\delta} \leq C \Vert\Delta^{l}_{2s}\phi_{A\dots F}\Vert_{j-2s,\delta-2s}.
$$
In fact the infimum is attained, and there exists a $\psi_{A\dots F}$ in $\ker(\Delta_{2s}^l)\cap H^{j}_{\delta}(S_{2s})$ such that $\theta_{A\dots F}=\phi_{A\dots F} + \psi_{A\dots F}$ satisfies
$$
\Vert \theta_{A\dots F} \Vert_{j,\delta} \leq C \Vert\Delta^{l}_{2s}\theta_{A\dots F}\Vert_{j-2s,\delta-2s}.
$$
\end{proposition}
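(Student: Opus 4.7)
The plan is to reduce Proposition \ref{prop:elliptictoolbox} to the well known scalar case by exploiting the fact that the bundle $S_{2s}$ over $\R^3$ is globally trivial and that the Laplacian is, in a constant spinor frame, just the componentwise scalar Laplacian. First, fix a constant symmetric spinor basis $\{E^{(I)}_{A\dots F}\}$ for $S_{2s}$. Since the connection $D_{AB}$ acting on a constant spinor vanishes (constants are covariantly constant), writing $\phi_{A\dots F}=\sum_I \phi^{(I)} E^{(I)}_{A\dots F}$ gives $\Delta^l_{2s}\phi_{A\dots F}=\sum_I (\Delta^l \phi^{(I)})E^{(I)}_{A\dots F}$. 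Moreover the weighted norms $\Vert\phi_{A\dots F}\Vert_{j,\delta}$ are equivalent to $\bigl(\sum_I \Vert\phi^{(I)}\Vert^2_{j,\delta}\bigr)^{1/2}$ with constants depending only on the frame. Thus the mapping, kernel, cokernel, and estimate statements for $\Delta^l_{2s}$ reduce to finitely many copies of the corresponding statements for the scalar operator $\Delta^l: H^j_\delta(\R^3)\to H^{j-2l}_{\delta-2l}(\R^3)$.

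For the scalar case one invokes the standard weighted elliptic theory of Nirenberg-Walker, McOwen, Cantor and Bartnik. The operator $\Delta^l$ is elliptic with constant coefficients on $\R^3$; its indicial roots on $\R^3\setminus\{0\}$ are the integers $\{\dots,-2,-1,0,1,\dots\}\cup\{\dots,-2l-2,-2l-1,\dots\}$, and the Fredholm theorem (see \cite[Theorem 1.10]{Bartnik:1986dq}) asserts that $\Delta^l:H^j_\delta\to H^{j-2l}_{\delta-2l}$ is Fredholm precisely when $\delta$ avoids these exceptional values, in particular for $\delta\in\R\setminus\Z$. The kernel is identified by a Liouville-type argument: any $u\in H^j_\delta\cap\ker\Delta^l$ is polyharmonic of growth $o(r^\delta)$ at infinity (by the Sobolev embedding Proposition \ref{prop:decaysob}), hence a polynomial, and counting degrees forces $u\in\mathcal{P}^{<\delta}$. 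In particular injectivity for $\delta<0$ is immediate.

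The cokernel statement is obtained by duality. The formal $L^2$-adjoint of $\Delta^l:H^j_\delta\to H^{j-2l}_{\delta-2l}$ is $\Delta^l$ itself acting between the appropriate dual weighted spaces; under the $L^2$-pairing on $\R^3$, the dual of $H^{j-2l}_{\delta-2l}$ pairs with $H^{-(j-2l)}_{-3-(\delta-2l)}$ (the convention of \cite{Bartnik:1986dq} is fixed so that the dual weight is $-3-\delta'$). The cokernel of $\Delta^l$ is thus isomorphic, via this pairing, to $\ker\Delta^l\cap H^{\bullet}_{-3-\delta+2l}$, and by the Liouville argument of the previous paragraph this kernel consists of polynomial solutions of degree strictly less than $-3-\delta+2l$, i.e.\ a subspace of $\mathcal{P}^{<-3-\delta+2l}$. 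Surjectivity for $\delta>2l-3$ follows because then the upper bound on the polynomial degree is non-positive.

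Finally, the quotient estimate is a standard consequence of Fredholmness. The map $\Delta^l_{2s}$ descends to an injective continuous map on the quotient $H^j_\delta/\ker\Delta^l_{2s}$, which has closed range; the open mapping theorem applied to the induced bijection onto the range yields the bound
\begin{equation*}
\inf_{\psi\in\ker\Delta^l_{2s}\cap H^j_\delta}\Vert\phi+\psi\Vert_{j,\delta}\le C\Vert\Delta^l_{2s}\phi\Vert_{j-2l,\delta-2l}.
\end{equation*}
Because $\ker\Delta^l_{2s}\cap H^j_\delta$ is finite dimensional (contained in $\mathcal{P}^{<\delta}(S_{2s})$), the infimum is attained, giving the desired $\psi$ and $\theta=\phi+\psi$. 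The only mildly delicate step in carrying this out is bookkeeping the duality conventions so that the weight shift $\delta\mapsto -3-\delta$ in Bartnik's setup matches the stated range $\mathcal{P}^{<-3-\delta+2l}$; everything else is direct reduction to the cited scalar theory.
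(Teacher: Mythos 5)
Your proposal is correct and follows essentially the same route as the paper, which itself only cites the standard weighted elliptic theory (Bartnik, McOwen, Cantor, Kato) and notes in the subsequent remark exactly the duality identification of the cokernel with $\ker\Delta^l_{2s}\cap H^{-j+2l}_{-3-\delta+2l}$ and the closed-range origin of the quotient estimate. Your componentwise reduction via a constant spinor frame, the Liouville argument for the polynomial kernel, and the attainment of the infimum by finite-dimensionality are all consistent with (and usefully more explicit than) what the paper leaves to the references.
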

\begin{remark}
\begin{enumerate}
\item In this paper, the set $\mathbb{N}$ is the set of positive integers and $\mathbb{N}_0$ the set of non-negative integers.
\item In the range of weights $-1\leq \delta\leq  0$, the operator $\Delta$ is bijective.
\item The inequality comes from the closed range property (see \cite[Theorem~5.2]{Kat80}). Since this infimum corresponds to the distance to the kernel $\ker(\Delta_{2s}^l)$, which is closed, the infimum is in fact attained.
\item We recall here that 
the co-kernel of $\Delta^l_{2s}$ in
$H^{j-2l}_{\delta-2l}(S_{2s})$ is $L^2$-orthogonal to the kernel of
$\Delta^{l}_{2s}$ in the dual space $H^{-j+2l}_{-3-\delta+2l}(S_{2s})$.
\item The dimension of the spaces can be computed explicitly -- see for instance \cite{Lockhart:1984kt}. However, we will not make explicit use of this.
\end{enumerate}
\end{remark}

\subsection{Fundamental operators} \label{sec:fundop} 

\begin{definition} \label{def:fundop}
Let 
$\phi_{A_1\dots A_k}\in \mathcal{S}_k$, that is $\phi_{A_1\dots A_k}=\phi_{(A_1\dots A_k)}$.
Let $D_{AB}$ be the intrinsic Levi-Civita connection.  Define the operators
$\sdiv_k: \mathcal{S}_k\rightarrow \mathcal{S}_{k-2}$, 
$\scurl_k: \mathcal{S}_k\rightarrow \mathcal{S}_{k}$ and 
$\stwist_k: \mathcal{S}_k\rightarrow \mathcal{S}_{k+2}$ via
\begin{align*}
(\sdiv_k\phi)_{A_1\dots A_{k-2}}\equiv{}&D^{A_{k-1}A_{k}}\phi_{A_1\dots A_k},\\
(\scurl_k\phi)_{A_1\dots A_{k}}\equiv{}&D_{(A_1}{}^{B}\phi_{A_2\dots A_{k})B},\\
(\stwist_k\phi)_{A_1\dots A_{k+2}}\equiv{}&D_{(A_1 A_2}\phi_{A_3\dots A_{k+2})}.
\end{align*}
These operators will be called \emph{divergence}, \emph{curl} and \emph{twistor operator} respectively.
\end{definition}
We suppress the indices of $\phi$ in the left hand sides. The label $k$ indicates its valence.
The importance of these operators comes from the following irreducible decomposition which is valid for any  $k\geq 1$,
\begin{align*}
D_{A_1 A_2}\phi_{A_3\dots A_{k+2}}={}&(\stwist_k\phi)_{A_1\dots A_{k+2}}
-\tfrac{k}{k+2} \epsilon_{A_1(A_3}(\scurl_k\phi)_{A_4\dots A_{k+2})A_2}\nonumber\\
&-\tfrac{k}{k+2} \epsilon_{A_2(A_3}(\scurl_k\phi)_{A_4\dots A_{k+2})A_1}
+\tfrac{1-k}{1+k} \epsilon_{A_1(A_3}(\sdiv_k\phi)_{A_4\dots A_{k+1}}\epsilon_{A_{k+2})A_2}.
\end{align*}
This irreducible decomposition  follows from \cite[Proposition~3.3.54]{Penrose:1986fk}. Contraction with the spin metric $\epsilon^{AB}$ and partial expansion of the symmetries give the coefficients.

We consider now the symbols of these operators
\begin{equation*}
\sigma(\scurl_k) : T^\star M  \to  L(S_k, S_k),\qquad
\sigma(\sdiv_k) : T^\star M  \to  L(S_k, S_{k-2}),\qquad
\sigma(\stwist_k) : T^\star M  \to  L(S_k, S_{k+2}), 
\end{equation*}
where $L(S_k, S_j)$ is the space of bundle maps from $S_k$ into $S_j$. 
When applied to a 1-form $\xi_{AB}$, one denotes the symbols $\sigma_{\xi}$.
\begin{lemma}\label{symbolcurlreal}
When applied to a 1-form $\xi_{AB}$, the symbol $\sigma_{\xi}(\scurl_k) : S_k \rightarrow S_k$ of $\scurl_k$ is Hermitian and has only real eigenvalues. 
\end{lemma}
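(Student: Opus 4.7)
The plan is to deduce the Hermiticity of $\sigma_{\xi}(\scurl_k)$ from the formal self-adjointness of $\scurl_k$ with respect to the Hermitian $L^2$ pairing induced by $\langle\cdot,\cdot\rangle$, and then to invoke the finite-dimensional spectral theorem to conclude that its eigenvalues are real.

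The central computation is the identity
\[
\int_{\R^3}\langle\scurl_k\phi,\psi\rangle\,\ud\mu_{\R^3}
 = \int_{\R^3}\langle\phi,\scurl_k\psi\rangle\,\ud\mu_{\R^3}
\]
for all $\phi,\psi\in\mathcal{S}_k$ with compact support. Starting from $(\scurl_k\phi)_{A_1\dots A_k}=D_{(A_1}{}^{B}\phi_{A_2\dots A_k)B}$, the total symmetry of $\widehat{\psi}^{A_1\dots A_k}$ lets us drop the outer symmetrization. An integration by parts transfers $D_{A_1}{}^{B}$ onto $\widehat{\psi}^{A_1\dots A_k}$ at the cost of a sign; after raising the $B$ index through $\epsilon^{AB}$, Remark~\ref{rem:DReal} replaces $D_{A_1}{}^{B}\widehat{\psi}$ by $-\widehat{D_{A_1}{}^{B}\psi}$, contributing a second sign that cancels the first. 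Re-symmetrizing the free indices now attached to $\psi$, which is permissible because $\phi$ is symmetric, reproduces the right-hand side.

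From the formal self-adjointness of the first-order operator $\scurl_k$ between the same Hermitian bundle $S_k$, the general symbol-calculus argument -- testing against rapidly oscillating data $e^{i\lambda f}u$ with $\ud f|_p=\xi$ and extracting the leading-order term in $\lambda$ -- yields that the principal symbol $\sigma_{\xi}(\scurl_k)(\phi)_{A_1\dots A_k}=\xi_{(A_1}{}^{B}\phi_{A_2\dots A_k)B}$ is a Hermitian endomorphism of the fibre $S_k$ for every real $\xi_{AB}$. The reality of the eigenvalues then follows from the spectral theorem for Hermitian operators on a finite-dimensional complex inner-product space.

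The main obstacle is the spinor-index bookkeeping in the self-adjointness step, in particular checking that the two sign changes above really do cancel and that the hat operation commutes with the raising of $B$ through $\epsilon^{AB}$. An equivalent and possibly more transparent route, which I would actually carry out, is to verify the Hermiticity of $\sigma_{\xi}(\scurl_k)$ pointwise by directly expanding $\langle\sigma_{\xi}\phi,\psi\rangle=\langle\phi,\sigma_{\xi}\psi\rangle$ using the definition of $\widehat{\,\cdot\,}$ in terms of the $\tau_{AA'}$ contractions, and matching the two sides via the symmetries of $\phi$, $\widehat{\psi}$, and the reality condition on $\xi_{AB}$.
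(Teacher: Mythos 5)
Your fallback route --- verifying $\langle\sigma_\xi(\scurl_k)\phi,\psi\rangle=\langle\phi,\sigma_\xi(\scurl_k)\psi\rangle$ pointwise from the definition of $\widehat{\,\cdot\,}$ and the reality condition $\widehat\xi_{AB}=-\xi_{AB}$ --- is exactly the paper's proof, and it is the one you should carry out. The route you feature first, however, breaks down at precisely the step you flag as the ``main obstacle'', and in fact its starting identity is provably false. If the lemma is true, i.e.\ if the constant coefficient matrices of the first-order operator $\scurl_k$ are Hermitian with respect to the fibre product, then $\scurl_k$ is formally \emph{skew}-adjoint for the $L^2$ pairing ($(A^j\partial_j)^*=-(A^j)^*\partial_j=-A^j\partial_j$ for Hermitian constant $A^j$), not self-adjoint. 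Your two-sign bookkeeping (one minus from integration by parts, one from Remark~\ref{rem:DReal}) misses a third sign coming from the behaviour of the hat operation under contraction of the index $B$ (each contracted index pair inside a hat contributes a factor $-1$), so the signs do not cancel. The skew-adjointness is also what one expects from the Maxwell picture: on the anti-self-dual field $\FF=E+iB$ the evolution operator is $-i\,\nabla\times{}$, i.e.\ $i$ times the self-adjoint tensorial curl; the explicit factors of $i$ the paper inserts when translating $\scurl_2$ into $*\ud$ (e.g.\ $\EE_a=-i*\ud\LL_a$) reflect the same discrepancy.

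Even granting your self-adjointness claim, the ``general symbol-calculus argument'' would deliver the opposite conclusion for the symbol as defined here. The paper's symbol is the bare substitution $D_{AB}\mapsto\xi_{AB}$ with no factor of $i$; testing against $e^{i\lambda f}u$ produces $i\lambda\,\sigma_{df}$ at leading order, and for a formally self-adjoint operator the resulting identity $i\lambda\langle\sigma u,v\rangle=\overline{i\lambda}\langle u,\sigma v\rangle$ forces $\sigma$ to be skew-Hermitian. The ordinary curl makes this concrete: it is formally self-adjoint, yet its substitution symbol $\xi\times{}\cdot$ has eigenvalues $0,\pm i|\xi|$. It is the skew-adjointness of $\scurl_k$, combined with the convention $\widehat\xi_{AB}=-\xi_{AB}$ for real covectors, that makes $\sigma_\xi(\scurl_k)$ Hermitian. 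So either repair both halves of route one consistently, or simply perform the direct pointwise computation of your second route, which closes the argument without any integration by parts.
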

\begin{proof}
By definition we have
\begin{equation*}
(\sigma_\xi(\scurl_k)\phi)_{A_1\dots A_{k}}\equiv \xi_{(A_1}{}^{B}\phi_{A_2\dots A_{k})B},
\end{equation*}
where $\xi_{AB}$, is real, \emph{i.e.} $\widehat \xi_{AB}=-\xi_{AB}$. For arbitrary $\eta_{A_1\dots A_{k}}$ and $\zeta_{A_1\dots A_{k}}$, we have
\begin{align*}
\langle (\sigma_\xi(\scurl_k)\eta)_{A_1\dots A_{k}},
\zeta_{A_1\dots A_k}\rangle
={}& \xi_{A_1}{}^{B}\eta_{A_2\dots A_{k}B}\hat\zeta^{A_1\dots A_k}
= \eta_{A_2\dots A_{k}B}\widehat \xi^{C_1B}\hat\zeta_{C_1}{}^{A_2\dots A_k}\nonumber\\
={}&\langle \eta_{A_1\dots A_k}, (\sigma_\xi(\scurl_k)\zeta)_{A_1\dots A_{k}}\rangle.
\end{align*}
Hence, the symbol $\sigma_\xi(\scurl_k)$ is for each point Hermitian and, by the spectral theorem, has only real eigenvalues.
\end{proof}

The operators $\sdiv_k$, $\scurl_k$ and $\stwist_k$ are special cases of Stein-Weiss operators. We refer to \cite{Branson1997334} and references therein for general properties of this class of operators. 

We will now consider some operators of general order, 
that will play an important role in this paper.
The most important second order operator is clearly the Laplacian $\Delta\equiv D_{AB}D^{AB}$. Note that here we are using a \emph{negative} definite metric on $\mathbb{R}^3$. When the Laplacian acts on a spinor field $\phi_{A\dots F}$ in $\mathcal{S}_k$, we will often use the notation $(\Delta_k\phi)_{A\dots F}$, where $k$ indicates the valence of $\phi_{A\dots F}$.
\begin{definition}\label{def:mathcalg}
Define the order $k-1$ operators $\mathcal{G}_k:\mathcal{S}_k\rightarrow \mathcal{S}_{k}$ as
\begin{align*}
(\mathcal{G}_k\phi)_{A_1\dots A_k} &\equiv \sum_{n=0}^{\mathclap{\left\lfloor\tfrac{k-1}{2}\right\rfloor}}\binom{k}{2n+1}(-2)^{-n} \underbrace{D_{(A_1}{}^{B_1}\cdots D_{A_{k-2n-1}}{}^{B_{k-2n-1}}}_{k-2n-1} (\Delta^n_{k}\phi)_{A_{k-2n}\dots A_k)B_1\dots B_{k-2n-1}}.
\end{align*}
\end{definition}
The first operators are
\begin{align*}
(\mathcal{G}_1\phi)_{A} &\equiv \phi_{A},\\
(\mathcal{G}_2\phi)_{AB} &\equiv 2 D_{(A}{}^{C}\phi_{B)C}
=2(\scurl_2\phi)_{AB},\\
(\mathcal{G}_3\phi)_{ABC} &\equiv 3 D_{(A}{}^{D}D_{B}{}^{F}\phi_{C)DF}
-  \tfrac{1}{2} (\Delta_{3}\phi)_{ABC} \nonumber\\
&=\tfrac{1}{3}(\stwist_1\sdiv_3\phi)_{ABCD}+4(\scurl_3\scurl_3\phi)_{ABC},\\
(\mathcal{G}_4\phi)_{ABCD} &\equiv4 D_{(A}{}^{F}D_{B}{}^{H}D_{C}{}^{L}\phi_{D)FHL}
- 2 D_{(A}{}^{F}(\Delta_{4}\phi)_{BCD)F}\nonumber\\ &=2(\stwist_2\sdiv_4\scurl_4\phi)_{ABCD}+8(\scurl_4\scurl_4\scurl_4\phi)_{ABCD}.
\end{align*}
These operators  appear naturally in Proposition~\ref{prop:splittingpotential} below. The
most important properties of these operators are 
\begin{equation}\label{eq:divGproperty}
\sdiv_k\mathcal{G}_k=0\quad \text{ and }\quad \mathcal{G}_k\stwist_{k-2}=0,
\end{equation}
which is valid for any $k\geq  2$. The main
idea to prove this is to use that $\sdiv_k\mathcal{G}_k$ and
$\mathcal{G}_k\stwist_{k-2}$ contains derivatives of the kind
$D_{A}{}^{C}D_{BC}=\frac{1}{2}\epsilon_{AB}\Delta$. For a complete proof see
Proposition~\ref{divGproperty}. The operators $\mathcal{G}_k$ also commute
with $\scurl_k$; this is proven in Proposition~\ref{Gcurlexpansion}. 
To connect with the standard elliptic theory, we express appropriate powers of the Laplacian in terms of the operators $\mathcal{G}_k$ as
\begin{subequations}
\begin{align}
(\Delta^{k}_{2k}\phi)_{A_1\dots A_{2k}}={}&(\stwist_{2k-2}\mathcal{F}_{2k-2}\sdiv_{2k}\phi)_{A_1\dots A_{2k}}-(-2)^{1-k}(\mathcal{G}_{2k}\scurl_{2k}\phi)_{A_1\dots A_{2k}},\label{LaplacianAsGeven}\\
 (\Delta^{k}_{2k+1}\phi)_{A_{1}\dots A_{2k+1}}={}&
(\stwist_{2k-1}\mathcal{F}_{2k-1}\sdiv_{2k+1}\phi)_{A_1\dots A_{2k+1}}
+(-2)^{-k}(\mathcal{G}_{2k+1}\phi)_{A_1\dots A_{2k+1}} \label{LaplacianAsGodd},
\end{align}
\end{subequations}
where the operators $\mathcal{F}_{2s}$ for $s\in \tfrac{1}{2}\mathbb{N}_0$ are defined via
\begin{align*}
(\mathcal{F}_{2s}\phi)_{A_1\dots A_{2s}}={}&
2^{-2s}\sum_{n=0}^{\lfloor s\rfloor}\sum_{m=0}^{\lfloor s\rfloor-n}\binom{2s+2}{2n+2m+2}(-2)^{n}\nonumber\\
&\quad\times \underbrace{D_{(A_1}{}^{B_1}\cdots D_{A_{2n}}{}^{B_{2n}}}_{2n} (\Delta^{\lfloor s\rfloor-n}_{2s}\phi)_{A_{2n+1}\dots A_{2s})B_1\dots B_{2n}}.
\end{align*}

The first operators are
\begin{align*}
(\mathcal{F}_0\phi)={}&\phi,\nonumber\\
(\mathcal{F}_1\phi)_{A}\equiv{}& \tfrac{3}{2}\phi_A,\\
(\mathcal{F}_2\phi)_{AB}={}& \tfrac{7}{4} (\Delta_{2} \phi)_{AB}
-\tfrac{1}{2}D_{(A}{}^{C}D_{B)}{}^{D}\phi_{CD}.
\end{align*}
See Lemma~\ref{lapspinor} in the appendix for the proof of \eqref{LaplacianAsGeven} and \eqref{LaplacianAsGodd}.

The operator $\scurl_2$ is the spinor equivalent to the operator $\ast \ud$ acting on 1-forms. The tensor equivalent of the operator $\mathcal{G}_4$ is the linearized Cotton-York tensor acting on symmetric trace-free 2-tensors. In the following section, a more detailed description of these relations is given.

\section{Integrability properties of spinor fields}\label{sec:Integrability}
A crucial part of this work relies on integrability properties for
spinors, that is to say proving that a spinor belongs to the image of a certain differential operator. In the case of spin-1, the operator under consideration is the curl operator; its integrability properties are well known
since it is described by the de Rham complex. For
spin-2, one has to resort to a generalization of the de Rham theory to trace free 2-tensors, which happened to have been studied in the context of
conformal deformations of the flat metric by Gasqui and Goldschmidt
\cite{Gasqui:1984vu}, whose 
results were 
extended by Beig \cite{Beig:1997wo}. 
On $\R^3$, we consider a generalization
of these elliptic complexes for arbitrary spin.

We present here the general picture of this integrability result for smooth
spinors. It is well known that for 1-forms the integrability conditions are
given by the following elliptic complex
\begin{equation*}
 C^\infty(\R^3,\R)\stackrel{\ud}{\longrightarrow} \Lambda^1\stackrel{\star \ud}{\longrightarrow} \Lambda^1  \stackrel{\coder}{\longrightarrow}C^\infty(\R^3,\R),
\end{equation*}
whose spinorial equivalent is
\begin{equation}\label{exactseq1}
 \mathcal{S}_{0}\stackrel{\stwist_0}{\longrightarrow} \mathcal{S}_{2} \stackrel{\scurl_2}{\longrightarrow} \mathcal{S}_{2}\stackrel{\sdiv_2}{\longrightarrow} \mathcal{S}_{0}.
\end{equation}
Gasqui and Goldschmidt were interested in the conformal deformation of a metric on a 3-manifold $M$. A deformation $g_t$  of a metric $g_0$ is said to be conformally rigid if there exist a family of diffeomorphisms $\phi_t^\star$ and of functions $u_t$ such that
$$
\phi^\star_t g_0 = e^{u_t}g_t.
$$
The infinitesimal equation corresponding to this deformation is given by the conformal Killing equation
\begin{equation}\label{confkill}
\mathcal{L}_X g_0-\frac13\text{Tr}_{g_0}(\mathcal{L}_X g_0)g_0=h
\end{equation}
where $X$ is a vector field on $M$ and $h$ is a trace free 2-tensor. The spinor equivalent of this equation is given by
$$
2D_{(AB}X_{CD)} = h_{ABCD}.
$$ 
Solving \eqref{confkill} requires that the 2-tensor $h$ satisfies the constraint equation. This is stated in \cite[Theorem 6.1, (2.24)]{Gasqui:1984vu} and in \cite{Beig:1997wo}.
\begin{theorem}[Gasqui-Goldschmidt]\label{thm:integrability2}
If $(M,g)$ is a conformally flat 3-dimensional manifold, then the following is an elliptic complex
$$
\Lambda^1(M) \stackrel{L}{\longrightarrow}  S_0^2(M, g) \stackrel{\mathcal{R}}{\longrightarrow} S_0^2(M, g) \stackrel{\diverg} {\longrightarrow}\Lambda^1(M), 
$$ 
where $\Lambda^1(M)$ is the space of 1-forms over $M$, $S_0^2(M, g)$ is the space of symmetric trace free 2-tensors and
\begin{eqnarray*}
(LW)_{ab}&=&D_{(a}W_{b)}-\frac{1}{3}g_{ab}D^cW_c\\
(\diverg t)_a&=&2g^{bc}D_ct_{ab}
\end{eqnarray*}
and \begin{equation*}
\begin{array}{c}
\mathcal{R}(\psi)_{ab} = \eps^{cd}{}_a D_{[c}\sigma_{d]b} \text{ where}\\
\sigma_{ab}=D_{(a} D^c \psi_{b)c} - \frac{1}{2} \Delta \psi_{ab} - \frac{1}{4} g_{ab}D^c D^d \psi_{cd}.
\end{array}
\end{equation*}
\end{theorem}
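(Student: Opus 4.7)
The plan is to work in the space-spinor formalism of Section~\ref{sec:fundop}, in which a symmetric trace-free $2$-tensor on $\mathbb{R}^3$ corresponds to a symmetric valence-$4$ spinor field in $\mathcal{S}_4$ and a $1$-form corresponds to an element of $\mathcal{S}_2$. Under this dictionary the conformal Killing operator $L$ is (up to a multiplicative constant) the twistor operator $\stwist_2:\mathcal{S}_2\to\mathcal{S}_4$, the divergence $\diverg$ is $\sdiv_4:\mathcal{S}_4\to\mathcal{S}_2$, and the linearised Cotton--York operator $\mathcal{R}$ is the third-order operator $\mathcal{G}_4:\mathcal{S}_4\to\mathcal{S}_4$ of Definition~\ref{def:mathcalg}, as pointed out in the text following~\eqref{LaplacianAsGodd}. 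To establish this precisely I would translate each operator index by index using $g_{ab}\leftrightarrow -\epsilon_{AB}\epsilon_{A'B'}$. The first-order identifications $L\leftrightarrow\stwist_2$ and $\diverg\leftrightarrow\sdiv_4$ are immediate from the definitions. For $\mathcal{R}$, one expresses the auxiliary tensor $\sigma_{ab}$ in spinor components, discards the pure-trace $g_{ab}$ piece which drops out upon the subsequent antisymmetrisation, expands the outer curl $\eps^{cd}{}_aD_{[c}\sigma_{d]b}$ into a third-order operator on $\mathcal{S}_4$, and matches it with the explicit expression for $\mathcal{G}_4$ displayed after Definition~\ref{def:mathcalg}. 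This fixes the proportionality constant.

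Once the dictionary is in place, the complex property reduces to $\mathcal{G}_4\circ\stwist_2=0$ and $\sdiv_4\circ\mathcal{G}_4=0$, which are the special cases $k=4$ of the general identities~\eqref{eq:divGproperty}, proved as Proposition~\ref{divGproperty} in Appendix~\ref{sec:algprop} using $D_{A}{}^{C}D_{BC}=\tfrac12\epsilon_{AB}\Delta$. Thus $\mathcal{R}\circ L=0$ and $\diverg\circ\mathcal{R}=0$ hold without further work.

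The core of the theorem is ellipticity: exactness of the principal symbol sequence at every non-zero real covector $\xi_{AB}\in S_2$. A direct computation from the definitions yields
\begin{align*}
\sigma_\xi(\stwist_2)\phi_{ABCD}&=\xi_{(AB}\phi_{CD)},\\
\sigma_\xi(\sdiv_4)\psi_{AB}&=\xi^{CD}\psi_{ABCD},\\
\sigma_\xi(\mathcal{G}_4)\psi_{ABCD}&=4\,\xi_{(A}{}^{F}\xi_{B}{}^{H}\xi_{C}{}^{L}\psi_{D)FHL}-2(\xi^{EG}\xi_{EG})\,\xi_{(A}{}^{F}\psi_{BCD)F}.
\end{align*}
Since $\xi_{AB}$ is Hermitian (Lemma~\ref{symbolcurlreal}) and $\xi^{AB}\xi_{AB}\neq 0$ for a non-zero real space covector, I would choose a spin dyad aligned with $\xi$ so that $\xi_{AB}\propto o_{(A}\iota_{B)}$ and expand all tensors into the induced eigencomponents; in this basis the three symbols become block-diagonal and the fibrewise computation of kernels and images becomes explicit. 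The main obstacle is then to verify the two exactness conditions $\ker\sigma_\xi(\mathcal{G}_4)=\operatorname{image}\sigma_\xi(\stwist_2)$ and $\ker\sigma_\xi(\sdiv_4)=\operatorname{image}\sigma_\xi(\mathcal{G}_4)$ by matching these block forms. Because $\mathcal{G}_4$ is third-order, its symbol is a cubic polynomial in $\xi$, and showing that its restriction to the divergence-free complement of $\operatorname{image}\sigma_\xi(\stwist_2)$ is an isomorphism is where the genuine analytic content of the theorem resides.
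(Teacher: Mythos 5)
Your reduction to the spinor complex $\mathcal{S}_2\stackrel{\stwist_2}{\to}\mathcal{S}_4\stackrel{\mathcal{G}_4}{\to}\mathcal{S}_4\stackrel{\sdiv_4}{\to}\mathcal{S}_2$ and your use of \eqref{eq:divGproperty} for the complex property $\mathcal{R}\circ L=0$, $\diverg\circ\mathcal{R}=0$ match what the paper does (the paper treats this statement as cited from Gasqui--Goldschmidt, but its Lemma~\ref{lem:ellipticcomplex} proves exactly the spinorial version for all spins, with $s=2$ giving this theorem). The problem is your treatment of ellipticity: you correctly identify that exactness of the symbol sequence at the two middle slots is ``where the genuine analytic content of the theorem resides,'' but you then only describe a plan --- align a dyad with $\xi_{AB}$, block-diagonalize, and ``match these block forms'' --- without carrying out the computation. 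Showing that $\sigma_\xi(\mathcal{G}_4)$, a cubic in $\xi$, maps onto $\ker\sigma_\xi(\sdiv_4)$ and has kernel exactly $\operatorname{image}\sigma_\xi(\stwist_2)$ is precisely the nontrivial step, and as written your argument does not establish it. That is a genuine gap, not a stylistic omission.

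The paper closes this gap without any fibrewise case analysis, by the algebraic identity \eqref{LaplacianAsGeven} specialized to $k=2$,
\begin{equation*}
(\Delta^{2}_{4}\phi)_{ABCD}=(\stwist_{2}\mathcal{F}_{2}\sdiv_{4}\phi)_{ABCD}+\tfrac12(\mathcal{G}_{4}\scurl_{4}\phi)_{ABCD},
\end{equation*}
together with the facts that $\sigma_\xi(\Delta^2_4)=|\xi_{AB}|^{4}I$ is invertible and central, and that $\mathcal{G}_4$ and $\scurl_4$ commute (Lemma~\ref{Gcurlexpansion}). Given $Y\in\ker\sigma_\xi(\sdiv_4)$ one reads off $Y=\tfrac12\,\sigma_\xi(\mathcal{G}_4)\bigl[\sigma_\xi(\Delta^2_4)^{-1}\sigma_\xi(\scurl_4)Y\bigr]$, and given $Y\in\ker\sigma_\xi(\mathcal{G}_4)$ one reads off $Y=\sigma_\xi(\stwist_2)\bigl[\sigma_\xi(\Delta^2_4)^{-1}\sigma_\xi(\mathcal{F}_2)\sigma_\xi(\sdiv_4)Y\bigr]$. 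You should either import this factorization (which is available to you in the paper and makes the exactness a two-line argument) or actually perform the dyad computation you sketch; as it stands the proposal asserts the conclusion at the one place where it needs to be proved. A minor further point: you quote $D_A{}^CD_{BC}=\tfrac12\epsilon_{AB}\Delta$, whereas the appendix uses $D_A{}^CD_{BC}=-\tfrac12\epsilon_{AB}\Delta$; the sign matters if you do carry out the explicit symbol computation.
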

\begin{remark}
\begin{enumerate}
\item A consequence of Theorem \ref{thm:integrability2} is that equation \eqref{confkill} is integrable provided that 
$$
\mathcal{R}(h)_{ab}=0.
$$
\item In terms of spinors, the operator $\mathcal{R}_{ab}$ reads
$$
\mathcal{R}_{ab} = \mathcal{R}_{ABCD} = -\frac{i}{2\sqrt{2}} (\mathcal{G}_4)_{ABCD}.
$$
\end{enumerate}
\end{remark}

The spinorial equivalent of this sequence is the following elliptic complex
\begin{equation}\label{exactseq2}
\mathcal{S}_2 \stackrel{\stwist_2}{\longrightarrow}  \mathcal{S}_4 \stackrel{\mathcal{G}_4}{\longrightarrow} \mathcal{S}_4 \stackrel{\sdiv_4} {\longrightarrow}\mathcal{S}_2.
\end{equation}

We now state, using the fundamental operators $\stwist_{2s-2}$, $\mathcal{G}_{2s}$ and $\sdiv_{2s}$, a generalization of the elliptic complexes \eqref{exactseq1} and \eqref{exactseq2} for arbitrary spin.
\begin{lemma}\label{lem:ellipticcomplex}
The sequence
\begin{equation*} 
\mathcal{S}_{2s-2}\stackrel{\stwist_{2s-2}}{\longrightarrow} \mathcal{S}_{2s} \stackrel{\mathcal{G}_{2s}}{\longrightarrow} \mathcal{S}_{2s}\stackrel{\sdiv_{2s}}{\longrightarrow} \mathcal{S}_{2s-2},
\end{equation*}
is an elliptic complex.
\end{lemma}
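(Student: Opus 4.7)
The plan is to separate the two parts of the claim: the sequence being a complex, and its ellipticity. The complex property is immediate from \eqref{eq:divGproperty}, which gives $\mathcal{G}_{2s}\stwist_{2s-2}=0$ and $\sdiv_{2s}\mathcal{G}_{2s}=0$, both established in Proposition~\ref{divGproperty}. So the real content is ellipticity, which requires exactness of the induced symbol sequence at every nonzero cotangent covector $\xi_{AB}\in T^\star\R^3$, at both middle copies of $\mathcal{S}_{2s}$.

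The key device is to use the identities \eqref{LaplacianAsGeven} and \eqref{LaplacianAsGodd}, which factor the appropriate power of the Laplacian through the operators of the complex (plus a $\scurl$). At principal-symbol level $\sigma_\xi(\Delta_{2s}^s)=|\xi|^{2s}\,\text{id}$ where $|\xi|^2=\xi_{AB}\xi^{AB}$, a nonzero scalar for $\xi\neq 0$. For even $2s=2k$ this reads at the symbol level as
$$|\xi|^{2k}\phi=\sigma_\xi(\stwist_{2k-2})\sigma_\xi(\mathcal{F}_{2k-2})\sigma_\xi(\sdiv_{2k})\phi-(-2)^{1-k}\sigma_\xi(\mathcal{G}_{2k})\sigma_\xi(\scurl_{2k})\phi,$$
and there is an analogous (even cleaner) decomposition for odd $2s=2k+1$ in which $\mathcal{G}$ appears without a $\scurl$ factor.

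From such a decomposition the two required exactness statements drop out. If $\sigma_\xi(\mathcal{G}_{2s})\phi=0$, the commutativity of $\mathcal{G}$ and $\scurl$ (Proposition~\ref{Gcurlexpansion}) lets me rewrite the $\mathcal{G}\scurl$ term as $\scurl\,\mathcal{G}$ applied to $\phi$, which then vanishes; inverting $|\xi|^{2k}$ exhibits $\phi$ explicitly as $\sigma_\xi(\stwist_{2s-2})$ of some element, giving $\ker\sigma_\xi(\mathcal{G}_{2s})\subset\mathrm{im}\,\sigma_\xi(\stwist_{2s-2})$. If instead $\sigma_\xi(\sdiv_{2s})\phi=0$, the $\stwist\mathcal{F}\sdiv$ term vanishes and a second inversion of $|\xi|^{2k}$ writes $\phi=\sigma_\xi(\mathcal{G}_{2s})\psi$ for an explicit $\psi$, giving $\ker\sigma_\xi(\sdiv_{2s})\subset\mathrm{im}\,\sigma_\xi(\mathcal{G}_{2s})$. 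The reverse inclusions are the complex property from the first step. The main obstacle I anticipate is purely bookkeeping: the identities \eqref{LaplacianAsGeven}--\eqref{LaplacianAsGodd} (and the auxiliary operators $\mathcal{F}_{2s}$) must be in hand, and the even and odd cases need to be treated separately; beyond that the argument is formal linear algebra on symbols.
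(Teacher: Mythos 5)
Your proposal is correct and follows essentially the same route as the paper: the complex property from \eqref{eq:divGproperty}, then symbol exactness at both middle terms by inverting the (central, invertible) symbol of $\Delta^{\lfloor s\rfloor}_{2s}$ in the factorizations \eqref{LaplacianAsGeven}--\eqref{LaplacianAsGodd}, using the commutation of $\mathcal{G}_{2s}$ with $\scurl_{2s}$ exactly where you invoke it. The paper likewise treats the integer and half-integer spin cases separately, with the half-integer case being the simpler one since $\mathcal{G}_{2s}$ appears without a $\scurl$ factor.
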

\begin{proof} 
In view of \eqref{eq:divGproperty},
 the sequence is a differential complex. It is therefore enough to 
check that the symbol sequence is exact, \emph{i.e.} for a non-zero $\xi$  in $T^\star M$ 
$$
S_{2s-2} \stackrel{\sigma_\xi(\stwist_{2s-2})}{\longrightarrow} S_{2s} \stackrel{\sigma_\xi(\mathcal{G}_{2s})}{\longrightarrow} 
S_{2s}\stackrel{\sigma_\xi(\sdiv_{2s})}{\longrightarrow} S_{2s-2}.
$$
This follows from the vanishing properties \eqref{eq:divGproperty}
and the expression of powers of the Laplacian in terms of these operators, \emph{i.e.} \eqref{LaplacianAsGeven} and \eqref{LaplacianAsGodd}.  

Let $\xi$  be a fixed non-zero element of $T^\star M $. We first notice that 
\eqref{eq:divGproperty} implies
$$
\text{im}(\sigma_\xi(\mathcal{G}_{2s})) \subset \ker (\sigma_\xi(\sdiv_{2s}))
\quad \text{ and } \quad
\text{im}(\sigma_\xi(\stwist_{{2s}-2}))\subset \ker(\sigma_\xi(\mathcal{G}_{2s})).
$$
We then notice that the symbol of the Laplacian $\Delta^k_{2s}$ is an invertible symbol which is in the center of the algebra of symbols since its expression is
$$
\sigma_\xi(\Delta_{2s}^k) = |\xi_{AB}|^{2k} I.
$$

Furthermore, using the relations stated in Lemma~\ref{lapspinor}, we have\begin{align}
(\Delta^{s}_{2s}\phi)_{A_1\dots A_{2s}}&=(\stwist_{2s-2}\mathcal{F}_{2s-2}\sdiv_{2s}\phi)_{A_1\dots A_{2s}}-(-2)^{1-s}(\mathcal{G}_{2s}\scurl_{2s}\phi)_{A_1\dots A_{2s}}\text{ for } s\in \mathbb{N}_0,\label{lapeven}\\
 (\Delta^{s-1/2}_{2s}\phi)_{A_{1}\dots A_{2s+1}}&=
(\stwist_{2s-2}\mathcal{F}_{2s-2}\sdiv_{2s}\phi)_{A_1\dots A_{2s}}
+(-2)^{-1/2-s}(\mathcal{G}_{2s}\phi)_{A_1\dots A_{2s}} \text{ for } s\in \tfrac12+ \mathbb{N}_0\nonumber.
\end{align}

Assume now that the spin is an integer. The proof in the case when the spin is a half integer is left to the reader (the proof is almost identical). 
Let $Y$ be an element of $\ker(\sigma_\xi(\sdiv_{2s}))$. Using formula~\eqref{lapeven}, we get
\begin{eqnarray*}
Y &=& \sigma_\xi(\Delta_{2s}^s)^{-1}\sigma_\xi(\Delta_{2s}^s) Y\\
&=&\sigma_\xi(\Delta_{2s}^s)^{-1}\left(\sigma_\xi(\stwist_{2s-2})\sigma_\xi(\mathcal{F}_{2s-2})\sigma_\xi(\sdiv_{2s})-(-2)^{1-s}\sigma_\xi(\mathcal{G}_{2s})\sigma(\scurl_{2s})\right)Y\\
&=&-(-2)^{1-s}\sigma_\xi(\Delta_{2s}^s)^{-1}\sigma_\xi(\mathcal{G}_{2s})\sigma_\xi(\scurl_{2s})Y.
 \end{eqnarray*}
Since the symbol of the Laplacian commutes with all other symbols, we consequently get
$$
Y = -(-2)^{1-s}\sigma_\xi(\mathcal{G}_{2s})\sigma_\xi(\Delta_{2s}^s)^{-1}\sigma_\xi(\scurl_{2s}) Y,
$$
that is to say that $Y$ belongs to the image of $\sigma_\xi(\mathcal{G}_{2s})$.
If we now assume that $Y$ is in $\ker(\sigma_\xi(\mathcal{G}_{2s}))$. Using formula~\eqref{lapeven}, we get
\begin{eqnarray*}
Y &=& \sigma_\xi(\Delta_{2s}^s)^{-1}\sigma_\xi(\Delta_{2s}^s) Y\\
&=&\sigma_\xi(\Delta_{2s}^s)^{-1}\left(\sigma_\xi(\stwist_{2s-2})\sigma_\xi(\mathcal{F}_{2s-2})\sigma_\xi(\sdiv_{2s})-(-2)^{1-s}\sigma_\xi(\mathcal{G}_{2s})\sigma_\xi(\scurl_{2s})\right)Y.
 \end{eqnarray*}
Since $\mathcal{G}_{2s}$ and $\scurl_{2s}$ commute (Lemma~\ref{Gcurlexpansion}) and since the symbol of the Laplacian commutes with all other symbols, we get
$$
Y=\sigma_\xi(\stwist_{2s-2})\sigma_\xi(\Delta_{2s}^s)^{-1}\sigma_\xi(\mathcal{F}_{2s-2})\sigma_\xi(\sdiv_{2s}),
$$
that is to say that $Y$ belongs to the image of $\sigma_\xi(\stwist_{2s-2})$.
\end{proof}

Using the ellipticity of the sequence, it is finally possible to prove the existence of solutions of equations involving $\mathcal{G}_{2s}$ and $\stwist_{2s}$. This theorem is a direct consequence of \cite[Theorem~1.4]{Anonymous:1996ce}.
\begin{proposition}\label{prop:integrability3} For $x$ in $\R^3$, there exists an open neighborhood $U$ of $x$ such that the sequence
\begin{equation*}
\mathfrak{A}(U,S_{2s-2})\stackrel{\stwist_{2s-2}}{\longrightarrow} \mathfrak{A}(U,S_{2s}) \stackrel{\mathcal{G}_{2s}}{\longrightarrow} \mathfrak{A}(U,S_{2s})\stackrel{\sdiv_{2s}}{\longrightarrow} \mathfrak{A}(U,S_{2s-2}),
\end{equation*}
is exact, where $\mathfrak{A}(U,E)$ denotes the space of real analytic sections of $E$.
\end{proposition}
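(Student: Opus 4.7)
The plan is to reduce the statement to the general local exactness theorem for elliptic complexes with real analytic coefficients, as stated in [Theorem~1.4]{Anonymous:1996ce}. Three ingredients are needed: (i) the sequence is a differential complex, (ii) it is elliptic, and (iii) the coefficients of the operators are real analytic. Once these are in hand, the cited theorem produces, at each point $x\in\R^3$, an open neighborhood $U$ on which the complex is exact on the level of real analytic sections.

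First I would verify that we genuinely have a complex. The identities $\sdiv_{2s}\mathcal{G}_{2s}=0$ and $\mathcal{G}_{2s}\stwist_{2s-2}=0$, stated in~\eqref{eq:divGproperty} and established in Proposition~\ref{divGproperty}, yield precisely the composition vanishing needed at both interior spots. Next, ellipticity is already recorded: by Lemma~\ref{lem:ellipticcomplex} the symbol sequence is exact at every nonzero covector $\xi\in T^\star M$, so the complex is elliptic in the sense required by the cited theorem. Finally, the operators $\stwist_{2s-2}$, $\mathcal{G}_{2s}$ and $\sdiv_{2s}$ are differential operators on $\R^3$ with the flat Levi-Civita connection, hence have constant (in particular real analytic) coefficients in Cartesian coordinates.

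Having assembled these three inputs I would simply invoke [Theorem~1.4]{Anonymous:1996ce}, which asserts that for an elliptic differential complex with real analytic coefficients on a real analytic manifold, every point admits a neighborhood $U$ on which the induced sequence on real analytic sections is exact. Applied to the complex
$$
\mathcal{S}_{2s-2}\stackrel{\stwist_{2s-2}}{\longrightarrow}\mathcal{S}_{2s}\stackrel{\mathcal{G}_{2s}}{\longrightarrow}\mathcal{S}_{2s}\stackrel{\sdiv_{2s}}{\longrightarrow}\mathcal{S}_{2s-2}
$$
this immediately yields the desired local exactness on $\mathfrak{A}(U,S_{k})$.

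The only delicate point, and the step I expect to require the most care, is to confirm that the hypotheses of the cited black-box theorem are stated in a form that matches ours: in particular, that the notion of ellipticity used there is the symbol-sequence exactness we have verified (and not, say, a stronger two-sided parametrix condition), and that the theorem is formulated for complexes of arbitrary length and valence rather than only for the classical de Rham case. Once this compatibility is checked, the proposition follows with no further computation.
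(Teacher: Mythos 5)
Your proposal is correct and follows essentially the same route as the paper: verify the complex property and ellipticity, note the constant coefficients, and invoke \cite[Theorem~1.4]{Anonymous:1996ce}. The only difference is that the paper resolves the "delicate point" you flag by naming the precise hypotheses of the cited theorem — \emph{sufficient regularity} (from constant coefficients, \cite[Remark~1.16]{Anonymous:1996ce}) and \emph{formal integrability} (from the operators being homogeneous of top order, \cite[Remark~1.21]{Anonymous:1996ce}).
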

\begin{remark}\label{integrability4} We in fact only need this result in the context of polynomials. The problem will be to solve, for any real number $\delta$, the equations
$$
\stwist_{2s-2} \phi = \psi, \quad  \text{ when }\psi\in \mathcal{P}^{<\delta}(S_{2s}) ,
$$
and 
$$
\mathcal{G}_{2s} \xi = \zeta, \quad  \text{ when }\zeta\in \mathcal{P}^{<\delta}(S_{2s}).
$$
Proposition~\ref{prop:integrability3} ensures the local existence of solutions to these equations provided that
$$
\mathcal{G}_{2s}\psi =0 \text{ and }\sdiv_{2s} \zeta = 0.
$$
By integration, these solutions are necessarily polynomials.
\end{remark}
\begin{proof} The proof of Proposition~\ref{prop:integrability3} is a direct
  consequence of the fact that the fundamental operators $\stwist_{2s-2}$,
  $\mathcal{G}_{2s}$ and $\sdiv_{2s}$ are operators with constant
  coefficients, which consist only of higher order homogeneous terms. As a
  consequence, these operators are all \emph{sufficiently regular}
in the terminology of \cite{Anonymous:1996ce} 
  (since they have constant coefficients, cf. 
  \cite[Remark~1.16]{Anonymous:1996ce}) 
and \emph{formally integrable}
  (since they have only homogeneous terms of the highest possible order,
cf. \cite[Remark~1.21]{Anonymous:1996ce}). 
Proposition~\ref{prop:integrability3} is then
  a direct consequence of \cite[Theorem~1.4]{Anonymous:1996ce}. 
\end{proof}

\section{Construction of initial data for the potential} \label{Section:ConstructPotentials}

As seen in the introduction, one of the key points when constructing the initial data for the potential for the massless free field is the ability to solve, at the level of the initial data, the equation
$$
\varphi_{A\dots F} = (\mathcal{G}_{2s} \zeta)_{A\dots F}. 
$$ 
This requires a partial generalization of Proposition \ref{prop:integrability3} to weighted Sobolev spaces.
The main difficulty in the construction of $\zeta_{A\dots F}$ is to obtain the estimate
\begin{equation*}
\Vert\zeta_{A\dots F}\Vert_{j+2s-1,\delta+2s-1}\leq C \Vert\varphi_{A\dots F}\Vert_{j,\delta}.
\end{equation*}
This inequality is similar to those in standard elliptic theory. The main idea of this section is to construct a solution using the relations between the operator $\mathcal{G}_{2s}$ and powers of the Laplacian $\Delta$ as stated in Equations \eqref{LaplacianAsGeven} and  \eqref{LaplacianAsGodd}. The strategy of the proof is as follows:
\begin{enumerate}
\item using the elliptic properties of the Laplacian and its powers, a preimage $\theta_{A\dots F}$
 of $\varphi_{A\dots F}$ is constructed;
 \item using Equations \eqref{LaplacianAsGeven} and \eqref{LaplacianAsGodd}, the constraint equation satisfied by the initial datum, and the differential complex stated in Lemma \eqref{lem:ellipticcomplex}, we prove that the only non-vanishing term of Equations \eqref{LaplacianAsGeven} and \eqref{LaplacianAsGodd} is the one containing $\mathcal{G}_{2s}$.
\end{enumerate}
However, this schematic procedure works only for a certain range of weights. Outside this range, a more thorough discussion has to be performed. One of the key facts which is used is that the polynomial nature of the elements of the kernel of the Laplacian (and its powers) makes it possible to use Proposition \ref{prop:integrability3} (and Remark \ref{integrability4}).

This section deals strictly with the initial data both for the field and the potential. The corresponding Cauchy problems for higher spin fields and for the potential are described in Section \ref{sec:mainres}.  More precisely, the details regarding the relation between the Hertz potential and the field are given in Section \ref{sec:spacespinorsplit}, which is devoted to the 3+1 splitting of the potential equation. Furthermore, the representation Theorem \ref{th:repthm} for a massless field in terms of a Hertz potential, based on the uniqueness of solutions of the Cauchy problem, is given in Section \ref{sec:representation}. 

For $s=1/2$, we immediately get the desired solution by setting $\zeta_A=\varphi_A$. For higher spin, a more careful analysis is required. To simplify the presentation, the spin-1 case is discussed first in detail, followed by the general spin case.

The following lemma is a technical result describing the orthogonality properties of the range of $\stwist$ and the kernel of $\sdiv$.
\begin{lemma}\label{orthogonalitylemma}
Assume that $\varphi_{A\dots F}\in H^1_\delta(S_{2s})$ 
satisfies the constraint $(\sdiv_{2s}\varphi)_{C\dots F}=0$, and $\eta_{C\dots F}\in H^1_{-2-\delta}(S_{2s-2})$.
Then $\varphi_{A\dots F}$ is $L^2$ orthogonal to $(\stwist_{2s-2}\eta)_{A\dots F}$.
\end{lemma}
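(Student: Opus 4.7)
The plan is to define the functional
$$T(\eta) := \int_{\R^3}\langle \varphi_{A\dots F},(\stwist_{2s-2}\eta)_{A\dots F}\rangle\,\ud\mu_{\R^3}$$
and show that (i) $T$ extends to a bounded linear functional on $H^1_{-2-\delta}(S_{2s-2})$, and (ii) $T$ vanishes on the dense subspace $C_c^\infty(S_{2s-2})$ by a formal integration by parts that swaps $\stwist_{2s-2}$ for $\sdiv_{2s}$, at which point the constraint $\sdiv_{2s}\varphi = 0$ finishes the argument.

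For (ii), take $\eta\in C_c^\infty(S_{2s-2})$. Unfolding the definition of $\stwist_{2s-2}$ and using the total symmetry of $\varphi_{A\dots F}$ to drop the outer symmetrization,
$$T(\eta) = \int_{\R^3} \varphi^{A_1\dots A_{2s}}\,\widehat{D_{A_1A_2}\eta_{A_3\dots A_{2s}}}\,\ud\mu_{\R^3}.$$
Two signs appear in sequence: one from Remark~\ref{rem:DReal} when commuting $D_{A_1A_2}$ past the hat operation, and one from integrating by parts (with no boundary contribution, since $\eta$ has compact support). These signs cancel, and the resulting integrand is $\langle\sdiv_{2s}\varphi, \eta\rangle$, which vanishes pointwise by the constraint. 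The integration by parts is justified distributionally, using only $\varphi\in H^1_{\mathrm{loc}}$.

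For (i), unpacking the weighted norm of $H^1_{-2-\delta}$ shows that $\langle r\rangle^{\delta+3/2}D\eta\in L^2(\R^3)$, i.e.\ $D\eta\in L^2_{-3-\delta}$, so $\stwist_{2s-2}\eta\in L^2_{-3-\delta}$ with $\Vert\stwist_{2s-2}\eta\Vert_{0,-3-\delta}\leq C\Vert\eta\Vert_{1,-2-\delta}$. Since $L^2_{-3-\delta}$ is the weighted $L^2$-dual of $L^2_\delta$, a weighted Cauchy--Schwarz inequality gives
$$|T(\eta)|\leq \Vert\varphi\Vert_{0,\delta}\,\Vert\stwist_{2s-2}\eta\Vert_{0,-3-\delta}\leq C\Vert\varphi\Vert_{0,\delta}\Vert\eta\Vert_{1,-2-\delta}.$$
Density of smooth compactly supported sections in $H^1_{-2-\delta}(S_{2s-2})$ is standard for weighted Sobolev spaces on $\R^3$ (see \cite{Bartnik:1986dq}); combined with step (ii) this yields $T\equiv 0$, which is the claim. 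The only potentially delicate point is the sign bookkeeping in step (ii), but it is harmless since the two signs cancel and the final integrand vanishes by the constraint regardless.
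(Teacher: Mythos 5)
Your proof is correct and follows essentially the same argument as the paper: integration by parts transfers $\stwist_{2s-2}$ onto $\varphi$ as $\sdiv_{2s}$ (the sign from Remark~\ref{rem:DReal} cancelling the sign from the integration by parts), the constraint kills the integrand, and a density argument in the weighted spaces handles the general case. The only cosmetic difference is that you approximate $\eta$ by compactly supported smooth sections and use boundedness of the pairing in $\eta$, whereas the paper approximates $\varphi$ and passes to the limit on both sides of the integrated-by-parts identity; both routes are legitimate since $H^j_\delta$ is by definition the completion of the compactly supported smooth fields.
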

\begin{proof}
Let $\{\varphi^i_{A\dots F}\}_{i=0}^\infty\subset C_0^\infty(S_{2s})$ such that $\Vert \varphi^i_{A\dots F}-\varphi_{A\dots F}\Vert_{1,\delta}\rightarrow 0$ as $i\rightarrow \infty$. 
An integration by parts and Remark \ref{rem:DReal} give
\begin{equation*}
\int_{\R^3}\varphi^i_{A\dots F} \widehat{D^{(AB}\eta^{C\dots F)}}\ud\mu_{\R^3}
= \int_{\R^3}D^{AB}\varphi^i_{A\dots F}\widehat\eta^{C\dots F} \ud \mu_{\R^3},
\end{equation*}
that is
\begin{equation*}
\langle \varphi^i_{A\dots F}, (\stwist_{2s-2}\eta)_{A\dots F}\rangle_{L^2} =
\langle (\sdiv_{2s}\varphi^i)_{C\dots F},\eta_{C\dots F}\rangle_{L^2}.
\end{equation*}
Taking the limit as $i\rightarrow \infty$ on both sides gives 
$\langle \varphi_{A\dots F}, (\stwist_{2s-2}\eta)_{A\dots F}\rangle_{L^2}=0$.
\end{proof}

\begin{definition}\label{def:Esdelta}
Let $\delta$ be in $\mathbb{R}\setminus\mathbb{Z}$ and $s\in \tfrac{1}{2}\mathbb{N}$. 
Furthermore, let
\begin{equation*}
\mathbb{F}_{s,\delta}\equiv\ker \Delta^{\lfloor s\rfloor}_{2s}\cap L^2_{-3-\delta}(S_{2s}).
\end{equation*}
Define the space $\mathbb{E}_{s,\delta}$ to be the $L^2_{-3-\delta}$-orthogonal complement of $\mathbb{F}_{s,\delta}\cap \ker\mathcal{G}_{2s}$ in $\mathbb{F}_{s,\delta}$.
\end{definition}
\begin{remark}\label{rem:Etrivial}
If $\delta\geq -2s-2$, the space $\mathbb{E}_{s,\delta}$ is trivial. This follows from the fact that  $\mathbb{F}_{s,\delta}\subset \mathcal{P}^{<-3-\delta}(S_{2s})$ and that $\mathcal{G}_{2s}$ is a homogeneous order $2s-1$ operator. With $-3-\delta \leq 2s-1$ or equivalently $\delta\geq -2s-2$ we have $\mathbb{F}_{s,\delta}\subset \mathcal{P}^{<-3-\delta}(S_{2s})\subset \ker\mathcal{G}_{2s}$ and consequently $\mathbb{E}_{s,\delta}=\{0\}$.
\end{remark}

Before we prove existence of preimages under $\mathcal{G}_{2s}$, we prove a technical lemma that allows us to reduce the problem of finding a preimage of $\ker \sdiv_{2s} \cap H^j_\delta(S_{2s})$ under $\mathcal{G}_{2s}$ to that of finding a preimage of $\ker \sdiv_{2s} \cap H^j_\delta(S_{2s})$ orthogonal to $\mathbb{E}_{s,\delta}$.
\begin{lemma}\label{lemma:orthogonalpreimage}
Let $\delta$ be in $\mathbb{R}\setminus\mathbb{Z}$, $j>0$ integer, $\varphi_{A\dots F}$ in $H^j_\delta(S_{2s})$. If $\delta < -2s-2$ we can find a $\tilde\zeta_{A\dots F}\in H^{j+2s-1}_{\delta+2s-1}(S_{2s})$ and a constant $C$ depending only on $s$, $j$ and $\delta$ such that $\varphi_{A\dots F}+(\mathcal{G}_{2s}\tilde\zeta)_{A\dots F}$ is orthogonal to $\mathbb{E}_{s,\delta}$ and
\begin{align*}
\Vert\tilde\zeta_{A\dots F}\Vert_{j+2s-1,\delta+2s-1}&\leq C \Vert\varphi_{A\dots F}\Vert_{j,\delta}.
\end{align*}
\end{lemma}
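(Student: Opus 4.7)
The plan is to reduce the orthogonality requirement to a finite-dimensional linear equation and to solve it using the very definition of $\mathbb{E}_{s,\delta}$ as the relevant obstruction. By Proposition~\ref{prop:elliptictoolbox}, $\mathbb{F}_{s,\delta}$ is a finite-dimensional subspace of $\mathcal{P}^{<-3-\delta}(S_{2s})$, and so is $\mathbb{E}_{s,\delta}$; denote its dimension by $N$. The $L^2$ pairing $\langle\psi,\eta\rangle_{L^2}=\int_{\R^3}\psi_{A\dots F}\widehat{\eta}^{A\dots F}\,\ud\mu_{\R^3}$ is bounded on $H^j_\delta\times L^2_{-3-\delta}$ by Cauchy--Schwarz, the weights $\lAngle r\rAngle^{-\delta-3/2}$ and $\lAngle r\rAngle^{\delta+3/2}$ cancelling. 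Restricting to $\mathbb{E}_{s,\delta}\subset L^2_{-3-\delta}$ therefore produces a continuous map $\Pi:H^j_\delta(S_{2s})\to \mathbb{E}_{s,\delta}^\ast$, $\Pi(\psi)(\eta)=\langle\psi,\eta\rangle_{L^2}$, and the required orthogonality on $\varphi+\mathcal{G}_{2s}\tilde\zeta$ becomes the finite-dimensional equation $(\Pi\circ\mathcal{G}_{2s})\tilde\zeta = -\Pi(\varphi)$.

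The core step is to show that $\Pi\circ \mathcal{G}_{2s}:H^{j+2s-1}_{\delta+2s-1}(S_{2s})\to \mathbb{E}_{s,\delta}^\ast$ is surjective; equivalently, that any $\eta\in\mathbb{E}_{s,\delta}$ with $\langle\mathcal{G}_{2s}\mu,\eta\rangle_{L^2}=0$ for every $\mu\in H^{j+2s-1}_{\delta+2s-1}$ vanishes identically. I would test against $\mu\in C_0^\infty(S_{2s})$, which is dense in $H^{j+2s-1}_{\delta+2s-1}$ by construction of the weighted Sobolev spaces. Because $\mathcal{G}_{2s}$ is a constant-coefficient polynomial in the derivations $D_{AB}$ (Definition~\ref{def:mathcalg}), iterated integration by parts---each $D_{AB}$ contributing one sign from the integration and one from Remark~\ref{rem:DReal}, which cancel---transfers $\mathcal{G}_{2s}$ onto $\eta$, giving the identity $\langle\mathcal{G}_{2s}\mu,\eta\rangle_{L^2}=\pm\langle\mu,\mathcal{G}_{2s}\eta\rangle_{L^2}$ with a universal sign. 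Its vanishing for every compactly supported $\mu$ forces $\mathcal{G}_{2s}\eta=0$, so $\eta\in\mathbb{E}_{s,\delta}\cap\ker\mathcal{G}_{2s}\subset \mathbb{F}_{s,\delta}\cap\ker\mathcal{G}_{2s}$. But by Definition~\ref{def:Esdelta} $\eta$ is also $L^2_{-3-\delta}$-orthogonal to exactly this same space, hence $\eta=0$.

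Surjectivity onto a finite-dimensional target immediately produces a bounded linear right inverse $R:\mathbb{E}_{s,\delta}^\ast\to H^{j+2s-1}_{\delta+2s-1}$---explicitly, pick $\mu^{(1)},\dots,\mu^{(N)}\in C_0^\infty(S_{2s})$ so that $\{\Pi(\mathcal{G}_{2s}\mu^{(k)})\}_{k=1}^N$ is a basis of $\mathbb{E}_{s,\delta}^\ast$ and let $R$ take values in their linear span. Setting $\tilde\zeta:=-R\Pi(\varphi)$ then yields $\varphi+\mathcal{G}_{2s}\tilde\zeta\perp\mathbb{E}_{s,\delta}$ along with
$$
\|\tilde\zeta_{A\dots F}\|_{j+2s-1,\delta+2s-1}\leq \|R\|\,\|\Pi\|\,\|\varphi_{A\dots F}\|_{j,\delta}=C\,\|\varphi_{A\dots F}\|_{j,\delta},
$$
with $C$ depending only on $s$, $j$ and $\delta$. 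The main obstacle I anticipate is cleanly extracting the formal symmetry relation $\langle\mathcal{G}_{2s}\mu,\eta\rangle_{L^2}=\pm\langle\mu,\mathcal{G}_{2s}\eta\rangle_{L^2}$ from the explicit expression in Definition~\ref{def:mathcalg}; this is purely algebraic given Remark~\ref{rem:DReal}, but requires careful bookkeeping of the symmetrized index contractions appearing in each term.
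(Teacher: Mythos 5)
Your argument is correct, and it reaches the same conclusion through the same two essential ingredients as the paper --- the formal (anti-)self-adjointness of $\mathcal{G}_{2s}$ under the $L^2$ pairing between $H^j_\delta$ and $L^2_{-3-\delta}$, and the definition of $\mathbb{E}_{s,\delta}$ as the orthogonal complement of $\mathbb{F}_{s,\delta}\cap\ker\mathcal{G}_{2s}$ --- but it executes them quite differently. You prove abstract surjectivity of the finite-rank map $\Pi\circ\mathcal{G}_{2s}$ by an annihilator argument (any $\eta\in\mathbb{E}_{s,\delta}$ killed by all test pairings satisfies $\mathcal{G}_{2s}\eta=0$ weakly, hence classically since $\eta$ is polynomial, hence $\eta=0$ by the defining orthogonality), and then invoke a bounded right inverse built from finitely many compactly supported preimages. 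The paper instead constructs the correction explicitly: it takes an $L^2_{-\delta-2s-2}$-orthonormal basis $\mu^i=\mathcal{G}_{2s}\xi^i$ of $\mathcal{G}_{2s}(\mathbb{E}_{s,\delta})$ with $\xi^i\in\mathbb{E}_{s,\delta}$, sets $\eta^i=\lAngle r\rAngle^{2\delta+4s+1}\mu^i$ (using a multiplication lemma of Cantor to place $\eta^i$ in $H^{j+2s-1}_{\delta+2s-1}$), verifies the biorthogonality $\lAngle\mathcal{G}_{2s}\eta^i,\xi^j\rAngle_{L^2}=-\delta^{ij}$ by the same integration by parts, and takes $\tilde\zeta=\sum_i\lAngle\varphi,\xi^i\rAngle_{L^2}\,\eta^i$. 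Your route is shorter and avoids the weighted-multiplier lemma, at the price of being non-constructive; the paper's gives an explicit formula for $\tilde\zeta$. Two minor points: the sign in $\lAngle\mathcal{G}_{2s}\mu,\eta\rAngle_{L^2}=\pm\lAngle\mu,\mathcal{G}_{2s}\eta\rAngle_{L^2}$ that you defer is computed in the paper to be $-1$, and in any case it is a fixed universal constant so your argument is insensitive to it; and your construction does not actually use the hypothesis $\delta<-2s-2$, which is consistent since for $\delta\geq-2s-2$ the space $\mathbb{E}_{s,\delta}$ is trivial (Remark~\ref{rem:Etrivial}) and the statement holds with $\tilde\zeta=0$.
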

\begin{proof}
Let $\{\mu^i_{A\dots F}\}_{i}$ be an $L^2_{-\delta-2s-2}(S_{2s})$ orthonormal basis for the finite dimensional space $\mathcal{G}_{2s}(\mathbb{E}_{s,\delta})\subset H^{j+2s-2}_{-\delta-2s-2}(S_{2s})$. 
Due to the splitting $\mathbb{F}_{s,\delta}=\mathbb{E}_{s,\delta}\oplus (\mathbb{F}_{s,\delta}\cap \ker\mathcal{G}_{2s})$, we have for each $\mu^i_{A\dots F}$ a unique $\xi^i_{A\dots F}\in \mathbb{E}_{s,\delta}$ such that $(\mathcal{G}_{2s}\xi^i)_{A\dots F}=\mu^i_{A\dots F}$, and $\{\xi^i_{A\dots F}\}_{i}$ span $\mathbb{E}_{s,\delta}$.
Let $\eta^i_{A\dots F}\equiv\lAngle r\rAngle^{2\delta+4s+1}\mu^i_{A\dots F}$.
By \cite[Lemma 5.2]{Cantor:1981bs} we have that $\eta^i_{A\dots F}\in H^{j+2s-1}_{\delta+2s-1}(S_{2s})$.
We can now approximate $\xi^i_{A\dots F}\in H^{j+4s-3}_{-\delta-3}$ by choosing a sequence 
$\{ \xi^{i,k}_{A\dots F} \}_{k \in \mathbb{N}} \subset C^\infty_0(S_{2s})$ such that $\Vert\xi^i_{A\dots F}-\xi^{i,k}_{A\dots F}\Vert_{j+4s-3,-\delta-3}\rightarrow 0$ for $k \to \infty$. 
Repeated integration by parts gives
\begin{align*}
\lAngle(\mathcal{G}_{2s}\eta^i)_{A\dots F},\xi^{j,k}_{A\dots F}\rAngle_{L^2}
={}&-\lAngle\eta^i_{A\dots F},(\mathcal{G}_{2s}\xi^{j,k})_{A\dots F}\rAngle_{L^2},
\end{align*}
where the boundary terms vanish due to the compact support of $\xi^{i,k}_{A\dots F}$.
In the limit $k\rightarrow \infty$, this gives, making use of the definition of the weighted spaces,
\begin{align*}
\lAngle(\mathcal{G}_{2s}\eta^i)_{A\dots F},\xi^j_{A\dots F}\rAngle_{L^2}
={}&-\lAngle\eta^i_{A\dots F},(\mathcal{G}_{2s}\xi^j)_{A\dots F}\rAngle_{L^2} \\
={}& -\lAngle\mu^i_{A\dots F},\mu^j_{A\dots F}\rAngle_{L^2_{-\delta-2s-2}}=-\delta^{ij}.
\end{align*}
Now, let 
\begin{align*}
\tilde \zeta_{A\dots F}\equiv{}&\sum_{i}\lAngle \varphi_{A\dots F},\xi^i_{A\dots F}\rAngle_{L^2}\eta^i_{A\dots F}.
\end{align*}
This gives that $\lAngle\varphi_{A\dots F}+(\mathcal{G}_{2s}\tilde\zeta)_{A\dots F},\xi^i_{A\dots F}\rAngle_{L^2}=0$ for all $i$. Hence, $\varphi_{A\dots F}+(\mathcal{G}_{2s}\tilde\zeta)_{A\dots F}$ is orthogonal to $\mathbb{E}_{s,\delta}$.

We also get the estimates
\begin{align}
\Vert\tilde \zeta_{A\dots F}\Vert_{j+2s-1,\delta+2s-1}\leq{}&\sum_{i}|\lAngle \varphi_{A\dots F},\xi^i_{A\dots F}\rAngle_{L^2}|\thinspace\Vert\eta^i_{A\dots F}\Vert_{j+2s-1,\delta+2s-1}\nonumber\\
\leq{}&\sum_{i}\Vert\varphi_{A\dots F}\Vert_{L^2_\delta}\Vert\xi^i_{A\dots F}\Vert_{L^2_{-3-\delta}}\Vert\eta^i_{A\dots F}\Vert_{j+2s-1,\delta+2s-1}\nonumber\\
\leq{}&C \Vert\varphi_{A\dots F}\Vert_{j,\delta},
\end{align}
where $C$ only depends on $s$, $j$ and $\delta$.
\end{proof}

\subsection{The spin-1 case}
To show how we can solve the equation $\varphi_{A\dots F}   
=(\mathcal{G}_{2s}\zeta)_{A\dots F}$, we begin with the spin-1 case to illustrate the idea. We then use the same ideas for general spin in Proposition~\ref{proprepspingeneral}. 
\begin{proposition}\label{proprepspin1}
Let $\delta$ be in  $\mathbb{R}\setminus\mathbb{Z}$, $j$ a positive integer, $\varphi_{AB}$ in $H^j_\delta(S_2)$ such that $D^{AB}\varphi_{AB}=0$. 
Then there exist a spinor field $\zeta_{AB}\in H^{j+1}_{\delta+1}(S_2)$ and a constant $C$ depending only on $\delta$ and $j$ such that
\begin{align*}
\varphi_{AB}&=(\mathcal{G}_2\zeta)_{AB},\\
\Vert\zeta_{AB}\Vert_{j+1,\delta+1}&\leq C \Vert\varphi_{AB}\Vert_{j,\delta}.
\end{align*}
\end{proposition}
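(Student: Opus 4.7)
The plan is to use the identity \eqref{lapeven} specialized to $s=1$, which reads $(\Delta_2\theta)_{AB} = (\stwist_0\sdiv_2\theta)_{AB} - (\mathcal{G}_2\scurl_2\theta)_{AB}$. If I can construct $\theta_{AB}\in H^{j+2}_{\delta+2}(S_2)$ satisfying both $\Delta_2\theta=\varphi$ and $\sdiv_2\theta=0$, the identity collapses to $\varphi_{AB}=-(\mathcal{G}_2\scurl_2\theta)_{AB}$, so the choice $\zeta_{AB}=-(\scurl_2\theta)_{AB}\in H^{j+1}_{\delta+1}(S_2)$ delivers the desired representation, and Proposition \ref{prop:elliptictoolbox} immediately provides the estimate $\|\zeta\|_{j+1,\delta+1}\leq C\|\varphi\|_{j,\delta}$. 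The two technical steps are thus (i) making $\varphi$ land in the image of $\Delta_2$, and (ii) killing the divergence of the preimage.

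For (i), solvability of $\Delta_2\theta=\varphi$ requires $\varphi$ to be $L^2$-orthogonal to the cokernel $\mathbb{F}_{1,\delta}$ of $\Delta_2:H^{j+2}_{\delta+2}\to H^j_\delta$, and I handle this via the splitting $\mathbb{F}_{1,\delta}=\mathbb{E}_{1,\delta}\oplus(\mathbb{F}_{1,\delta}\cap\ker\mathcal{G}_2)$. For the second summand each element is a polynomial in $\ker\Delta_2\cap\ker\mathcal{G}_2$ and, by Remark \ref{integrability4}, can be written as $(\stwist_0\eta)_{AB}$ for a polynomial $\eta\in\mathcal{P}^{<-2-\delta}(S_0)\subset H^1_{-2-\delta}(S_0)$; Lemma \ref{orthogonalitylemma} combined with $\sdiv_2\varphi=0$ then yields orthogonality automatically. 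The first summand is non-trivial only for $\delta<-4$ (Remark \ref{rem:Etrivial}), in which case Lemma \ref{lemma:orthogonalpreimage} supplies $\tilde\zeta_{AB}\in H^{j+1}_{\delta+1}(S_2)$ such that $\varphi':=\varphi+\mathcal{G}_2\tilde\zeta$ is orthogonal to $\mathbb{E}_{1,\delta}$; since $\sdiv_2\mathcal{G}_2=0$ the constraint is preserved, and the final $\zeta$ will absorb a compensating term $-\tilde\zeta$. Proposition \ref{prop:elliptictoolbox} then produces $\theta$ with $\Delta_2\theta=\varphi'$ and $\|\theta\|_{j+2,\delta+2}\leq C\|\varphi'\|_{j,\delta}$.

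For (ii), the flat-space commutation $\sdiv_2\Delta_2=\Delta_0\sdiv_2$ together with $\sdiv_2\varphi'=0$ forces $\alpha:=\sdiv_2\theta\in\ker\Delta_0\cap H^{j+1}_{\delta+1}(S_0)\subset\mathcal{P}^{<\delta+1}(S_0)$. For $\delta<-1$ this space is trivial and $\sdiv_2\theta=0$ is automatic; this gives the argument sketched in the introduction for $\delta=-5/2$ and $\delta=-7/2$. The main obstacle is the complementary range $\delta>-1$: here $\alpha$ can be a genuinely non-trivial harmonic polynomial and must be removed by finding a harmonic spin-$2$ polynomial $\mathcal{R}_{AB}\in\ker\Delta_2\cap\mathcal{P}^{<\delta+2}(S_2)$ with $\sdiv_2\mathcal{R}=\alpha$, so that $\theta-\mathcal{R}$ still solves the Laplace equation but now has vanishing divergence. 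The existence of $\mathcal{R}$ is a finite-dimensional linear-algebra statement about the surjectivity of $\sdiv_2$ between the corresponding spaces of harmonic polynomials, which can be verified by a decomposition into spherical harmonics, or by extending the complex of Lemma \ref{lem:ellipticcomplex} one step to the right and appealing to Proposition \ref{prop:integrability3} on polynomial sections. Assembling everything, $\zeta_{AB} = -(\scurl_2(\theta-\mathcal{R}))_{AB} - \tilde\zeta_{AB}$ satisfies $\mathcal{G}_2\zeta=\varphi$ together with the weighted estimate, where the bound on $\mathcal{R}$ is controlled in terms of $\alpha$ and hence of $\varphi$ through the finite-dimensional nature of the correction.
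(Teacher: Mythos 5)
Your reduction to the case where $\varphi_{AB}$ is orthogonal to $\mathbb{E}_{1,\delta}$, the use of Lemma~\ref{orthogonalitylemma} together with Remark~\ref{integrability4} to get orthogonality to $\mathbb{F}_{1,\delta}\cap\ker\mathcal{G}_2$, and the Fredholm step producing $\theta_{AB}$ with $\Delta_2\theta=\varphi'$ and the weighted bound all match the paper's proof. The divergence is in how you dispose of the term $(\stwist_0\sdiv_2\theta)_{AB}$. Two remarks there. First, a minor point: the paper splits at $\delta<0$ versus $\delta>0$, not at $\delta=-1$; for $-1<\delta<0$ the harmonic polynomial $\sdiv_2\theta$ is a \emph{constant} and is therefore annihilated by $\stwist_0$, so no correction is needed at all, whereas your partition sends this easy sub-case into the hard branch.

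Second, and more substantively: in the range where $\sdiv_2\theta$ is a genuinely non-constant harmonic polynomial, you propose to replace $\theta$ by $\theta-\mathcal{R}$ with $\mathcal{R}_{AB}\in\ker\Delta_2\cap\mathcal{P}^{<\delta+2}(S_2)$ and $\sdiv_2\mathcal{R}=\sdiv_2\theta$. This requires surjectivity of $\sdiv_2$ from \emph{harmonic} polynomial $S_2$-fields onto harmonic scalar polynomials, a statement that is not among the results established in the paper and that you do not prove. Neither of your suggested routes closes it as stated: Proposition~\ref{prop:integrability3} gives exactness at the two middle spots of the complex, not at the final $\mathcal{S}_{2s-2}$ spot, and even granting polynomial surjectivity of $\sdiv_2$, a generic polynomial preimage of a harmonic $\alpha$ need not itself be harmonic -- arranging harmonicity of the preimage is exactly the nontrivial content (e.g.\ $\stwist_0\beta$ with $\Delta_0\beta=\alpha$ is a preimage, but $\Delta_2\stwist_0\beta=\stwist_0\alpha\neq 0$ in general). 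The paper avoids this entirely by correcting $\zeta$ rather than $\theta$: since $\sdiv_2(\stwist_0\sdiv_2\theta)=\Delta_0\sdiv_2\theta=0$, the obstruction term $\stwist_0\sdiv_2\theta$ is a divergence-free polynomial, hence by the already-proved exactness at the middle spot it lies in the image of $\scurl_2$ on polynomials; one adds the (finite-dimensionally bounded) preimage $\mathcal{T}\stwist_0\sdiv_2\theta$ to $\zeta_{AB}=-(\scurl_2\theta)_{AB}$. Either supply a proof of the harmonic surjectivity of $\sdiv_2$, or adopt this correction of $\zeta$; with that repair your argument is complete.
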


\begin{proof}
By Remark~\ref{rem:Etrivial} we see that $\varphi_{AB}$ is automatically orthogonal to $\mathbb{E}_{1,\delta}$ if $\delta\geq -4$. If $\delta< -4$, we can use Lemma~\ref{lemma:orthogonalpreimage} to 
construct $\tilde\zeta_{AB}\in H^{j+1}_{\delta+1}(S_{2})$ such that $\varphi_{AB}+(\mathcal{G}_{2}\tilde\zeta)_{AB}\in \ker\sdiv_2\cap H^j_\delta(S_2)$ is orthogonal to $\mathbb{E}_{1,\delta}$. The estimates in Lemma~\ref{lemma:orthogonalpreimage} are of the right type, so if we can prove the proposition for $\varphi_{AB}+(\mathcal{G}_{2}\tilde\zeta)_{AB}$ instead of $\varphi_{AB}$, we are done. We can therefore in the rest of the proof without loss of generality assume that $\varphi_{AB}$ is orthogonal to $\mathbb{E}_{1,\delta}$. From now on, to make the link with the language of forms clear, we replace the operator $\mathcal G_2 = 2\scurl_2$ with the curl operator $\scurl_2$.

Let $\theta_{AB}$ be in $\mathbb{F}_{1,\delta}\cap \ker\scurl_2$. The field $\theta_{AB}$ is then in $\mathcal{P}^{<-3-\delta}(S_2)$ and therefore real analytic. Furthermore it is curl-free (\emph{i.e.} in $\ker \scurl_2$). Using Proposition \ref{prop:integrability3}, the sequence 
$$
\mathcal{P}^{<-4-\delta}(S_0)\stackrel{\stwist_0}{\longrightarrow} \mathcal{P}^{<-3-\delta}(S_2)  \stackrel{\scurl_2}{\longrightarrow} \mathcal{P}^{<-2-\delta}(S_2),
$$
is exact and, therefore, $\theta_{AB}$ can be written as a gradient $D_{AB}\eta = (\stwist_0\eta)_{AB} =\theta_{AB}$, where $\eta\in \mathcal{P}^{<-2-\delta}(S_2)\subset H^1_{-2-\delta}(S_2)$. Then, by Lemma~\ref{orthogonalitylemma}, $\varphi_{AB}$ is $L_2$-orthogonal to $\theta_{AB}$. As $\theta_{AB}$ was arbitrary in $\mathbb{F}_{1,\delta}\cap \ker\scurl_2$ and $\varphi_{AB}$ was by assumption orthogonal to $\mathbb{E}_{1,\delta}$, we have that $\varphi_{AB}$ is orthogonal to all of $\mathbb{F}_{1,\delta}$.

The Laplacian $\Delta_2: H^{j+2}_{\delta+2}(S_2)\rightarrow H^j_\delta(S_2)$ is formally self-adjoint and has closed range and finite dimensional kernel -- see \cite{Cantor:1981bs,McOwen:1980gz,Lockhart:1983ht} for details. By Fredholm's alternative, there exists a $\theta_{AB}\in H^{j+2}_{\delta+2}(S_2)$ such that $\varphi_{AB}=(\Delta_2\theta)_{AB}$. Using Proposition~\ref{prop:elliptictoolbox}, we can modify $\theta_{AB}$ within the class $\ker\Delta_2\cap H^{j+2}_{\delta+2}$ to obtain the estimate
\begin{equation*}
\Vert\theta_{AB}\Vert_{j+2,\delta+2}\leq C \Vert \varphi_{AB}\Vert_{j,\delta},
\end{equation*}
where $C$ only depends on $j$ and $\delta$.

Now, we can re-express the Laplacian $\Delta_2$ as
\begin{equation*}
\varphi_{AB}=(\Delta_2\theta)_{AB}=
 -2 (\scurl_2 \scurl_2 \theta)_{AB} + (\stwist_0 \sdiv_2 \theta)_{AB}.
\end{equation*}
We now want to show that $(\stwist_0 \sdiv_2 \theta)_{AB}$ vanishes (for $\delta <0$) or is in the image of $\scurl_2$ (for $\delta>0$).

By the constraint equation and commutations of the divergence and the Laplace operator, we have
$$
0=(\sdiv_2\varphi)=D^{AB}\varphi_{AB} = D^{AB}\left(\Delta \theta_{AB}\right) = \Delta \left(D^{AB}\varphi_{AB}\right) = (\Delta_0\sdiv_2\theta). 
$$
Hence, $(\sdiv_2\theta)\in \ker \Delta_0\cap L^2_{\delta+1}(S_0)$. 

If $\delta < 0$, we know that $\ker\Delta_0\cap L^2_{\delta+1}(S_0)$ only contains polynomials with degree $<1$, \emph{i.e.} constants, which means that they are in the kernel of the gradient operator $\stwist_0$. Hence, 
\begin{equation*}
\varphi_{AB}= -2 (\scurl_2 \scurl_2 \theta)_{AB}= -(\mathcal{G}_2 \scurl_2 \theta)_{AB}
\end{equation*}
and we can therefore choose $\zeta_{AB}=-(\scurl_2\theta)_{AB}$, and we get
\begin{equation*} 
\Vert\zeta_{AB}\Vert_{j+1,\delta+1}\leq \Vert\theta_{AB}\Vert_{j+2,\delta+2}\leq C\Vert\varphi_{AB}\Vert_{j,\delta}.
\end{equation*}

If $\delta > 0$, we need to be more careful. Let $\Omega\equiv \ker\Delta_0\cap L^2_{\delta+1}(S_0)$, \emph{i.e.} the set of harmonic polynomials with degree strictly smaller than $\delta+1$. Then $\stwist_0(\Omega)\subset L^2_{\delta}(S_2)$ is also a finite dimensional space of smooth fields. Since $\sdiv_2 \stwist_0 = \Delta_0$, we have the following diagram
\begin{equation*}
\xymatrix{
\Omega\subset\ker\left(\Delta_0\right)\subset \mathcal{S}_0\ar[r]^{\stwist_0}&\stwist_0\left(\Omega\right)\subset\mathcal{S}_2\ar[r]^{\sdiv_2}&\{0\}\subset\mathcal{S}_2\\
&(\scurl_2){}^{-1}\left(\stwist_0\left(\Omega\right)\right)\subset \mathcal{S}_2 \ar[u]^{\scurl_{2}}
      }.
\end{equation*}
Using the integrability condition stated in Proposition \ref{prop:integrability3}, and Remark~\ref{integrability4}, and more specifically by
$$
\mathcal{P}^{<\delta+1}(S_2)\stackrel{\scurl_2}{\longrightarrow} \mathcal{P}^{<\delta}(S_2) \stackrel{\sdiv_2}{\longrightarrow} \mathcal{P}^{<\delta-1}(S_0),
$$
we can define an a priori non unique linear mapping $\mathcal{T}: \stwist_0(\Omega)\rightarrow \mathcal{S}_2$, such that $\scurl_2\mathcal{T}$ acts as the identity on $\stwist_0\left(\Omega\right)$. As a linear operator from the finite dimensional space $\stwist_0(\Omega)\subset H^j_\delta(S_2)$ into $ H^{j+2}_{\delta+2}(S_2)$ (endowed with their respective induced Sobolev norms), $\mathcal{T}$ is bounded and, therefore, there exists a constant $C$, depending on the choice of the mapping $\mathcal{T}$, such that
\begin{align*}
\Vert(\mathcal{T}\stwist_0\sdiv_2\theta)_{AB}\Vert_{j+1,\delta+1}
&\leq C \Vert (\stwist_0\sdiv_2\theta)_{AB}\Vert_{j,\delta},\\
(\scurl_2\mathcal{T}\stwist_0\sdiv_2\theta)_{AB}
&=(\stwist_0\sdiv_2\theta)_{AB}.
\end{align*}
Now, let 
\begin{equation*}
\zeta_{AB}=-(\scurl_2\theta)_{AB}+(\mathcal{T}\stwist_0\sdiv_2\theta)_{AB}.
\end{equation*}
This gives the desired relations
\begin{align*}
(\mathcal{G}_2\zeta)_{AB}&=
2(\scurl_2\zeta)_{AB}=
-2(\scurl_2\scurl_2\theta)_{AB}+
2(\stwist_0\sdiv_4\theta)_{AB}=\varphi_{AB},\\
\Vert\zeta_{AB}\Vert_{j+1,\delta+1}&\leq 
\Vert\theta_{AB}\Vert_{j+2,\delta+2}
+C \Vert (\stwist_0\sdiv_2\theta)_{AB}\Vert_{j,\delta}
\leq C\Vert\varphi_{AB}\Vert_{j,\delta}.
\end{align*}
\end{proof}

\subsection{The spin-$s$ case}
\begin{proposition}\label{proprepspingeneral}
Let $\delta$ be in $\mathbb{R}\setminus\mathbb{Z}$, $j>0$ integer, $\varphi_{A\dots F}$ in $\ker \sdiv_{2s} \cap H^j_\delta(S_{2s})$. Let $m=\lfloor s\rfloor$, \emph{i.e.} the largest integer such that $m\leq s$. 
Then there exist a spinor field $\zeta_{A\dots F}\in H^{j+2s-1}_{\delta+2s-1}(S_{2s})$ and a constant $C$ depending only on $\delta$ and $j$ such that
\begin{align*}
\varphi_{A\dots F}&=(\mathcal{G}_{2s}\zeta)_{A\dots F},\\
\Vert\zeta_{A\dots F}\Vert_{j+2s-1,\delta+2s-1}&\leq C \Vert\varphi_{A\dots F}\Vert_{j,\delta}.
\end{align*}
\end{proposition}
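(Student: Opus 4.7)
The plan is to follow the structure of the spin-$1$ case (Proposition~\ref{proprepspin1}), replacing $\Delta_2$ by the higher-order elliptic operator $\Delta^{\lfloor s\rfloor}_{2s}$ and using the identities \eqref{LaplacianAsGeven} and \eqref{LaplacianAsGodd}, which decompose the appropriate power of the Laplacian as the sum of a $\mathcal{G}_{2s}$-term and a ``twistor'' term $\stwist_{2s-2}\mathcal{F}_{2s-2}\sdiv_{2s}$. The Fredholm theory of Proposition~\ref{prop:elliptictoolbox} supplies a preimage under $\Delta^{\lfloor s\rfloor}_{2s}$, while the polynomial integrability from Remark~\ref{integrability4} absorbs the twistor leftover into the final $\zeta_{A\dots F}$.

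The first step is a reduction to the case where $\varphi_{A\dots F}$ is $L^2_{-3-\delta}$-orthogonal to $\mathbb{F}_{s,\delta}$. By Remark~\ref{rem:Etrivial} this is automatic for $\delta\geq -2s-2$; for $\delta<-2s-2$, Lemma~\ref{lemma:orthogonalpreimage} supplies $\tilde\zeta_{A\dots F}\in H^{j+2s-1}_{\delta+2s-1}(S_{2s})$ with the correct estimate such that $\varphi_{A\dots F}+(\mathcal{G}_{2s}\tilde\zeta)_{A\dots F}\in\ker\sdiv_{2s}\cap H^j_\delta$ is orthogonal to $\mathbb{E}_{s,\delta}$. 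I may therefore assume $\varphi\perp\mathbb{E}_{s,\delta}$ and subtract $\tilde\zeta$ from the final answer. To upgrade this to orthogonality with all of $\mathbb{F}_{s,\delta}$, I pick $\theta_{A\dots F}\in\mathbb{F}_{s,\delta}\cap\ker\mathcal{G}_{2s}$: Proposition~\ref{prop:elliptictoolbox} makes it a polynomial in $\mathcal{P}^{<-3-\delta}(S_{2s})$, and Remark~\ref{integrability4} applied to the exact sequence of Proposition~\ref{prop:integrability3} produces a polynomial $\eta_{A\dots F}\in H^1_{-2-\delta}(S_{2s-2})$ with $\theta_{A\dots F}=(\stwist_{2s-2}\eta)_{A\dots F}$; Lemma~\ref{orthogonalitylemma} then yields $\langle\varphi,\theta\rangle_{L^2}=0$, which combined with the splitting in Definition~\ref{def:Esdelta} gives $\varphi\perp\mathbb{F}_{s,\delta}$.

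Since the cokernel of $\Delta^{\lfloor s\rfloor}_{2s}:H^{j+2\lfloor s\rfloor}_{\delta+2\lfloor s\rfloor}\to H^j_\delta$ is $L^2$-dual to $\mathbb{F}_{s,\delta}$ by Proposition~\ref{prop:elliptictoolbox}, the orthogonality just established produces $\theta_{A\dots F}\in H^{j+2\lfloor s\rfloor}_{\delta+2\lfloor s\rfloor}(S_{2s})$ with $(\Delta^{\lfloor s\rfloor}_{2s}\theta)_{A\dots F}=\varphi_{A\dots F}$ and $\Vert\theta\Vert_{j+2\lfloor s\rfloor,\delta+2\lfloor s\rfloor}\leq C\Vert\varphi\Vert_{j,\delta}$. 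Inserting $\theta$ into \eqref{LaplacianAsGeven} or \eqref{LaplacianAsGodd} yields
\begin{equation*}
\varphi_{A\dots F}=(\stwist_{2s-2}\mathcal{F}_{2s-2}\sdiv_{2s}\theta)_{A\dots F}+c_s(\mathcal{G}_{2s}\mu)_{A\dots F},
\end{equation*}
with $c_s\neq 0$ and $\mu_{A\dots F}$ equal to $(\scurl_{2s}\theta)_{A\dots F}$ for integer $s$ and to $\theta_{A\dots F}$ for half-integer $s$. Since $\sdiv_{2s}$ commutes with $\Delta$, the constraint $\sdiv_{2s}\varphi=0$ forces $\Delta^{\lfloor s\rfloor}_{2s-2}\sdiv_{2s}\theta=0$, and Proposition~\ref{prop:elliptictoolbox} then places $\sdiv_{2s}\theta$ in $\mathcal{P}^{<\delta+2\lfloor s\rfloor-1}(S_{2s-2})$. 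Consequently $\alpha_{A\dots F}\equiv(\stwist_{2s-2}\mathcal{F}_{2s-2}\sdiv_{2s}\theta)_{A\dots F}$ is a polynomial of degree strictly less than $\delta$; taking $\sdiv_{2s}$ of the displayed identity and using $\sdiv_{2s}\mathcal{G}_{2s}=0$ puts it in $\ker\sdiv_{2s}$. If $\delta<0$ this forces $\alpha=0$ and I set $\zeta_{A\dots F}=c_s^{-1}\mu_{A\dots F}$; if $\delta>0$, $\alpha$ lies in a finite dimensional polynomial subspace of the image of $\mathcal{G}_{2s}$ by Remark~\ref{integrability4}, and a linear right inverse $\mathcal{T}$ of $\mathcal{G}_{2s}$ on that subspace is automatically bounded into $H^{j+2s-1}_{\delta+2s-1}(S_{2s})$, so
\begin{equation*}
\zeta_{A\dots F}=c_s^{-1}\mu_{A\dots F}+c_s^{-1}(\mathcal{T}\alpha)_{A\dots F}
\end{equation*}
does the job. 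The desired estimate follows from the Fredholm estimate for $\theta$, the boundedness of $\scurl_{2s}$ on weighted Sobolev spaces, and the finite-dimensional boundedness of $\mathcal{T}$.

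The principal technical obstacle I expect is the careful bookkeeping of polynomial degrees through the order $2\lfloor s\rfloor-1$ composition $\stwist_{2s-2}\mathcal{F}_{2s-2}$: this is what places $\alpha$ in $\mathcal{P}^{<\delta}(S_{2s})$ and, via Remark~\ref{integrability4}, guarantees that $\mathcal{T}\alpha$ lands in the correct space $H^{j+2s-1}_{\delta+2s-1}(S_{2s})$ rather than a larger weighted space. This mirrors the $\delta<0$ versus $\delta>0$ dichotomy of the spin-$1$ proof and is resolved by combining the kernel bound in Proposition~\ref{prop:elliptictoolbox} with the exactness of the polynomial sequence in Proposition~\ref{prop:integrability3}.
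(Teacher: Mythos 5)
Your proposal is correct and follows essentially the same route as the paper's proof: reduction to $\varphi\perp\mathbb{E}_{s,\delta}$ via Lemma~\ref{lemma:orthogonalpreimage}, upgrade to $\varphi\perp\mathbb{F}_{s,\delta}$ via Proposition~\ref{prop:integrability3} and Lemma~\ref{orthogonalitylemma}, Fredholm solvability of $\Delta^{\lfloor s\rfloor}_{2s}$, the identities \eqref{LaplacianAsGeven}--\eqref{LaplacianAsGodd}, and the $\delta<0$ versus $\delta>0$ dichotomy with a bounded finite-dimensional right inverse $\mathcal{T}$ of $\mathcal{G}_{2s}$ on the polynomial leftover. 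The only slip is the coefficient in your final formula, which should read $\zeta=c_s\mu+\mathcal{T}\alpha$ rather than $c_s^{-1}\mu+c_s^{-1}\mathcal{T}\alpha$; this is immaterial since $c_s$ is a fixed nonzero constant.
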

\begin{proof}
By Remark~\ref{rem:Etrivial} we see that $\varphi_{A\dots F}$ is automatically orthogonal to $\mathbb{E}_{s,\delta}$ if $\delta\geq -2s-2$. If $\delta< -2s-2$, we can use Lemma~\ref{lemma:orthogonalpreimage} to 
construct $\tilde\zeta_{A\dots F}\in H^{j+2s-1}_{\delta+2s-1}(S_{2s})$ such that $\varphi_{A\dots F}+(\mathcal{G}_{2s}\tilde\zeta)_{A\dots F}\in \ker\sdiv_{2s}\cap H^j_\delta(S_{2s})$ is orthogonal to $\mathbb{E}_{s,\delta}$. The estimates in Lemma~\ref{lemma:orthogonalpreimage} are of the right type, so if we can prove the proposition for $\varphi_{A\dots F}+(\mathcal{G}_{2s}\tilde\zeta)_{A\dots F}$ instead of $\varphi_{A\dots F}$, we are done. Without loss of generality, we can therefore for the rest of the proof assume that $\varphi_{A\dots F}$ is orthogonal to $\mathbb{E}_{s,\delta}$.

Now, we will establish that $\varphi_{A\dots F}$ is orthogonal to $\mathbb{F}_{s,\delta}$ by using the constraint equation and the orthogonality to $\mathbb{E}_{s,\delta}$. The spinors in $\mathbb{F}_{s,\delta}\subset \mathcal{P}^{<-3-\delta}(S_{2s})$ are polynomial, so we can use Proposition~\ref{prop:integrability3} to conclude that $\mathbb{F}_{s,\delta}\cap \ker\mathcal{G}_{2s}=\mathbb{F}_{s,\delta}\cap \stwist_{2s-2}(\mathcal{P}^{<-2-\delta}(S_{2s-2}))$. But $\mathcal{P}^{<-2-\delta}(S_{2s-2})\subset H^1_{-2-\delta}(S_{2s-2})$, so Lemma~\ref{orthogonalitylemma} gives that $\lAngle \varphi_{A\dots F}, \zeta_{A\dots F}\rAngle_{L^2}=0$ for all $\zeta_{A\dots F}\in \mathbb{F}_{s,\delta}\cap \stwist_{2s-2}(\mathcal{P}^{<-2-\delta}(S_{2s-2}))=\mathbb{F}_{s,\delta}\cap \ker\mathcal{G}_{2s}$. By assumption $\varphi_{A\dots F}$ is orthogonal to $\mathbb{E}_{s,\delta}$ and therefore orthogonal to all of $\mathbb{F}_{s,\delta}$.

The operator $\Delta^m_{2s}: H^{j+2m}_{\delta+2m}(S_{2s})\rightarrow H^j_\delta(S_{2s})$ is formally self-adjoint and has closed range and finite dimensional kernel -- see \cite{Cantor:1981bs,McOwen:1980gz,Lockhart:1983ht} for details. By the Fredholm alternative, and the orthogonality, there exists a $\theta_{A\dots F}\in H^{j+2m}_{\delta+2m}(S_{2s})$ such that $\varphi_{A\dots F}=(\Delta^m_{2s}\theta)_{A\dots F}$. 
Using Proposition~\ref{prop:elliptictoolbox} we can modify $\theta_{A\dots F}$ within the class 
$\ker\Delta^m_{2s}\cap H^{j+2m}_{\delta+2m}(S_{2s})$ to obtain the estimate
\begin{equation*}
\Vert\theta_{A\dots F}\Vert_{j+2m,\delta+2m}\leq C \Vert \varphi_{A\dots F}\Vert_{j,\delta},
\end{equation*}
where $C$ only depends on $j$ and $\delta$.

For integer spin we can express the $\Delta^m_{2s}$ operator as
\begin{equation*}
(\Delta^{m}_{2s}\theta)_{A\dots F}=
(\stwist_{2s-2}\mathcal{F}_{2s-2}\sdiv_{2s}\theta)_{A\dots F}-(-2)^{1-m}(\mathcal{G}_{2s}\scurl_{2s}\theta)_{A\dots F}.
\end{equation*}
For half integer spin we can express the $\Delta^m_{2s}$ operator as
$2s=2m+1$
\begin{equation*}
 (\Delta^{m}_{2s}\theta)_{A\dots F}=
(\stwist_{2s-2}\mathcal{F}_{2s-2}\sdiv_{2s}\theta)_{A\dots F}
+(-2)^{-m}(\mathcal{G}_{2s}\theta)_{A\dots F} .
\end{equation*}

We now want to show that $(\stwist_{2s-2}\mathcal{F}_{2s-2}\sdiv_{2s}\theta)_{A\dots F}$ vanishes
 (for $\delta<0$) or is in the image of $\mathcal{G}_{2s}$ (for $\delta>0)$.

By the constraint equation and the fact that the divergence and the Laplace operator commute, we have
$$
0=(\sdiv_{2s}\varphi)_{C\dots F}=D^{AB}\varphi_{A\dots F} = D^{AB}\left(\Delta_{2s}^m 
\theta_{A\dots F}\right) = \Delta^m_{2s-2} \left(D^{AB}\varphi_{A\dots F}\right) = 
(\Delta^m_{2s-2} \sdiv_{2s}\theta)_{C\dots F}. 
$$
Hence, $(\sdiv_{2s}\theta)_{C\dots F}$ is in $\ker \Delta^m_{2s-2}\cap 
L^2_{\delta+2m-1}(S_{2s-2})$. 

If $\delta < 0$, we know that fields in $\ker \Delta^m_{2s-2}\cap L^2_{\delta+2m-1}(S_{2s-2})$ are 
in $\mathcal{P}^{<2m-1}(S_{2s-2})$, i. e. they are spanned by constant spinors times polynomials with maximal 
degree $2m-2$. They therefore belong to the kernel of the homogeneous order $2m-1$ operator 
$\stwist_{2s-2}\mathcal{F}_{2s-2}$. Hence, $(\stwist_{2s-2}\mathcal{F}_{2s-2}\sdiv_{2s}\theta)
_{A\dots F}=0$ and we get
\begin{equation*}
\varphi_{A\dots F}=
-(-2)^{1-m}(\mathcal{G}_{2s}\scurl_{2s}\theta)_{A\dots F},
\end{equation*}
for integer spin, and
\begin{equation*}
\varphi_{A\dots F}=(-2)^{-m}(\mathcal{G}_{2s}\theta)_{A\dots F},
\end{equation*}
for half integer spin.
For integer spin we can therefore choose $\zeta_{A\dots F}=-(-2)^{1-m}(\scurl_{2s}\theta)_{A\dots F}$, and we get
\begin{equation*} 
\Vert\zeta_{A\dots F}\Vert_{j+2s-1,\delta+2s-1}\leq C \Vert\theta_{A\dots F}\Vert_{j+2m,\delta+2m}\leq C\Vert\varphi_{A\dots F}\Vert_{j,\delta}.
\end{equation*}
For half integer spin we can choose $\zeta_{A\dots F}=(-2)^{-m}\theta_{A\dots F}$, and we get
\begin{equation*} 
\Vert\zeta_{A\dots F}\Vert_{j+2s-1,\delta+2s-1}=(-2)^{-m} \Vert\theta_{A\dots F}\Vert_{j+2m,\delta+2m}\leq C\Vert\varphi_{A\dots F}\Vert_{j,\delta}.
\end{equation*}

If $\delta > 0$, we need to be more careful. Let $\Omega\equiv \ker(\Delta^m_{2s-2})\cap\text{im}
(\sdiv_{2s})\cap L^2_{\delta+2m-1}(S_{2s-2})$. We know that it is a finite dimensional space of 
polynomial fields. $\stwist_{2s-2}\mathcal{F}_{2s-2}(\Omega)\subset L^2_{\delta}(S_{2s})$ is 
therefore also a finite dimensional space in $\mathcal{P}^{<\delta}(S_{2s})$.

Using the relations \eqref{eq:divGproperty}, \eqref{LaplacianAsGeven} and \eqref{LaplacianAsGodd}
we get 
\begin{equation*}
\Delta^m_{2s-2} \sdiv_{2s}=\sdiv_{2s}\Delta^m_{2s}
=\sdiv_{2s}\stwist_{2s-2}\mathcal{F}_{2s-2}\sdiv_{2s}.
\end{equation*}
Consequently, on $\Omega\subset\text{im}(\sdiv_{2s})$, the following relation holds
$$
\sdiv_{2s}\stwist_{2s-2}\mathcal{F}_{2s-2}\bigr |_\Omega=\Delta^m_{2s}\bigr |_\Omega=0.
$$
The relations between the considered operators can be summarized by
\begin{equation*}
\xymatrix{
\Omega\subset\ker\left(\Delta^m_{2s-2}\right)\subset \mathcal{S}_{2s-2}\ar[r]^{\stwist_{2s-2}\mathcal{F}_{2s-2}}&\stwist_{2s-2}\mathcal{F}_{2s-2}\left(\Omega\right)\subset\mathcal{S}_{2s}\ar[r]^{\sdiv_{2s}}&\{0\}\subset\mathcal{S}_{2s-2}\\
&(\mathcal{G}_{2s}){}^{-1}\left(\stwist_{2s-2}\mathcal{F}_{2s-2}\left(\Omega\right)\right)\subset 
\mathcal{S}_{2s}\ar[u]^{\mathcal{G}_{2s}}
     }.
\end{equation*}
Using the integrability condition stated by the exact sequence in Proposition~\ref{prop:integrability3} applied 
to polynomials (cf. Remark~\ref{integrability4}), and more specifically
$$
\mathcal{P}^{<\delta + 2s-1}(S_{2s})\stackrel{\mathcal{G}_{2s}}{\longrightarrow}  \mathcal{P}^{<\delta }(S_{2s}) \stackrel{\sdiv_{2s}}
{\longrightarrow}\mathcal{P}^{<\delta -1}(S_{2s-2}),
$$
we can define an a priori non unique linear mapping $\mathcal{T}: 
\stwist_{2s-2}\mathcal{F}_{2s-2}\left(\Omega\right) \rightarrow \mathcal{S}_{2s} $ such that 
$\mathcal{G}_{2s}\mathcal{T}$ is the identity operator on $\stwist_{2s-2}\mathcal{F}_{2s-2}(\Omega)$. As 
a linear operator on the finite dimensional space $\stwist_{2s-2}\mathcal{F}_{2s-2}(\Omega)\subset 
H^j_\delta(S_{2s})$ into $H^{j+2s-1}_{\delta+2s-1}(S_{2s})$ (endowed with their 
respective induced Sobolev norms), $\mathcal{T}$ is bounded and, therefore, there exists a constant $C$, 
depending on the choice of the operator $\mathcal{T}$, such that
\begin{align*}
\Vert(\mathcal{T}\stwist_{2s-2}\mathcal{F}_{2s-2}\sdiv_{2s}\theta)_{A\dots F}\Vert_{j+2s-1,\delta+2s-1}
&\leq C \Vert (\stwist_{2s-2}\mathcal{F}_{2s-2}\sdiv_{2s}\theta)_{A\dots F}\Vert_{j,\delta},\\
(\mathcal{G}_{2s}\mathcal{T}\stwist_{2s-2}\mathcal{F}_{2s-2}\sdiv_{2s}\theta)_{A\dots F}
&=(\stwist_{2s-2}\mathcal{F}_{2s-2}\sdiv_{2s}\theta)_{A\dots F}.
\end{align*}
Now, for integer spin we can therefore choose 
\begin{equation*}
\zeta_{A\dots F}=
(\mathcal{T}\stwist_{2s-2}\mathcal{F}_{2s-2}\sdiv_{2s}\theta)_{A\dots F}
-(-2)^{1-m}(\scurl_{2s}\theta)_{A\dots F}.
\end{equation*}
This gives the desired relations
\begin{align*}
(\mathcal{G}_{2s}\zeta)_{A\dots F}&=
(\stwist_{2s-2}\mathcal{F}_{2s-2}\sdiv_{2s}\theta)_{A\dots F}
-(-2)^{1-m}(\mathcal{G}_{2s}\scurl_{2s}\theta)_{A\dots F}\nonumber\\
&=(\Delta^m_{2s}\theta)_{A\dots F}=\varphi_{A\dots F},\\
\Vert\zeta_{A\dots F}\Vert_{j+2s-1,\delta+2s-1}&\leq 
C(\Vert\theta_{A\dots F}\Vert_{j+2m,\delta+2m}
+ \Vert (\stwist_{2s-2}\mathcal{F}_{2s-2}\sdiv_{2s}\theta)_{A\dots F}\Vert_{j,\delta})\nonumber\\
&\leq C\Vert\varphi_{A\dots F}\Vert_{j,\delta}.
\end{align*}

For half integer spin, we can choose 
\begin{equation*}
\zeta_{A\dots F}=
(\mathcal{T}\stwist_{2s-2}\mathcal{F}_{2s-2}\sdiv_{2s}\theta)_{A\dots F}
+(-2)^{-m}\theta_{A\dots F}.
\end{equation*}
This gives the desired relations
\begin{align*}
(\mathcal{G}_{2s}\zeta)_{A\dots F}&=
(\stwist_{2s-2}\mathcal{F}_{2s-2}\sdiv_{2s}\theta)_{A\dots F}
(-2)^{-m}(\mathcal{G}_{2s}\theta)_{A\dots F}\nonumber\\
&=(\Delta^m_{2s}\theta)_{A\dots F}=\varphi_{A\dots F},\\
\Vert\zeta_{A\dots F}\Vert_{j+2s-1,\delta+2s-1}&\leq 
C(\Vert\theta_{A\dots F}\Vert_{j+2m,\delta+2m}
+ \Vert (\stwist_{2s-2}\mathcal{F}_{2s-2}\sdiv_{2s}\theta)_{A\dots F}\Vert_{j,\delta})\nonumber\\
&\leq C\Vert\varphi_{A\dots F}\Vert_{j,\delta}.
\end{align*}
\end{proof}

\section{Estimates for solutions of the scalar wave equation with initial data with arbitrary weight} \label{sec:estsol}

This section contains complementary results for the study of the decay of the solution of the wave equation for the Cauchy problem such as the one stated in \cite{Klainerman:1985wn,Klainerman:1986wra} (using the vector field method), in \cite{MR862696} (using the integral representation), and \cite{DAncona:2001kr} (using Strichartz estimates). The purpose is to link precisely the asymptotic behavior of the initial data at $i^0$ and the asymptotic behavior in the future region $t\geq 0$.

It is important to remark that these different decay results for the wave equation hold for different regularities of the initial data $(f,g)$. In all the aforementioned results, the limiting factor is the use of the Sobolev embedding from $H^2$ into $L^\infty$, and more precisely the way it is used. If the Sobolev embedding is used at the level of both $f$ and $g$, we need $(f, g) \in H^j_\delta \times H^{j-1}_{\delta-1}$ for $j \geq 3$. This phenomenon occurs for instance in the work of Asakura \cite{MR862696}, who is relies on the integral representation of solutions. Energy methods can be used to give a result with weaker regularity assumtions. The Klainerman-Sobolev inequality \cite[Theorem 1]{Klainerman:1987hz} yields a decay estimate with weight $\delta=-3/2$ for $j=2$. Further, using the conformal compactification of Minkowski space to a subset of the Einstein cylinder, and the conformal transformation properties of the wave equation, gives using standard estimates for the wave equation in the Einstein cylinder the decay result for $\delta = -2$ and $j=2$, cf. \cite[Section 6.7]{MR1466700}. It appears to be an open problem to prove the corresponding decay result with $j=2$ for general weight $\delta$.

In this section we shall prove estimates for general weights $\delta$. Although, for $\delta > 0$, these are not actually \emph{decay} estimates, it will be convenient to refer to them using this term. It goes without saying that the most important applications are those with $\delta < 0$. In the following, we shall consider the Cauchy problem
\begin{equation} \label{cauchywave} \left\{
\begin{array}{l}
\square \phi =0 , \\
\phi|_{t=0}=f \in H^j_{\delta}(\R^3, \C) , \\ 
\partial_t \phi|_{t=0}=g \in H^{j-1}_{\delta-1}(\R^3,\C).
\end{array}\right.
\end{equation}

The following representation formula then holds (\cite{Evans:2010wj} on flat space-time or \cite[Theorem 5.3.3]{Friedlander:1975ub} for arbitrary curved background).
\begin{lemma}\label{integralwave}
 The solution of the Cauchy problem \eqref{cauchywave} is given by the representation formula
$$
\phi(t,x) = \frac{1}{4\pi} \left(\int_{\mathbb{S}^2}t\left(g(x+t\omega) + \partial_\omega f (x+t\omega)\right) + f(x+t\omega)   \ud \mu_{\mathbb{S}^2}\right),
$$
where $\mathbb{S}^2$ is the unit 2-sphere and $\partial_\omega$ is the derivative in the unit outer normal direction $\omega$ to $\mathbb{S}^2$.
\end{lemma}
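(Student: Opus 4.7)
The plan is to derive Kirchhoff's formula via the classical method of spherical means. For a continuous function $h : \R^3 \to \C$ and $r>0$ define the spherical mean
\[
M_h(x, r) = \frac{1}{4\pi}\int_{\mathbb{S}^2} h(x + r\omega)\, \ud \mu_{\mathbb{S}^2},
\]
and set $M(t,r) = M_{\phi(t,\cdot)}(x,r)$ for the solution $\phi$ of \eqref{cauchywave}. A standard computation using the divergence theorem on balls of radius $r$ together with $\square\phi=0$ yields the Euler--Poisson--Darboux equation $\partial_t^2 M = \partial_r^2 M + \tfrac{2}{r}\partial_r M$; equivalently, setting $U(t,r) := r M(t,r)$, one obtains the 1D wave equation $\partial_t^2 U = \partial_r^2 U$ for $r>0$, with Dirichlet boundary condition $U(t,0)=0$ and initial data $U(0,r) = r M_f(x,r)$, $\partial_t U(0,r) = r M_g(x,r)$.

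Next I would extend these initial data as odd functions across $r=0$ (which is compatible with the Dirichlet condition) and invoke d'Alembert's formula to get
\[
U(t,r) = \tfrac{1}{2}\bigl((t+r) M_f(x,t+r) - (t-r) M_f(x, |t-r|)\bigr) + \tfrac{1}{2}\int_{t-r}^{t+r} s\, M_g(x, |s|)\, \ud s
\]
for $r>0$. Since $\phi(t,x) = \lim_{r\to 0^+} M(t,r) = \lim_{r\to 0^+} U(t,r)/r$, evaluating this limit (using that $M_f(x,r), M_g(x,r) \to f(x), g(x)$ as $r\to 0^+$) produces the compact form
\[
\phi(t,x) = \frac{\ud}{\ud t}\bigl(t M_f(x,t)\bigr) + t M_g(x,t).
\]

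Finally, I would expand $\frac{\ud}{\ud t}(t M_f(x,t)) = M_f(x,t) + t\,\partial_t M_f(x,t)$ and differentiate under the integral sign to obtain
\[
\partial_t M_f(x,t) = \frac{1}{4\pi}\int_{\mathbb{S}^2}\omega\cdot\nabla f(x+t\omega)\,\ud\mu_{\mathbb{S}^2} = \frac{1}{4\pi}\int_{\mathbb{S}^2} \partial_\omega f(x+t\omega)\,\ud\mu_{\mathbb{S}^2}.
\]
Combining the three contributions yields precisely the stated formula. The one mildly technical step is the verification of the Euler--Poisson--Darboux equation assuming enough regularity of $\phi$, but this is entirely classical and raises no real obstacle; everything else reduces to d'Alembert's formula and a straightforward limit as $r\to 0^+$.
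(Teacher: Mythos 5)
Your derivation is correct: it is the classical spherical-means/Euler--Poisson--Darboux argument leading to Kirchhoff's formula, and expanding $\partial_t\bigl(tM_f(x,t)\bigr)+tM_g(x,t)$ does give exactly the stated integral with the $\partial_\omega f$ term. The paper does not prove this lemma at all but simply cites Evans and Friedlander, and your argument is precisely the proof given in those references, so there is nothing to reconcile beyond the regularity caveat you already flag (which is handled by density, since the data lie in $H^j_\delta$ with $j\geq 3$).
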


Making use of the representation formula stated in Lemma~\ref{integralwave}, and of Proposition~\ref{prop:decaysob}, gives the following result.
\begin{proposition}\label{prop:decaywave} Let $j\geq 3$ and $\delta$ in $\R$, and let $\phi$ be a solution to the Cauchy problem \eqref{cauchywave}. The following inequality holds in the region $t > 0$. 
\begin{equation*}
 |\phi(t,x)|\leq  C \left(\Vert f\Vert_{3,\delta}+\Vert g\Vert_{2,\delta-1}\right)
\begin{cases}
 \lAngle v\rAngle^{-1}\lAngle u\rAngle^{1+\delta} &\text{ if } \delta <-1 , \\ 
 \dfrac{\log\lAngle v \rAngle-\log\lAngle u \rAngle}{\lAngle v \rAngle -\lAngle u \rAngle} &\text{ if } \delta =-1 , \\ 
 \lAngle v\rAngle^{\delta} &\text{ if } \delta >-1 .
\end{cases}
\end{equation*}
If, furthermore, $(k,l,m)$ is a triple of non-negative integers, $j\geq 3+k+l+m$, the following pointwise inequality holds, for all $t>0$ and $r>1$,
\begin{equation*}
| \partial_v^k\partial_u^l\snabla^m\phi |\leq C \left( \Vert f\Vert_{3+k+l+m, \delta} + \Vert g\Vert_{2+k+l+m, \delta-1}\right)
\begin{cases}
 \lAngle u\rAngle^{1+\delta-l} \lAngle v\rAngle^{-1-k-m} &\text{ if } \delta<l-1\\ 
\dfrac{\log\lAngle v \rAngle-\log\lAngle u \rAngle}{\lAngle v \rAngle^{l+m}\left(\lAngle v \rAngle -\lAngle u \rAngle\right)} &\text{ if } \delta = l-1\\ 
 \lAngle v\rAngle^{\delta -l-m-k} &\text{ if } \delta>l-1,
\end{cases}
\end{equation*}
where $\partial_u = \frac{1}{2} (\partial_t-\partial_r)$ and $ \partial_v = \frac{1}{2} (\partial_t+\partial_r)$ are respectively the outgoing and ingoing null directions.
\end{proposition}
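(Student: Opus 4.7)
The plan is to combine the Sobolev embedding of Proposition~\ref{prop:decaysob} with the representation formula of Lemma~\ref{integralwave} and to reduce the resulting spherical integrals to one-dimensional integrals in $\rho=|x+t\omega|$. By the Sobolev embedding we have the pointwise bounds
$$|f(y)|\leq C\lAngle|y|\rAngle^{\delta}\|f\|_{2,\delta},\quad |g(y)|\leq C\lAngle|y|\rAngle^{\delta-1}\|g\|_{2,\delta-1},\quad |\nabla f(y)|\leq C\lAngle|y|\rAngle^{\delta-1}\|f\|_{3,\delta}.$$
Since these bounds depend only on $|y|$, after majorization the integrand on the sphere of radius $t$ centered at $x$ becomes a function only of $\rho=|x+t\omega|$. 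Using spherical coordinates with polar axis along $x$, $\rho^2=r^2+t^2+2rt\cos\theta$, so $\sin\theta\,d\theta=-\rho\,d\rho/(rt)$ and $\rho$ sweeps $[|u|,v]$. This yields
$$\int_{\mathbb{S}^2}\Phi(|x+t\omega|)\,d\mu_{\mathbb{S}^2}=\frac{2\pi}{rt}\int_{|u|}^{v}\Phi(\rho)\,\rho\,d\rho,$$
and substituting this into Lemma~\ref{integralwave} reduces the estimate of $\phi$ to bounding the one-dimensional integrals $I_\alpha(u,v)=\int_{|u|}^{v}\lAngle\rho\rAngle^{\alpha}\,d\rho$ with $\alpha\in\{\delta,\delta-1\}$.

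The analysis of $I_\alpha$ splits into three regimes. For $\alpha<-1$ the integral is controlled by the lower endpoint, giving $I_\alpha\leq C\lAngle u\rAngle^{1+\alpha}$; for $\alpha=-1$ it equals $\log\lAngle v\rAngle-\log\lAngle u\rAngle$ up to constants; and for $\alpha>-1$ it is controlled by the upper endpoint, $I_\alpha\leq C\lAngle v\rAngle^{1+\alpha}$. Combining with the prefactor $1/r$ coming from $\frac{t}{rt}=\frac{1}{r}$ (and from $\frac{1}{rt}\cdot v\lesssim\frac{1}{r}$ since $\rho\leq v$ in the integrand for the $f$-term), and using $r\sim\lAngle v\rAngle$ in the wave zone, gives the three bounds claimed in the statement. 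The region $r\leq 1$ is handled by direct continuity: there $|u|\sim v\sim\lAngle t\rAngle$, so $\frac{1}{r}\int_{|u|}^{v}\lAngle\rho\rAngle^\alpha\,d\rho\leq C\lAngle t\rAngle^\alpha$, which is consistent with each of the three branches after applying $\lAngle u\rAngle\sim\lAngle v\rAngle\sim\lAngle t\rAngle$ in this region.

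For the derivative bounds I would differentiate the reduced representation of $\phi$ obtained in the first paragraph. Since $\partial_v$ acts on the upper integration limit and $\partial_u$ on the lower one, each application of $\partial_v$ produces a boundary term of the form $\lAngle v\rAngle^\alpha/r$, contributing an extra factor $\lAngle v\rAngle^{-1}$, while each $\partial_u$ produces a boundary term $\lAngle u\rAngle^\alpha/r$, shifting the weight $\delta$ by $-1$ in the $u$-slot; iterating explains the asymmetric exponents $\lAngle u\rAngle^{1+\delta-l}\lAngle v\rAngle^{-1-k}$. Spherical derivatives are handled via the identity $\snabla=\Omega/r$, where $\Omega$ is a rotation vector field: since the rotation vector fields commute with the wave operator and with the evaluation on the sphere of radius $t$, each $\snabla$ costs an additional factor $1/r\sim\lAngle v\rAngle^{-1}$ applied to an integrand of the same type, producing the factor $\lAngle v\rAngle^{-m}$. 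The weight condition $\delta<l-1$, $\delta=l-1$, or $\delta>l-1$ arises because after $l$ applications of $\partial_u$ the effective weight in the lower-endpoint-dominated integral is $\delta-l$.

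The main technical obstacle is not the sphere-to-radial reduction itself but the bookkeeping near $r=0$ and in the exterior region $r\geq t$, where the change of variables Jacobian $1/(rt)$ is not uniformly comparable to $1/\lAngle v\rAngle^2$; this is resolved by splitting into $r\leq 1$ (use continuity and the hypothesis $j\geq 3$) and $r\geq 1$ (where $r\sim\lAngle v\rAngle$). A secondary difficulty is keeping track of the boundary contributions in the derivative estimate so as not to lose the gain $\lAngle v\rAngle^{-1}$ from each $\partial_v$: one must use the structure $\phi=\partial_t(tM_f)+tM_g$ of the Kirchhoff formula together with the identity $v\,\partial_v=\partial_v(v\cdot)-\mathrm{Id}$ to keep the endpoint terms balanced against the prefactors.
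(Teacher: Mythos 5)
Your zeroth-order estimate is sound and follows essentially the paper's route: Sobolev embedding for the data, the Kirchhoff representation formula, and the reduction of the spherical mean of $\lAngle|x+t\omega|\rAngle^{\alpha}$ to the one-dimensional integral $\frac{1}{rt}\int_{|u|}^{v}\lAngle\rho\rAngle^{\alpha}\rho\,\ud\rho$ is exactly the content of the paper's Lemma~\ref{integralextimateshypergeom}; your three-regime analysis of $I_\alpha$ matches the paper's discussion of the function $F_\kappa$ and the subsequent comparison table.

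The derivative estimates, however, contain a genuine gap. The expression $\frac{2\pi}{rt}\int_{|u|}^{v}\Phi(\rho)\rho\,\ud\rho$ is an \emph{upper bound} obtained after majorizing $|f|,|g|$ by radial functions of $\rho$; it is not a representation of $\phi$, so you cannot differentiate it in $u$ and $v$ and read off boundary terms at the endpoints. For general (non-radial) data, applying $\partial_v=\tfrac12(\partial_t+\partial_r)$ to the Kirchhoff formula produces spherical means of one more derivative of the data (in the directions $\omega$ and $x/|x|$, which do not align into an endpoint derivative of a one-dimensional integral), and the resulting naive bound for $\delta<0$ is $\lAngle v\rAngle^{-1}\lAngle u\rAngle^{\delta}$ --- the \emph{same} as for $\partial_u\phi$, losing a power of $\lAngle u\rAngle$ rather than gaining the crucial extra factor $\lAngle v\rAngle^{-1}$. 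The asymmetry between the exponents $\lAngle u\rAngle^{1+\delta-l}$ and $\lAngle v\rAngle^{-1-k}$ therefore cannot be extracted from direct differentiation of the representation formula alone. The paper obtains it by commuting with the scaling vector field $S=u\partial_u+v\partial_v$, which satisfies $[S,\square]=-2\square$, so that $S\phi$ again solves the wave equation with data in $H^{j-1}_{\delta}\times H^{j-2}_{\delta-1}$ (same weights); applying the zeroth-order estimate to $S\phi$ and writing $\partial_v\phi=\frac{1}{v}\left(S\phi-u\,\partial_u\phi\right)$ then yields the gain of $\lAngle v\rAngle^{-1}$, while the rotation generators handle $\snabla$ as you propose, and a recursion over the number of derivatives closes the argument. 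Your closing remark about $v\,\partial_v=\partial_v(v\cdot)-\mathrm{Id}$ gestures in this direction but does not supply the commutation argument that actually produces the improved $v$-decay.
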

\begin{remark}\label{rem:decaywave}
\begin{enumerate}
\item  For $\delta=-1$, it is important to notice that the following inequalities hold
$$
 \dfrac{\log\lAngle v \rAngle-\log\lAngle u \rAngle}{\lAngle v \rAngle -\lAngle u \rAngle} \leq \dfrac{1}{\lAngle u\rAngle } \text{ and }  \dfrac{\log\lAngle v \rAngle-\log\lAngle u \rAngle}{\lAngle v \rAngle -\lAngle u \rAngle} \leq C \dfrac{\log\lAngle v \rAngle}{\lAngle v \rAngle}.
$$
As a consequence, in the interior region $t>3r$, the decay result for $\phi$ is 
$$
\left|\phi(t,x)\right| \leq \dfrac{\widetilde C}{\lAngle v \rAngle },
$$
and, in the exterior region $3r>t >\frac{r}{3}$, 
$$
\left|\phi(t,x)\right| \leq \widetilde C\dfrac{\log\lAngle v \rAngle}{\lAngle v \rAngle }.
$$
The constant $\widetilde C$ depends on the norms of $f$ and $g$ as above.
\item It is important to note that, in the interior region, $\lAngle u \rAngle$ and $\lAngle v \rAngle$ are equivalent, and, as a consequence, the general estimate holds, for all weight $\delta$: 
\begin{equation*}
| \partial_v^k\partial_u^l\snabla^m\phi|\leq  \widetilde C  \lAngle v\rAngle^{\delta -l-m-k}.
\end{equation*}
\end{enumerate}
\end{remark}

For the proof we will need some integral estimates.
\begin{lemma}\label{integralextimateshypergeom}
For any $\delta$ in $\mathbb{R}$, we have the following integral estimates, for all $t>0$ and $x$ in $\R^3$,
$$
\int_{\mathbb{S}^2}\lAngle |x+t\omega|\rAngle^{\delta} \ud\mu_{\mathbb{S}^2}\leq 8\pi \max\left(1, \dfrac{1}{|2+\delta|}\right)\dfrac{\max\left(\lAngle u\rAngle^{\delta+2}, \lAngle v\rAngle^{\delta+2} \right)}{\lAngle v\rAngle \left(\lAngle u\rAngle  + \lAngle v\rAngle\right)} \text{ for }\delta\neq -2
$$
and
$$
\int_{\mathbb{S}^2}\lAngle |x+t\omega|\rAngle^{\delta} \ud\mu_{\mathbb{S}^2} = 8\pi\dfrac{\log \left(\dfrac{\lAngle u\rAngle}{\lAngle v\rAngle} \right)}{(\lAngle u\rAngle^2 -\lAngle v\rAngle^2)} \text{ for } \delta=-2
$$
\end{lemma}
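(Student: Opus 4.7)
The plan is a coarea reduction to a one-dimensional integral in $\rho = |x+t\omega|$, followed by explicit evaluation and elementary case analysis. First, I would parameterize $\mathbb{S}^2$ by the angle $\theta$ that $\omega$ makes with $x$ (the degenerate case $x=0$ is a separate trivial verification), so that $|x+t\omega|^2 = r^2 + t^2 + 2rt\cos\theta$ with $r=|x|$. The substitution $\rho = |x+t\omega|$ satisfies $\rho\,d\rho = -rt\sin\theta\,d\theta$, and $\rho$ sweeps $[|u|,v]$ (where $u = t-r$, $v = t+r$) as $\theta$ runs through $[0,\pi]$. Integrating out the trivial azimuthal angle yields
$$
\int_{\mathbb{S}^2}\lAngle |x+t\omega|\rAngle^{\delta}\,\ud\mu_{\mathbb{S}^2} = \frac{2\pi}{rt}\int_{|u|}^{v}\lAngle \rho\rAngle^{\delta}\rho\,\ud\rho.
$$

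The integrand has primitive $\tfrac{1}{\delta+2}\lAngle \rho\rAngle^{\delta+2}$ for $\delta\neq -2$ and $\log\lAngle\rho\rAngle$ for $\delta=-2$. Combined with the identity $4rt = v^2 - u^2 = \lAngle v\rAngle^2 - \lAngle u\rAngle^2 = (\lAngle v\rAngle - \lAngle u\rAngle)(\lAngle v\rAngle + \lAngle u\rAngle)$, this gives the stated exact closed form in the case $\delta=-2$ immediately. For $\delta\neq -2$, after cancelling a common factor $\lAngle v\rAngle + \lAngle u\rAngle$, what remains to be shown is
$$
\left| \frac{\lAngle v\rAngle^{\delta+2} - \lAngle u\rAngle^{\delta+2}}{\lAngle v\rAngle - \lAngle u\rAngle} \right| \leq \max(|\delta+2|,1)\, \frac{\max(\lAngle u\rAngle^{\delta+2}, \lAngle v\rAngle^{\delta+2})}{\lAngle v\rAngle}.
$$

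Setting $y = \lAngle v\rAngle/\lAngle u\rAngle \geq 1$, this reduces to a one-variable inequality that I would treat in four cases. For $\delta\geq -1$, the mean value theorem together with the monotonicity $\xi^{\delta+1}\leq \lAngle v\rAngle^{\delta+1}$ on $[\lAngle u\rAngle,\lAngle v\rAngle]$ suffices directly. For $-2<\delta<-1$ the inequality collapses to $y^{\delta+1}\leq 1$, and for $-3<\delta<-2$ it collapses to $y^{\delta+3}\geq 1$; both hold by the sign of the exponent and $y\geq 1$. For $\delta\leq -3$, the inequality becomes $y - y^{\delta+3} \leq |\delta+2|(y-1)$, which I would prove by observing that the two sides and their first derivatives coincide at $y=1$, and that the second derivative of the difference is $(\delta+2)(\delta+3)y^{\delta+1}\geq 0$ on $[1,\infty)$ (both factors being non-positive), so the inequality follows from convexity.

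The main subtlety is tracking where the prefactor $\max(1,1/|\delta+2|)$ is genuinely necessary: the factor $1/|\delta+2|$ must appear in the range $-3<\delta<-1$, $\delta\neq -2$, reflecting the singular behaviour of the primitive $\lAngle\rho\rAngle^{\delta+2}/(\delta+2)$ near $\delta=-2$, which is however compensated by the simultaneous near-vanishing of $\lAngle v\rAngle^{\delta+2} - \lAngle u\rAngle^{\delta+2}$. Outside this range, a uniform constant of $1$ suffices, and the transition at $\delta = -3$ (where the bound saturates) is consistent with the convexity borderline $f''(y) = 0$.
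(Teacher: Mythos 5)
Your proof is correct and follows essentially the same route as the paper: both evaluate the spherical integral exactly in closed form (your substitution $\rho=|x+t\omega|$ is equivalent to the paper's direct integration in $\theta$, yielding the same expression $8\pi\bigl(\lAngle u\rAngle^{2+\delta}-\lAngle v\rAngle^{2+\delta}\bigr)/\bigl((2+\delta)(\lAngle u\rAngle^{2}-\lAngle v\rAngle^{2})\bigr)$), and then reduce the bound to the same one-variable inequality in $y=\lAngle v\rAngle/\lAngle u\rAngle$. The only difference is in how that elementary inequality is verified: the paper introduces $F_\kappa(z)=(1-z^{\kappa})/(\kappa(1-z))$ and quotes its monotonicity and the bounds $\min(1,\kappa^{-1})\leq F_\kappa\leq\max(1,\kappa^{-1})$ on $(0,1]$ without proof, whereas your mean-value-theorem and convexity case analysis proves the equivalent facts explicitly, which is arguably more self-contained.
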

\begin{remark}
\begin{enumerate}
\item For $\delta \neq -2$, the upper bounds become lower bounds if one replaces  $\max\left(1, \dfrac{1}{|2+\delta|}\right)$ by  $\min\left(1, \dfrac{1}{|2+\delta|}\right)$. 
\item  It is important to notice that, for $\delta \in (-2,-1]$, the estimate
$$
\int_{\mathbb{S}^2}\lAngle |x+t\omega|\rAngle^{\delta} \ud \mu_{\mathbb{S}^2} \leq  C \lAngle v\rAngle ^\delta
$$
is \emph{stronger} than 
$$
\int_{\mathbb{S}^2}\lAngle |x+t\omega|\rAngle^{\delta} \ud \mu_{\mathbb{S}^2} \leq  C \dfrac{\lAngle u\rAngle ^{1+\delta}}{\lAngle v\rAngle}
$$
since 
$$
\lAngle v \rAngle^\delta  = \lAngle u \rAngle ^{1+\delta} \lAngle v \rAngle^{-1} \lAngle v \rAngle^{\delta+1}\lAngle u \rAngle ^{-(1+\delta)}\leq \lAngle u \rAngle ^{1+\delta} \lAngle v \rAngle^{-1}
$$
(since $\delta+1<0$). It should also be noted that this result agrees with the estimate stated by Asakura in \cite{MR862696}. His assumptions on the initial data
$$
f = \mathcal{O}(\lAngle r\rAngle^\delta), g = \mathcal{O}(\lAngle r\rAngle^\delta),
$$
as well as their derivatives, implies that the integral $\int_{\mathbb{S}^2}\lAngle |x+t\omega|\rAngle^{\delta}\ud\mu_{\mathbb{S}^2}$ gives the asymptotic behavior of the solutions of the linear wave equation.
\end{enumerate}
\end{remark}

\begin{proof}
Let $(t,x)$ be fixed and consider the sphere $S(x,t)$ with center $x$ and radius $t$. Let $q$ be a point on the sphere $S(x,t)$. The coordinates of $q$ are then given by $(\theta,\phi)$ defined by:
\begin{itemize}
 \item in the 2-plane containing the origin $o$, the point $x$ and $q$, $\theta$ is the oriented angle
$$
\theta = (\vec{xo}, \vec{xq}) \in (0,\pi);
$$
\item in the plane orthogonal to $\vec{ox}$ and passing through $x$, one chooses a direction of origin. The direction of the orthogonal projection of $\vec{xq}$ on this plane is labeled by an angle $\phi$ belonging to $(0,2\pi)$. 
\end{itemize}
The integral can now be rewritten as
\begin{align*}
\int_{\mathbb{S}^2}\lAngle |x+t\omega|\rAngle^{\delta} \ud \mu_{\mathbb{S}^2} ={}& \int_0^{2\pi}\int_0^{\pi}(1+r^2 +t^2-2tr\cos\theta)^{\delta/2}\sin\theta\ud \theta\ud \phi \\
={}&
\begin{cases}
8\pi\dfrac{\lAngle u\rAngle^{2+\delta}-\lAngle v\rAngle^{2+\delta}}{(2+\delta)(\lAngle u\rAngle^2 -\lAngle v\rAngle^2)} &\text{ if } \delta \neq -2 \text{ and } \lAngle u\rAngle\neq \lAngle v\rAngle,\\
8\pi\dfrac{\log \left(\frac{\lAngle u\rAngle}{\lAngle v\rAngle} \right)}{(\lAngle u\rAngle^2 -\lAngle v\rAngle^2)} &\text{ if } \delta=-2 \text{ and } \lAngle u\rAngle\neq \lAngle v\rAngle,\\  
4\pi \lAngle v\rAngle^{\delta} &\text{ if } \lAngle u\rAngle = \lAngle v\rAngle.\\  
\end{cases}
\end{align*}
We note that $\lAngle u\rAngle$ and $\lAngle v \rAngle$ always satisfy
\begin{itemize}
\item $\lAngle u\rAngle\geq 1$ and $\lAngle v\rAngle\geq 1$;
\item for $t\geq 0$, $\lAngle u\rAngle\leq \lAngle v\rAngle$.
\end{itemize}

Let $F_\kappa$ denote the function defined, for $\kappa\geq 0$ and $z$ in $(0,1]$,
$$F_\kappa(z)=
\begin{cases}
\dfrac{1-z^{\kappa}}{\kappa(1-z)} & \text{ if } \kappa > 0 \text{ and } z\neq 1,\\
\dfrac{-\log z}{1-z} &\text{ if } \kappa = 0 \text{ and } z\neq 1,\\  
1 &\text{ if } z=1.
\end{cases}
$$
For $\kappa>0$, $F_\kappa$ is a continuous monotonic non-negative function on $(0,1]$. It is bounded from above by $C_\kappa = \max(1, \kappa^{-1})$ and from below by $c_\kappa = \min(1, \kappa^{-1})$ on $(0,1]$. For all $\delta$, the following identities hold,
$$
\dfrac{\lAngle u\rAngle^{2+\delta}-\lAngle v\rAngle^{2+\delta}}{(2+\delta)(\lAngle u\rAngle^2 -\lAngle v\rAngle^2)} = 
\begin{cases}
\dfrac{\lAngle u\rAngle^{\delta+2}}{\lAngle v\rAngle \left( \lAngle v\rAngle+\lAngle u\rAngle\right)}F_{-\delta-2}\left( \dfrac{\lAngle u\rAngle}{\lAngle v\rAngle}\right) & \text{ if } \delta+2<0.\\
\dfrac{\lAngle v\rAngle^{\delta+1}}{\lAngle v\rAngle+\lAngle u\rAngle}F_{\delta+2} \left(\dfrac{\lAngle u\rAngle}{\lAngle v\rAngle}\right) & \text{ if } \delta+2\geq 0. 
\end{cases}
$$
Using the bounds on $F_\kappa$, one gets
\begin{align*}
\dfrac{\lAngle u\rAngle^{2+\delta}-\lAngle v\rAngle^{2+\delta}}{(2+\delta)(\lAngle u\rAngle^2 -\lAngle v\rAngle^2)} &\leq 
\begin{cases}
C_{-\delta-2}\dfrac{\lAngle u\rAngle^{\delta+2}}{\lAngle v\rAngle \left( \lAngle v\rAngle+\lAngle u\rAngle\right)}& \text{ if } \delta<-2\\
C_{\delta+2}\dfrac{\lAngle v\rAngle^{\delta+1}}{\lAngle v\rAngle+\lAngle u\rAngle} & \text{ if } \delta>-2,
\end{cases}\\
\intertext{ and }\dfrac{\lAngle u\rAngle^{2+\delta}-\lAngle v\rAngle^{2+\delta}}{(2+\delta)(\lAngle u\rAngle^2 -\lAngle v\rAngle^2)} &\geq
\begin{cases} 
c_{-\delta-2}\dfrac{\lAngle u\rAngle^{\delta+2}}{\lAngle v\rAngle \left( \lAngle v\rAngle+\lAngle u\rAngle\right)}& \text{ if } \delta<-2\\
c_{\delta+2}\dfrac{\lAngle v\rAngle^{\delta+1}}{\lAngle v\rAngle+\lAngle u\rAngle} & \text{ if } \delta>-2.
\end{cases}
\end{align*}
\end{proof}

\begin{remark} These inequalities actually provide us with \emph{global} estimates for the solution of the wave equation, that is to say inequalities valid both on the exterior and the interior regions.
\end{remark}

\begin{proof}[Proof of Proposition~\ref{prop:decaywave}] Using Proposition~\ref{prop:decaysob}, one knows that if $f\in H^j_{\delta}$, $g\in H^{j-1}_{\delta-1}$, $j\geq 2+n$ and $j\geq 3+m$, there is constant $C$ such that
\begin{eqnarray*}
 |D^n f(y)| & \leq & C\lAngle y\rAngle^{\delta-n}\Vert f\Vert_{2+n,\delta}\\
|D^m g(y)| & \leq & C\lAngle y\rAngle^{\delta-1-m}\Vert g\Vert_{2+m,\delta-1}.
\end{eqnarray*}
Using the representation formula stated in Lemma \ref{integralwave}, one gets immediately
  \begin{gather*}
   |\phi(t,x)|\leq C\left(\Vert f\Vert_{3,\delta}+\Vert g\Vert_{2,\delta-1}\right)\int_{\mathbb{S}^2}\left(\lAngle|x+t\omega|\rAngle^{\delta} +t\lAngle|x+t\omega|\rAngle^{\delta-1}\right)\ud \mu_{\mathbb{S}^2}.
  \end{gather*}
We can use the estimate $t\leq (\lAngle u\rAngle^2+\lAngle v\rAngle^2)^{1/2}$ and Lemma~\ref{integralextimateshypergeom} to obtain global estimates for solutions of the wave equation. We compare the contributions from $f$ in $H^j_{\delta}$ and $g$ in $H^{j-1}_{\delta-1}$ in the following table. Its first column is the range of weights $\delta$ considered. The second column contains the asymptotic behavior of
$$
\int_{\mathbb{S}^2}\lAngle|x+t\omega|\rAngle^{\delta}\ud \mu_{\mathbb{S}^2}
$$
coming from $f$ in $H^j_{\delta}$. The third column gives the behaviour of
$$
\int_{\mathbb{S}^2} t\lAngle|x+t\omega|\rAngle^{\delta-1} \ud \mu_{\mathbb{S}^2}
$$
coming from $\partial f$ and $g$ in $H^{j-1}_{\delta-1}$. The estimates stated in these columns are written in such a way that the first factor gives the estimate which is multiplied by bounded quantities, except when considering logarithmic terms. Finally, the last column gives the estimate for the full solution of the wave equation obtained by summing the previous integrals. We use the function $F_0$ defined by
$$
F_{0}(z)=\frac{-\log(z)}{1-z} \text{ which satisfies } F_0(z)\leq z^{-1} \text{ for all } 0\leq z\leq 1. 
$$
\begin{equation*}
\begin{array}{|c|c|c|c|}
\hline
&f & t(\partial_\omega f+g) & f+t\partial_\omega f+tg\\
\hline
\delta<-2 & \dfrac{\lAngle u \rAngle ^{\delta+1}}{\lAngle v\rAngle}\cdot \dfrac{\lAngle u \rAngle}{( \lAngle v\rAngle+\lAngle u\rAngle)} & \dfrac{\lAngle u \rAngle ^{\delta+1}}{\lAngle v\rAngle }\cdot \dfrac{t}{\lAngle v\rAngle+\lAngle u\rAngle} & \dfrac{\lAngle u \rAngle^{\delta+1} }{\lAngle v\rAngle} \\
\hline
\delta=-2 & \dfrac{1}{{\lAngle u\rAngle+\lAngle v\rAngle}}\cdot \dfrac{F_0\left(\tfrac{\lAngle u\rAngle }{\lAngle v\rAngle}\right) }{\lAngle v\rAngle}  & \dfrac{1}{\lAngle u \rAngle\lAngle v\rAngle}\cdot\dfrac{t}{(\lAngle v\rAngle+\lAngle u\rAngle)} & \dfrac{\lAngle u \rAngle^{\delta+1} }{\lAngle v\rAngle}\\
\hline
\delta=-1 & \lAngle v\rAngle^\delta \cdot \dfrac{\lAngle v\rAngle}{\lAngle v\rAngle+\lAngle u\rAngle} & \dfrac{F_0\left(\tfrac{\lAngle u\rAngle }{\lAngle v\rAngle}\right) }{\lAngle v\rAngle} \cdot \dfrac{t}{{\lAngle u\rAngle+\lAngle v\rAngle}} &  \dfrac{F_0\left(\tfrac{\lAngle u\rAngle }{\lAngle v\rAngle}\right) }{\lAngle v\rAngle} 
\\

\hline

-2<\delta<-1 & \lAngle v\rAngle^{\delta}\cdot \dfrac{\lAngle v\rAngle}{\lAngle v\rAngle+\lAngle u\rAngle} & \dfrac{\lAngle u \rAngle^{\delta+1} }{\lAngle v\rAngle}\cdot \dfrac{t}{(\lAngle v\rAngle+\lAngle u\rAngle)} & \dfrac{\lAngle u \rAngle^{\delta+1} }{\lAngle v\rAngle}\\

\hline

\hline
\delta>-1 & \lAngle v\rAngle^{\delta}\cdot \dfrac{\lAngle v\rAngle}{\lAngle v\rAngle+\lAngle u\rAngle} & \lAngle v\rAngle^{\delta}\cdot \dfrac{t}{\lAngle v\rAngle+\lAngle u\rAngle} & \lAngle v \rAngle ^\delta\\
\hline
\end{array}
\end{equation*}

As a consequence, the following pointwise estimate holds for $\phi$: for all $t\geq 0$ and $x$ in $\R^3$, 
\begin{equation}\label{ineqwave}
|\phi(t,x)|\leq C\left(\Vert f\Vert_{3,\delta}+\Vert g\Vert_{2,\delta-1}\right)\begin{cases}
\lAngle v\rAngle^{-1}\lAngle u\rAngle^{1+\delta}&\text{ for }\delta <-1\\
\dfrac{\log\lAngle v\rAngle- \log\lAngle u\rAngle}{\lAngle v\rAngle-\lAngle u\rAngle} &\text{ for }\delta =-1\\
\lAngle v\rAngle^\delta&\text{ for }\delta >-1
\end{cases}.
\end{equation}

The same process can be applied to the derivatives of $\phi$ in the direction of $u$ and $v$. The integral representations of the derivatives are then
\begin{eqnarray*} 
 \partial_v \phi (t,x) &=&\frac{1}{8\pi}\int_{\mathbb{S}^2}\big(t\left(\partial_r g(x+t\omega)+\partial_\omega g(x+t\omega)\right)+ g(x+t\omega) \\ 
 && + t\left(\partial_r\partial_\omega f(x+t\omega)+\partial^2_\omega f(x+t\omega)\right)+\partial_rf(x+t\omega)+2\partial_\omega f(x+t\omega)\big)\ud \mu_{\mathbb{S}^2},\\
 \partial_u \phi (t,x) &=&\frac{1}{8\pi}\int_{\mathbb{S}^2}\big(t\left(-\partial_r g(x+t\omega)+\partial_\omega g(x+t\omega)\right)+ g(x+t\omega) \\ 
 && + t\left(-\partial_r\partial_\omega f(x+t\omega)+\partial^2_\omega f(x+t\omega)\right)-\partial_rf(x+t\omega)+2\partial_\omega f(x+t\omega)\big)\ud \mu_{\mathbb{S}^2}.
\end{eqnarray*}
Using Sobolev embeddings, one gets immediately
\begin{eqnarray*}
 |\partial_v \phi (t,x)| &\leq & C\left(\Vert f\Vert_{4,\delta}+\Vert g\Vert_{3,\delta-1}\right)\int_{\mathbb{S}^2}\left(\lAngle |x+t\omega|\rAngle^{\delta-1} +t\lAngle |x+t\omega|\rAngle^{\delta-2}\right)\ud \mu_{\mathbb{S}^2},\\
 |\partial_u \phi (t,x)| &\leq &C\left(\Vert f\Vert_{4,\delta}+\Vert g\Vert_{3,\delta-1}\right)\int_{\mathbb{S}^2}\left(\lAngle |x+t\omega|\rAngle^{\delta-1} +t\lAngle |x+t\omega|\rAngle^{\delta-2}\right)\ud \mu_{\mathbb{S}^2}.
\end{eqnarray*}
Again using Lemma~\ref{integralextimateshypergeom} and the same comparison procedure as in the previous table, one gets that, for all $t>0$ and $x$ in $\R^3$,
\begin{equation}\label{ineqwave0}
 \max\left(|\partial_v \phi (t,x)|,  |\partial_u \phi (t,x)|\right) \leq C\left(\Vert f\Vert_{4,\delta}+\Vert g\Vert_{3,\delta-1}\right)
 \begin{cases}
\lAngle v\rAngle^{-1}\lAngle u\rAngle^{\delta}&\text{ for }\delta <0\\
\dfrac{\log\lAngle v\rAngle- \log\lAngle u\rAngle}{\lAngle v\rAngle-\lAngle u\rAngle} &\text{ for }\delta =0\\
\lAngle v\rAngle^{\delta-1} &\text{ for }\delta >0
\end{cases}
\end{equation}

Using these results, one can now refine the estimates for the derivatives of the function $\phi$, using the commutators properties of the wave equation with the vector fields generating the symmetries of the metric. Introducing
$$
S=u\partial_u+v\partial_v \text{ which satisfies } [S,\square]=-2\square,
$$
the function $S\phi$ satisfies the Cauchy problem for the linear wave equation
\begin{equation*} \left\{
\begin{array}{l}
\square\left( S \phi\right) =0 \\
S\phi|_{t=0} \in H^{j-1}_{\delta}(\R^3)\\ 
\partial_t S\phi|_{t=0} \in H^{j-2}_{\delta-1}(\R^3).
\end{array}\right.
\end{equation*}
As a consequence, one can apply the decay results \eqref{ineqwave} and\eqref{ineqwave0} to $S\phi$. This gives
\begin{equation*}
|S\phi(t,x)|\leq
C \left(\Vert f\Vert_{4,\delta}+\Vert g\Vert_{3,\delta-1}\right)
\begin{cases}
\lAngle v\rAngle^{-1}\lAngle u\rAngle^{1+\delta}&\text{ for }\delta <-1\\
 (\lAngle v\rAngle-\lAngle u\rAngle)^{-1}\left(\log\lAngle v\rAngle- \log\lAngle u\rAngle\right) &\text{ for }\delta =-1\\
\lAngle v\rAngle^\delta&\text{ for }\delta >-1
\end{cases}.
\end{equation*}
To get the full decay result, we need to compare carefully the decay for $S\phi$, and the decay for $\partial_u \phi$ to get the full decay result for $\partial_v \phi$. We follow the same procedure as above to compare the decay of these terms in a table using 
$$
|u| \leq \lAngle u \rAngle, \quad v \leq \lAngle v \rAngle, \quad  \text{ and, for all }r>1, \quad 
\left |\dfrac{u}{v}\right | \leq C \dfrac{\lAngle  u\rAngle}{\lAngle v \rAngle}\leq C.
$$
\begin{equation*}
\begin{array}{|c|c|c|c|}
\hline
&\frac{  S\phi}{v} & \frac{u}{v}\partial_u\phi & \partial_v \phi \\
\hline
\delta<-1 & \dfrac{\lAngle u \rAngle ^{\delta+1}}{\lAngle v\rAngle}\cdot \dfrac{1}{v} & \dfrac{\lAngle u \rAngle^{\delta} }{\lAngle v\rAngle}\cdot \dfrac{u}{ v }& \dfrac{\lAngle u \rAngle^{\delta+1} }{\lAngle v\rAngle^2} \\
\hline
\delta=-1 & \dfrac{1}{v}\dfrac{F_0\left(\tfrac{\lAngle u\rAngle }{\lAngle v\rAngle}\right) }{\lAngle v\rAngle} & \dfrac{1}{\lAngle u \rAngle\lAngle v\rAngle}\cdot\dfrac{u}{v} & \dfrac{F_0\left(\tfrac{\lAngle u\rAngle }{\lAngle v\rAngle}\right) }{\lAngle v\rAngle^2}\\
\hline
-1<\delta<0 &  \dfrac{\lAngle v\rAngle^{\delta}}{v} & \dfrac{\lAngle u \rAngle^{\delta}}{\lAngle v\rAngle}\cdot \dfrac{u}{ v } & \lAngle v \rAngle ^{\delta-1}\\
\hline
\delta= 0 & \dfrac{\lAngle v\rAngle^{\delta}}{v}  & \dfrac{u}{v}\cdot\dfrac{F_0\left(\tfrac{\lAngle u\rAngle }{\lAngle v\rAngle}\right) }{\lAngle v\rAngle} &  \lAngle v \rAngle ^{\delta-1}\\
\hline
\delta>0 & \dfrac{\lAngle v\rAngle^{\delta}}{v} & \dfrac{u\lAngle v\rAngle^{\delta-1}}{v} & \lAngle v \rAngle ^{\delta-1}\\
\hline
\end{array}
\end{equation*}

 This consequently gives the following
\begin{equation*}
|\partial_v\phi(t,x)|\leq
C \left(\Vert f\Vert_{4,\delta}+\Vert g\Vert_{3,\delta-1}\right)
\begin{cases}
\lAngle v\rAngle^{-2} \lAngle u\rAngle^{\delta+1} &\text{ if } \delta<-1 , \\
  \dfrac{1}{\lAngle v\rAngle}\dfrac{\log\lAngle v\rAngle- \log\lAngle u\rAngle}{\lAngle v\rAngle-\lAngle u\rAngle} & \text{ if } \delta=-1, \\
\lAngle v\rAngle^{\delta-1} &\text{ if } \delta>-1.
\end{cases}
\end{equation*}

Finally, the fact that $\square$ commutes with the generators of $SO(3)$,
$$
x^i\partial_j-x^j\partial_i ,
$$
can be used to obtain, for $t>0$ and $r>1$,
$$
|\snabla\phi |\leq C \left(\Vert f\Vert_{4,\delta}+\Vert g\Vert_{3,\delta-1}\right)
\begin{cases}
\lAngle v\rAngle^{-2} \lAngle u\rAngle^{\delta+1} &\text{ if } \delta<-1 , \\
  \dfrac{1}{\lAngle v\rAngle}\dfrac{\log\lAngle v\rAngle- \log\lAngle u\rAngle}{\lAngle v\rAngle-\lAngle u\rAngle} & \text{ if } \delta=-1 , \\
\lAngle v\rAngle^{\delta-1} &\text{ if } \delta>-1.
\end{cases}
$$
The proof is completed by a recursion over the number of derivatives, which is not written here in detail. \end{proof}

\begin{remark}
\begin{enumerate}
\item The derivatives $\partial_u$ and $\partial_v$, $\snabla$ play different roles when considering the full scale of weights. This difference is at the origin of the failure of the peeling property for higher spin fields when the rate of decay at $i^0$ of the initial data is too low.
 \item This difference between derivatives can be explained by considering the derivatives of the fundamental solution of the wave equation. 
\end{enumerate}
\end{remark}

\section{Estimates for spinor fields represented by  potentials} \label{sec:estspin} 
Penrose, in his original paper on zero-rest mass fields \cite{MR0175590}, proved to the following two results:
\begin{itemize}
\item the existence for analytic massless fields of arbitrary spin of representation of the form
$$
\phi_{A\dots F}= \xi_1^{A'}\dots \xi_{2s}^{F'}\nabla_{AA'}\dots\nabla_{FF'} \chi,
$$
where the $\xi^{A'}$ are constant spinors and $\chi$ is a complex function satisfying the wave equation
$$
\square \chi=0.
$$ 
\item from a decay ansatz for $\chi$ along outgoing null light rays, he deduced the full peeling result for the considered field.
\end{itemize}
The purpose of this section is to give a similar result for massless field admitting a potential of the form considered by Penrose. The decay result for the solution of the wave equation which is used in this section is given by Proposition \ref{prop:decaywave}.

\subsection{Geometric background and preliminary lemmata}\label{geometricbackground}

The geometric framework and notations are introduced in this section. The geometric background is the Minkowski space-time. We consider on this space time the normalized null tetrad defined by

\begin{align*}
l^a  &=  \sqrt{2}\partial_v = \frac{1}{\sqrt{2}}\left( \frac{\partial}{\partial t} + \frac{\partial }{\partial r}\right), & 
m^a &= \frac{1}{r\sqrt{2}}\left( \frac{\partial}{\partial \theta} + \frac{i}{\sin\theta}\frac{\partial}{\partial \varphi} \right) , \\
n^a&=\sqrt{2}\partial_u = \frac{1}{\sqrt{2}}\left( \frac{\partial}{\partial t} - \frac{\partial}{\partial r}\right),  & \overline{m}^a & =  \frac{1}{r\sqrt{2}}\left( \frac{\partial}{\partial \theta} - \frac{i}{\sin\theta}\frac{\partial}{\partial \varphi} \right),
\end{align*}

so that $l_a n^a=1$ and $m_a\overline{m}^a=-1$, and 
the plane spanned by $l^a, n^a$ is orthogonal to that spanned by $m^a, \overline{m}^a$.
The derivatives in the directions $l^a, n^a, m^a, \overline{m}^a$ are denoted by $D, D', \delta, \delta'$ respectively. Consider finally a spin basis $(o^A,\iota^A)$ arising from this tetrad, i.e.,
$$
\begin{array}{cc}
l^a = o^A\overline{o}^{A'},&\quad m^a= o^A\overline{\iota}^{A'} ,\\
n^a= \iota ^A \overline{\iota}^{A'}, &\quad \overline{m}^a= \iota^A\overline{o}^{A'} .
\end{array}
$$
This basis satisfies
\begin{subequations}\label{basis}
\begin{align}
D o^A&=0 , & D \iota^A&=0 ,\\
D' o^A &= 0 ,  & D' \iota^A&=0 , \\
 \delta o^A&=\frac{\cot{\theta}}{2 r \sqrt{2}} o^A, &\delta \iota^A&=-\frac{\cot{\theta}}{2 r \sqrt{2}} \iota^A-\frac{1}{r\sqrt{2}} o^A , \\
 \delta' o^A&= - \frac{\cot{\theta}}{2 r \sqrt{2}}o^A+\frac{\iota^A}{r\sqrt{2}}, 
 & \delta' \iota^A&= \frac{\cot{\theta}}{2r \sqrt{2}}\iota^A .
\end{align}
\end{subequations}

\begin{lemma}[Commutators]\label{commutator}
The following commutator relations hold:
\begin{itemize}
\item $D$ and $D'$ commute.
\item consider the gradient $\snabla$ on the sphere of radius $r$,
$$
\snabla   = -\overline{m} \delta  -m\delta' ,
$$
then, for any positive integer $k$, 
\begin{align*}
\snabla^kD&= D\snabla^k + \frac{k}{r\sqrt{2}}  \snabla^k , \\
\snabla^kD ' &= D'\snabla^k - \frac{k}{r\sqrt{2}}  \snabla^k,
\end{align*}
where $\snabla^k$ is the $k$-th power of the operator $\snabla$.
\end{itemize}
\end{lemma}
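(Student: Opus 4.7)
\emph{Plan.} The two items are handled separately: $[D,D']=0$ is immediate from flatness, while the $\snabla^k$-commutators are proved by induction on $k$ after establishing the base case $k=1$ via a short coordinate computation.

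For $[D,D']=0$, note that $l^a = \sqrt{2}\partial_v = \tfrac{1}{\sqrt{2}}(\partial_t+\partial_r)$ and $n^a = \sqrt{2}\partial_u = \tfrac{1}{\sqrt{2}}(\partial_t-\partial_r)$ are constant vector fields in any Cartesian frame on Minkowski space, so $\nabla_a l^b = \nabla_a n^b = 0$ in that frame. Combined with flatness $[\nabla_a,\nabla_b]=0$, the commutator $[D,D'] = l^a n^b[\nabla_a,\nabla_b]$ vanishes on any tensor- or spinor-valued expression.

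For the base case of the second claim, use the explicit forms $\delta = \tfrac{1}{r\sqrt{2}}(\partial_\theta + i\csc\theta\,\partial_\phi)$ and $D = \tfrac{1}{\sqrt{2}}(\partial_t + \partial_r)$. Since $D$ commutes with $\partial_\theta$ and $\partial_\phi$, the only contribution to $[D,\delta]$ comes from $D$ differentiating the radial prefactor, and $D(1/r) = -1/(r^2\sqrt{2})$ yields $[D,\delta] = -\tfrac{1}{r\sqrt{2}}\delta$; analogously $[D,\delta'] = -\tfrac{1}{r\sqrt{2}}\delta'$. The relations \eqref{basis} give $Do^A = D\iota^A = 0$ (and by conjugation the same for $\overline{o}^{A'}, \overline{\iota}^{A'}$), hence $Dm^a = D\overline{m}^a = 0$. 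The tetrad factors in $\snabla = -\overline{m}\delta - m\delta'$ therefore commute with $D$, and
\[
D\snabla - \snabla D = -\overline{m}[D,\delta] - m[D,\delta'] = \tfrac{1}{r\sqrt{2}}\bigl(\overline{m}\delta + m\delta'\bigr) = -\tfrac{1}{r\sqrt{2}}\snabla,
\]
giving the $k=1$ identity $\snabla D = D\snabla + \tfrac{1}{r\sqrt{2}}\snabla$. The $D'$-analog follows from $D'(1/r) = +1/(r^2\sqrt{2})$, which flips the sign.

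For the inductive step, apply $\snabla$ to both sides of the identity at level $k-1$. Because $\tfrac{k-1}{r\sqrt{2}}$ is a purely radial function, the Leibniz rule for the angular operator $\snabla$ reduces to $\snabla(\tfrac{k-1}{r\sqrt{2}}T) = \tfrac{k-1}{r\sqrt{2}}\snabla T$; together with the $k=1$ identity this gives
\[
\snabla^k D = \snabla D\snabla^{k-1} + \tfrac{k-1}{r\sqrt{2}}\snabla^k = D\snabla^k + \tfrac{1}{r\sqrt{2}}\snabla^k + \tfrac{k-1}{r\sqrt{2}}\snabla^k = D\snabla^k + \tfrac{k}{r\sqrt{2}}\snabla^k,
\]
and the $D'$-case is identical modulo signs. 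The only subtlety worth flagging is conceptual: in the induction the $k=1$ identity is applied to the rank-$(k-1)$ tensor $\snabla^{k-1}f$, which carries tetrad factors of $m^a, \overline{m}^a$. The vanishings $Dm^a = D\overline{m}^a = 0$ (and the $D'$-analogs) from \eqref{basis} ensure these factors remain inert under $D$ and $D'$, so the scalar commutator computation transfers verbatim to tensor arguments and the induction closes.
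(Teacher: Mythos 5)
Your proof is correct and follows essentially the same route as the paper's, which simply cites the scalar commutators $\delta D = D\delta + \tfrac{1}{r\sqrt{2}}\delta$, $\delta D' = D'\delta - \tfrac{1}{r\sqrt{2}}\delta$ together with $Dm^a = D'm^a = 0$ (and conjugates); you merely derive these explicitly in coordinates and spell out the induction on $k$, correctly flagging that the tetrad factors carried by $\snabla^{k-1}$ stay inert under $D$ and $D'$.
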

\begin{remark}\label{rem:commute} One can directly infer from this lemma that the operator $r\snabla$ commutes with $D$ and $D'$.
\end{remark}

\begin{proof}
The proof follows directly from the commutator relations
\begin{equation*}
\delta D = D\delta + \frac{1}{r\sqrt{2}}\delta, \quad \delta D' = D\delta - \frac{1}{r\sqrt{2}}\delta,
\end{equation*}
and their complex conjugates, and on
$$
D m^a = D'm^a = 0,
$$
and their complex conjugates.
\end{proof}

\begin{lemma}[Asymptotic behavior of the decomposition of a constant spinor]
Let $\xi^A$ be a constant spinor over $\mathbb{M}$ and consider its decomposition over the basis $(o^A, \iota ^A)$:
$$
\xi^A= \alpha o^A+\beta \iota^A.
$$
Then, for any integer $n$, $\nabla^n \alpha$ and $\nabla^n\beta$ are smooth bounded functions on $\mathbb{M}\backslash \{ \R \times B(0,1)\}$. Furthermore, considering the derivatives in the null directions, the following estimates hold for $\theta \in [c, \pi - c]$ ($c>0$):

\begin{align*}
D\alpha&= D'\alpha=0,&D\beta= D'\beta=0\\
|\delta^n \alpha |&\leq \frac{C}{r^n},&|\delta^n \beta |\leq \frac{C}{r^n},\\
|\delta'^n \alpha |&\leq \frac{C}{r^n},&|\delta'^n \beta |\leq \frac{C}{r^n}.
\end{align*}

\end{lemma}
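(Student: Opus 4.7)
The plan is as follows. First I would exploit the normalisation $o^A\iota_A = 1$, together with $o^A o_A = \iota^A \iota_A = 0$ (immediate from antisymmetry of $\epsilon_{AB}$), to invert the decomposition explicitly:
$$
\alpha = \xi^A \iota_A, \qquad \beta = -\xi^A o_A.
$$
Since $\xi^A$ is covariantly constant and the spin metric $\epsilon_{AB}$ is parallel, all derivatives of $\alpha$ and $\beta$ reduce to derivatives of the dyad. Lowering the index in the basis relations \eqref{basis} immediately gives $D o_A = D' o_A = D \iota_A = D' \iota_A = 0$, from which the four identities $D\alpha = D'\alpha = D\beta = D'\beta = 0$ follow in a single line.

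Second, I would establish that $\alpha$ and $\beta$ are functions of the angular coordinates $(\theta, \varphi)$ alone. The observation is that $l^a$ and $n^a$ span the same plane as $(\partial_t, \partial_r)$, so the identities $D o^A = D' o^A = D \iota^A = D' \iota^A = 0$ already say that the dyad spinors, viewed as spinor-valued functions on Minkowski space, are annihilated by both $\partial_t$ and $\partial_r$; hence their components depend only on $(\theta, \varphi)$. Contracting with the constant $\xi^A$ transfers this property to $\alpha$ and $\beta$, which are therefore smooth functions of $(\theta, \varphi)$ alone, uniformly bounded on any strip $\theta \in [c, \pi - c]$.

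Finally, the $r^{-n}$ bounds for the tangential derivatives come from the structure $\delta = \tfrac{1}{r\sqrt{2}}\bigl(\partial_\theta + \tfrac{i}{\sin\theta}\partial_\varphi\bigr)$ (and analogously for $\delta'$): each operator carries an overall factor $r^{-1}$ and annihilates $r$ itself. Applied to any function of $(\theta,\varphi)$ alone, $\delta$ produces $r^{-1}$ times another smooth function of $(\theta,\varphi)$; by induction $\delta^n \alpha = r^{-n} h_n(\theta,\varphi)$ with $h_n$ smooth on the open sphere, giving the stated bound on $\theta \in [c, \pi-c]$, and identically for $\delta'^n$. The smoothness and boundedness of $\nabla^n \alpha, \nabla^n \beta$ outside the tube $\mathbb{R} \times B(0,1)$ then reduces to collecting these tetrad components of $\nabla^n$ and noting that each of them is either zero or decays like $r^{-k}$ for some $k \ge 1$. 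The step I expect to require the most care is the second one: although the conclusion that $o^A$ and $\iota^A$ depend only on $(\theta, \varphi)$ follows formally from \eqref{basis}, the spin dyad is genuinely singular on the coordinate axis $\theta \in \{0, \pi\}$, which is precisely why the pointwise estimates of the lemma are stated only for $\theta \in [c, \pi-c]$; one must keep this restriction in mind when interpreting the global boundedness statement.
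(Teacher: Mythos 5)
Your proposal is correct and follows the same overall strategy as the paper: read off $D\alpha=D'\alpha=D\beta=D'\beta=0$ from the relations \eqref{basis} (the paper phrases this as $o^A$, $\iota^A$ being constant along the null rays), and then control the angular derivatives using \eqref{basis} again. Where you differ is in the mechanics of the last step: the paper writes out the explicit first-order recursion $\delta\alpha = -\tfrac{\cot\theta}{2r\sqrt2}\alpha + \tfrac{1}{r\sqrt2}\beta$, $\delta\beta = \tfrac{\cot\theta}{2r\sqrt2}\beta$ (and its $\delta'$ analogue) and inducts on these, whereas you observe once and for all that $\alpha,\beta$ depend only on $(\theta,\varphi)$ and that $\delta,\delta'$ are $r^{-1}$ times $r$-independent angular operators, so $\delta^n\alpha = r^{-n}h_n(\theta,\varphi)$ automatically. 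Your version is arguably cleaner and makes the origin of the $r^{-n}$ transparent. The one place where the paper's argument gives slightly more is the $n=0$ boundedness: it notes that $|\alpha|^2+|\beta|^2$ is the (constant) time component of the real null vector $\xi^A\overline{\xi}^{A'}$, which bounds $\alpha,\beta$ uniformly in $\theta$, not just on a compact strip $[c,\pi-c]$; your compactness argument only covers the strip, though as you correctly point out the dyad (and hence the higher derivatives) is singular on the axis anyway, so the restriction to $\theta\in[c,\pi-c]$ is unavoidable for $n\geq 1$ and is exactly how the lemma is used.
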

\begin{proof}
To prove that $\alpha$ and $\beta$ are bounded functions, it suffices to consider the decomposition of the real vector field $\xi{}^A\overline{\xi}{}^{A'}$ in Cartesian coordinates. The time component of the vector field is $|\alpha^2|+|\beta|^2$ and it is constant. As a consequence, $\alpha$ and $\beta$ are smooth bounded functions.

The second step consists in calculating the derivatives of the components in $\xi^{A}$. Since $o^A$ and $\iota^A$ are constant along outgoing and ingoing null rays, the following identities hold:
$$
D\alpha=D'\beta=D\alpha=D'\beta=0.
$$
For the angular derivatives, we have:
\begin{eqnarray*}
\left(\delta \alpha+\alpha \frac{\cot\theta}{2 r \sqrt{2}}-\frac{\beta}{r\sqrt{2}}\right)o^A+\left(\delta \beta - \beta \frac{\cot\theta}{2 r \sqrt{2}}\right)\iota^A=0,\\
\left(\delta'\alpha -\alpha\frac{\cot\theta}{2 r \sqrt{2}}\right)o^A+\left(  \delta' \beta+\beta \frac{\cot\theta}{2 r \sqrt{2}}+\frac{\alpha}{r\sqrt{2}}\right)\iota^A=0.
\end{eqnarray*}
An induction using these recursive relations gives the desired results.
\end{proof}

\subsection{Proof of the decay result}
We consider in this section a spin-$s$ field represented as
\begin{equation}\label{representation1}
\phi_{A\dots F}= \xi_1^{A'}\dots \xi_{2s}^{F'}\nabla_{AA'}\dots\nabla_{FF'} \chi,
\end{equation}
where $\chi$ is a complex scalar Hertz potential satisfying the wave equation
$$
\square \chi=0,
$$
and $\xi_1^{A'}, \dots, \xi_{2s}^{F'}$ are constants spinors.

The purpose of this section is to give a result which is similar to the one obtained for the wave (or spin-0) equation in order to retrieve similar decay estimates as in the pioneering work of Christodoulou-Klainerman \cite{Christodoulou:1990dd}.
\begin{proposition}[Decay estimates for arbitrary spin]\label{prop:decay1}
Let $(k,l,m)$ be a triple of non-negative integers and denote by $n$ their sum. We assume that the Hertz potential is a solution of the Cauchy problem, for $j> 2+2s+n$ and $\delta \notin \mathbb{Z}$
\begin{equation*}\left\{
\begin{array}{l}
\square \chi =0 , \\
\chi|_{t=0}\in H^{j}_\delta(\R^3, \mathbb{C}) , \\
\partial_t \chi|_{t=0}\in H^{j-1}_{\delta-1}(\R^3, \mathbb{C})  .
\end{array}\right.  
\end{equation*}
The norm of the initial data is denoted by $I_{j,\delta}$, 
$$
I_{j,\delta} = \Vert\chi|_{t=0}\Vert_{j, \delta} + \Vert\partial_t\chi|_{t=0}\Vert_{j-1, \delta-1}
$$  
Then, the following inequalities hold, for all $\bm{i}$ in $\{0,2s\}$:
\begin{enumerate}
\item for any $t\geq 0$, $x \in \R^3$, such that $t>3r$, that is to say in the interior region,
$$
|\nabla^n \phi_{A\dots F}|\leq c \lAngle t\rAngle^{\delta-2s-n} I_{n+2s+3, \delta} ,
$$
\item for $\bm{i}$ such that $1+\delta-l-\bm{i}<0$, for any $t\geq 0$, $x \in \R^3$, such that $3r>t>\tfrac{r}{3}$, that is to say in the exterior region,
$$
|D^kD'^l\snabla^m\phi_{\bm{i}}|\leq\frac{c \lAngle u\rAngle ^{1+\delta-\bm{i}-l}}{\lAngle v\rAngle^{1+2s-\bm{i}+k+m}}I_{n+2s+3,\delta} , 
$$
\item for $\bm{i}$ such that $1+\delta-l-\bm{i}>0$, for any $t\geq 0$, $x \in \R^3$, such that $3r>t>\tfrac{r}{3}$, that is to say in the exterior region,
$$
|D^kD'^l\snabla^m\phi_{\bm{i}}|\leq c \lAngle v\rAngle^{\delta -2s-l-k-m} I_{n+2s+3,\delta}.
$$
\end{enumerate}
\end{proposition}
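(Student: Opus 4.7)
\medskip

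The plan is to express each component $\phi_{\bm i}$ (and, more generally, $D^k D'^l \snabla^m \phi_{\bm i}$) as a sum of terms each of the form ``bounded coefficient'' times $D^a D'^b \snabla^c \chi$, and then to invoke the scalar wave estimate, Proposition~\ref{prop:decaywave}. The decomposition starts from the null tetrad identity
$$\nabla_{AA'} = \iota_A \bar\iota_{A'}\, D + o_A \bar o_{A'}\, D' - o_A\bar\iota_{A'}\,\delta' - \iota_A \bar o_{A'}\,\delta,$$
so that an $\iota^A$-contraction picks out the operators $\{D',\delta\}$ while an $o^A$-contraction picks out $\{D,\delta'\}$. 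In parallel, each constant spinor $\xi^{A'}$ is written as $\alpha \bar o^{A'} + \beta \bar\iota^{A'}$, whose coefficients and their null-tetrad derivatives are all $O(1)$ in the exterior region by the asymptotic lemma proved in Section~\ref{geometricbackground}.

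\smallskip

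Substituting these decompositions in the representation formula \eqref{representation1} and contracting with $\iota^{A_1}\cdots\iota^{A_{\bm i}} o^{A_{\bm i+1}}\cdots o^{A_{2s}}$ yields, up to lower order terms coming from \eqref{basis}, a sum in which every leading contribution has exactly $\bm i$ operators of type $D'$, exactly $2s-\bm i$ operators of type $D$, and possibly some $\delta, \delta'$ in place of these; every replacement of a $D$ or $D'$ by an angular operator brings, via \eqref{basis} and Lemma~\ref{commutator}, an extra factor $1/(r\sqrt2)$, which in the exterior region $r>1/3\cdot t$ is bounded by $C/\lAngle v\rAngle$. Differentiating the resulting expression with $D^k D'^l \snabla^m$ and using the commutators $[\snabla,D]$, $[\snabla,D']$ of Lemma~\ref{commutator} to rearrange derivatives adds $k$ to the total number of $D$'s, $l$ to the number of $D'$'s, and $m$ angular derivatives; the commutator error terms are again of the form $(r\sqrt2)^{-1}$ times a product of strictly fewer derivatives, and are therefore harmless.

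\smallskip

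Applying Proposition~\ref{prop:decaywave} with $(a,b,c)=(2s-\bm i + k,\;\bm i + l,\; m)$ to each leading term under the regularity hypothesis $j\geq n+2s+3$ yields, in the exterior region,
$$|D^k D'^l \snabla^m \phi_{\bm i}| \leq C\, I_{n+2s+3,\delta}\times \begin{cases} \lAngle u\rAngle^{1+\delta-\bm i-l}\lAngle v\rAngle^{-1-(2s-\bm i)-k-m} & \text{if } 1+\delta-\bm i - l<0,\\ \lAngle v\rAngle^{\delta-2s-k-l-m} & \text{if } 1+\delta-\bm i - l>0,\end{cases}$$
which is exactly items (2) and (3) of the proposition. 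In the interior region $t>3r$, Remark~\ref{rem:decaywave}(2) gives $|\partial_v^k \partial_u^l \snabla^m \chi|\leq C \lAngle v\rAngle^{\delta-k-l-m}$ uniformly in $\delta$, and the equivalence $\lAngle u\rAngle\sim\lAngle v\rAngle\sim\lAngle t\rAngle$ there allows us to ignore the bookkeeping of which derivative goes where: summing over all $n+2s$ derivatives appearing in $\nabla^n\phi_{A\dots F}$ produces $|\nabla^n\phi_{A\dots F}|\leq C \lAngle t\rAngle^{\delta-2s-n} I_{n+2s+3,\delta}$, which is item~(1).

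\smallskip

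The main obstacle is the careful bookkeeping of the commutators and of the ``derivative count'' in each term: one needs to check that every lower-order term (produced when a $\delta$ or $\delta'$ appears instead of a $D$ or $D'$, or when $\snabla$ is moved past $D, D'$, or when derivatives fall on the coefficients $o^A,\iota^A,\alpha,\beta$) comes with a $1/r$ factor that, in the exterior region, provides at least as much decay in $\lAngle v\rAngle$ as if the missing $D$ had been present. The cleanest way to handle this is by induction on the total order $n=k+l+m$, applying the commutator relations of Lemma~\ref{commutator} (and Remark~\ref{rem:commute}: $r\snabla$ commutes with $D,D'$) to reduce each term to the normal form $D^a D'^b \snabla^c \chi$ times a coefficient bounded by $\lAngle v\rAngle^{-j}$ for some $j$ at least equal to the number of $D$-derivatives that have been ``traded'' for $\delta'$'s or moved through commutators.
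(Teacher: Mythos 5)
Your proposal is correct in substance and follows essentially the same route as the paper: decompose $\nabla_{AA'}$ in the null tetrad, use the boundedness of $\alpha,\beta$ and their derivatives together with the relations \eqref{basis} and Lemma~\ref{commutator} to reduce every term to the normal form (bounded coefficient)$\times r^{-e}\times D^aD'^b\snabla^c\chi$, and then invoke Proposition~\ref{prop:decaywave}. The only organizational difference is that the paper runs an induction on the spin, peeling off one factor $\xi^{A'}\nabla_{AA'}$ at a time via Lemma~\ref{relationscoef}, whereas you decompose all $2s$ derivatives at once and propose an induction on the derivative order; both are workable.

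One point in your sketch is too loose and is precisely where the paper spends its effort. You assert that every lower-order term carries a $1/r$ factor giving ``at least as much decay in $\lAngle v\rAngle$ as if the missing derivative had been present.'' For a traded $D$ this is immediate, but for a traded $D'$ the issue is not decay in $\lAngle v\rAngle$: dropping a $D'$ can move the term from the branch $1+\delta-b<0$ of the wave estimate (bound $\lAngle u\rAngle^{1+\delta-b}\lAngle v\rAngle^{-1-a-c}$) to the branch $1+\delta-b>0$ (bound $\lAngle v\rAngle^{\delta-a-b-c}$), and the two bounds are not comparable termwise without an explicit check. The paper handles this by splitting the sum over the number $e$ of traded $D'$-derivatives at the threshold $e'$ and verifying that $(\lAngle u\rAngle/\lAngle v\rAngle)^{-(1+\delta-l)}\leq 1$ in the relevant regime, which is what rescues the borderline terms. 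Your induction would go through, but this case distinction must be made explicit; as stated, the justification covers only the benign cases.
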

\begin{remark} 
\begin{enumerate}
\item For the spin-$1$ case with $\delta=-1/2$, that is to say for initial data for the Maxwell fields lying in $H_{-5/2}$, which is the case considered in \cite{Christodoulou:1990dd}, one recovers the decay result stated in that paper. For the spin-$2$ case with $\delta=1/2$, that is to say for initial data in $H_{-7/2}$, which is the case considered by Christodoulou-Klainerman, their results are recovered. We need slightly higher regularity though due to our estimates for the wave equation.
\item It should be noted that in the case when the potential does not decay enough (for $\delta>-2s-2$), the decay rates of some components of the field cannot distinguished, and hence the peeling property fails to hold. 
\item The estimates stated in Proposition \ref{prop:decaywave} hold for all weight $\delta$ in $\mathbb{R}$. As a consequence, this should allow us to handle all weights for the initial data of the Hertz potential. Nonetheless, the case of integer weights has been put aside: firstly, the representation of massless free fields, as stated in Proposition~\ref{proprepspingeneral}, does not handle these weights; secondly, the appearance of such weights will lead to an unnecessary lengthy discussion in the proof.
\item The peeling result obtained by Penrose \cite{MR0175590} depends on the assumption that the Hertz potential decays as
$\chi \sim 1/r $
where $r$ is a parameter along the outgoing null rays. For such a decay result to hold, the initial data for the potential  have to lie in $H_{\delta}$ with $\delta<-1$. The peeling result by Mason-Nicolas \cite{Mason:2012jq}, which holds for the spins $1/2$ and $1$ on the Schwarzschild space-time, is for initial data lying in a Sobolev space whose weights are not equally distributed on the components. 
\end{enumerate}
\end{remark}
\begin{proof}
The proof is made by induction on the spin. The result for the spin-0 case is exactly the one obtained for the wave equation and is the base step of the induction.

We now make the following induction hypothesis for spin-$s$, with $s\in \tfrac{1}{2}\mathbb{N}_0$: for any triple $(k,l,m)$, and for any spin-$s$ field represented by
$$
\psi\underbrace{{}_{A\dots F}}_{\mathclap{2s \text{ indices}}}= \xi_1^{A'}\dots \xi_{2s}^{F'}\nabla_{AA'}\dots\nabla_{FF'} \chi,
$$
where $\chi$ is a potential whose initial data lie in $H^j_{\delta}(\R^3,\mathbb{C})\times H^{j-1}_{\delta-1}(\R^3,\mathbb{C})$, for $j>2+2s+n$ with $n=k+l+m$, the estimates stated in the theorem hold.

Let now $(k,l,m)$ be a triple of non-negative integers and consider a spin-$\left(s+1/2\right)$ field written
$$
\phi\underbrace{{}_{A\dots FG}}_{\mathclap{2s +1\text{ indices}}}= \xi_1^{A'}\dots \xi_{2s+1}^{G'}\nabla_{AA'}\dots\nabla_{GG'} \chi,
$$
where $\chi$ is a potential whose initial data are in $H^j_{\delta}(\R^3,\mathbb{C})\times H^{j-1}_{\delta-1}(\R^3,\mathbb{C})$ ($j>3+2s+n$). Consequently, the induction hypothesis is satisfied for the spin-$s$ field 
$$
\psi\underbrace{{}_{B\dots G}}_{\mathclap{2s \text{ indices}}}= \xi_2^{B'}\dots \xi_{2s+1}^{G'}\nabla_{BB'}\dots\nabla_{GG'} \chi,
$$
with the same $\chi$ whose initial data also lies in $H^p_{\delta}(\R^3)\times H^{p-1}_{\delta-1}(\R^3)$ ($p=j-1>2+2s+n$). It remains then to prove that
$$
\phi_{A\dots G}= \xi^{A'}\nabla_{AA'}\psi_{B\dots G}
$$
satisfies the appropriate decay result.

We first consider the interior decay. The result trivially follows from the interior decay result for the wave equation stated in Proposition \ref{prop:decaywave} and Remark \ref{rem:decaywave}. As a consequence, the following relation holds:
\begin{equation*}
|\nabla^n\phi_{A\dots G}|=|\xi^{A'}\nabla_{AA'}\left(\nabla^n\psi _{B\dots G}\right)|,
\end{equation*}
which is a derivative of order $n+1$ of a spinor field of valence $2s$ which satisfies the induction hypothesis. As a consequence, the following decay result is immediate, in the interior region $3t \leq r$:
$$
|\nabla^{n}\phi_{A\dots F}|\leq C\frac{I_{n+2s+3,\delta}}{\lAngle t\rAngle^{-\delta+2s+n+1}},
$$
where $C$ is a constant depending on $n$ and $s$. This closes the induction for the part concerning the interior decay.

Now we consider the problem of the exterior decay, that is to say the decay in the neighborhood of an outgoing light cone:
\begin{equation}\label{descriptionexterior}
 \frac{r}{3} \leq t\leq 3 r \Leftrightarrow |t-r|\leq \frac12 \left|t+r\right|.
\end{equation}
Recall that the components of the spinor $\psi_{B\dots F}$ are defined by
$$
\psi_{\bm{i}}= \underbrace{\iota^B\dots \iota^C}_{\bm{i}}\underbrace {o ^D \dots o^G}_{2s -\bm{i}}\psi_{B\dots G}.
$$
The proof in this region is done by induction as in the first part of the proof.  Let $(k,l,m)$ be a given triple of non negative integers and denote by $n$ their sum. The induction hypothesis is written as follows for spin-$s$:
\begin{quote}
For any spinor field of valence $2s$ $\psi_{B\dots G}$ satisfying:
$$
\psi_{B\dots G}= \xi_2^{B'}\dots \xi_{2s+1}^{G'}\nabla_{BB'}\dots \nabla_{GG'} \chi
$$
where $\chi$ is a complex scalar solution of the massless wave equation whose initial data lies in   $H^j_{\delta}(\R^3, \mathbb{C})\times H^{j-1}_{\delta-1}(\R^3, \mathbb{C})$ ($j>3+2s+n$), the following decay results holds, in the exterior region $\frac{t}{3}\leq r\leq 3t$, for all integer $k,l,m$:
\begin{itemize}
\item for $\bm{i}$ such that $1+\delta-l-\bm{i}<0$:
$$
|D^kD'^l\snabla^m\psi_{\bm{i}}|\leq\frac{c \lAngle u\rAngle ^{\delta+1-\bm{i}-l}}{\lAngle v\rAngle^{1+2s-\bm{i}+k+m}}I_{n+2s+3,\delta}.
$$
\item for $\bm{i}$ such that $1+\delta-l-\bm{i}>0$:
$$
|D^kD'^l\snabla^m\psi_{\bm{i}}|\leq C \lAngle v\rAngle^{\delta -2s-l-k-m} I_{n+2s+3,\delta},
$$
where the constant $C$ depends on the bounds of the exterior domain and the integers $k,l,m$.
\end{itemize}
\end{quote}

There is no need to prove the initial step since it is exactly the result for the standard wave equation. Assume that the induction hypothesis holds for spin-$s$ in $\frac12 \mathbb{N}_0$ and consider the field $\phi_{A\dots G}$ of spin-$\left(s+1/2\right)$ written as:
$$
\phi_{A\dots G}= \xi_1^{A'}\dots \xi_{2s+1}^{G'}\nabla_{AA'}\dots \nabla_{GG'} \chi
$$
where $\chi$ is a complex scalar solution of the massless wave equation whose initial data lies in $H^j_{\delta}(\R^3,\mathbb{C})\times H^{j-1}_{\delta+1}(\R^3,\mathbb{C})$ ($j>2+2s+n$). As a consequence, the spinor:
$$
\psi_{B\dots G}=\xi_2^{B'}\dots \xi_{2s+1}^{G'}\nabla_{BB'}\dots \nabla_{GG'} \chi
$$
is a spinor field of valence $s$ satisfying the requirements of the induction assumption.

To insure the proof of the induction assumption, a relation between the components of $\phi_{A\dots F}$ and the components of the field $\psi_{B\dots G}$ have to established. The components of these fields are related by the following Lemma. 
\begin{lemma}\label{relationscoef} Let $s \in \tfrac{1}{2} \mathbb{N}_0$. 
The components of $\phi_{A\dots G}$ of spin-$\left(s+1/2\right)$ and $\psi_{B \dots G}$ of spin-$s$, related by
$$
\phi_{A\dots G} = \xi^{A'}\nabla_{AA'} \psi_{B\dots G} \text{ where } \xi^{A'} = \alpha o^{A'} + \beta i^{A'} 
$$
are given by the following relations:
\begin{eqnarray}
\phi_0&= &\alpha D\psi_0 +\beta \delta \psi_0-s\beta\frac{\cot \theta}{r\sqrt{2}}\psi_0 ,\\
\phi_{\bm{i}}&= &\alpha\delta'\psi_{\bm{i}-1} +\beta D' \psi_{\bm{i}-1}
+\frac{\alpha}{r\sqrt{2}}\big((s+1-\bm{i}) \cot\theta\psi_{\bm{i}-1}-(2s+1-\bm{i})\psi_{\bm{i}}\big) , \label{relation2} 
\end{eqnarray}
for $\bm{i}>0$.
\end{lemma}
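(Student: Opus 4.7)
The plan is a direct dyad expansion. Expanding $\xi^{A'} = \alpha o^{A'} + \beta \iota^{A'}$, the compositions of $\xi^{A'}\nabla_{AA'}$ with the unprimed dyad are recognized as the NP directional operators:
\[
o^A \xi^{A'} \nabla_{AA'} = \alpha D + \beta \delta, \qquad \iota^A \xi^{A'} \nabla_{AA'} = \alpha \delta' + \beta D'.
\]
Since the Penrose representation produces a totally symmetric $\phi_{A\dots G}$ (this is where $\square\chi=0$ is used), the choice of which index carries the derivative is immaterial. The strategy is to pick that index so as to minimize the number of correction terms, then move the remaining dyad spinors inside the derivative by Leibniz and collect the corrections from \eqref{basis}.

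For $\phi_0$, contract with $o^{A_1}\cdots o^{A_{2s+1}}$. By Leibniz together with the symmetry of $\psi$,
\[
\phi_0 \;=\; o^A\xi^{A'}\nabla_{AA'}\psi_0 \;-\; 2s\, o^A \xi^{A'}(\nabla_{AA'} o^B)\, o^C\cdots o^G\,\psi_{BC\cdots G};
\]
the first term equals $\alpha D\psi_0 + \beta \delta\psi_0$ and, using $(\alpha D+\beta\delta)o^B = \beta\tfrac{\cot\theta}{2r\sqrt 2}o^B$ read off from \eqref{basis}, the correction collapses to $-s\beta\tfrac{\cot\theta}{r\sqrt 2}\psi_0$. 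For $\phi_{\bm{i}}$ with $\bm{i}\ge 1$, contract with $\iota^{A_1}\cdots\iota^{A_{\bm{i}}}o^{A_{\bm{i}+1}}\cdots o^{A_{2s+1}}$, take $A_1$ as the derivative index, and apply Leibniz to the remaining $\bm{i}-1$ copies of $\iota$ and $2s+1-\bm{i}$ copies of $o$. The main term is $\alpha\delta'\psi_{\bm{i}-1}+\beta D'\psi_{\bm{i}-1}$. From \eqref{basis} one obtains
\[
\iota^{A_1}\xi^{A'}\nabla_{A_1 A'}\iota^B = \alpha\tfrac{\cot\theta}{2r\sqrt 2}\iota^B,\qquad
\iota^{A_1}\xi^{A'}\nabla_{A_1 A'}o^B = -\alpha\tfrac{\cot\theta}{2r\sqrt 2}o^B+\tfrac{\alpha}{r\sqrt 2}\iota^B,
\]
so that each of the $\bm{i}-1$ $\iota$-corrections simply reproduces a $\psi_{\bm{i}-1}$ piece, while each of the $2s+1-\bm{i}$ $o$-corrections contributes both a $\psi_{\bm{i}-1}$ piece and a $\psi_{\bm{i}}$ piece. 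Combining with the Leibniz minus sign and using $(2s+1-\bm{i})-(\bm{i}-1) = 2(s+1-\bm{i})$ produces exactly the bracket appearing in \eqref{relation2}.

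The argument is an elementary Leibniz expansion paired with \eqref{basis}; the only point requiring attention is the combinatorial bookkeeping in the $\phi_{\bm{i}}$ case, where the two $\cot\theta$ contributions---one from derivatives landing on $\iota$'s, one from derivatives landing on $o$'s---must combine with the correct signs and multiplicities to reproduce the coefficient $s+1-\bm{i}$. No substantive obstacle is anticipated.
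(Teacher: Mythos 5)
Your proof is correct and follows essentially the same route as the paper's: contract with the dyad to recognize $o^A\xi^{A'}\nabla_{AA'}=\alpha D+\beta\delta$ and $\iota^A\xi^{A'}\nabla_{AA'}=\alpha\delta'+\beta D'$, then apply Leibniz together with the relations \eqref{basis}, and your coefficient bookkeeping $\bigl((2s+1-\bm{i})-(\bm{i}-1)\bigr)/2=s+1-\bm{i}$ matches the paper's computation exactly. The aside about total symmetry of $\phi_{A\dots G}$ is harmless but not actually needed, since the index ordering in the lemma already assigns the derivative index an $\iota$ when $\bm{i}\geq 1$ and an $o$ when $\bm{i}=0$.
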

\begin{proof} The proof is carried out by using relations \eqref{basis} and is a basic calculation. We have 
\begin{eqnarray*}
\phi_0&=& o^A \dots o ^G \phi_{A\dots G} , \\
&=&  o^A \dots o ^G\xi^{A'} \nabla_{AA'} \psi_{B\dots G} , \\
&=& \alpha o^B \dots o^G D \psi_{B\dots G} + \beta o^B \dots o^G \delta \psi_{B\dots G} .
 \end{eqnarray*}
Since $D o ^A=0$ and $ \delta o^A=\frac{\cot{\theta}}{2 r \sqrt{2}} o^A$, we have
$$
o^B \dots o^G \delta \psi_{B\dots G}=  \delta \psi_{0} -2s \frac{\cot\theta}{2r \sqrt{2}}\psi_0 , 
$$
and hence
$$
\phi_0= \alpha D \psi_0 +\beta \delta \psi_0 - s \beta \frac{\cot\theta}{r \sqrt{2}}\psi_0.
$$
Consider now $\bm{i}>0$ fixed; we have
\begin{eqnarray*}
\phi_{\bm{i}}&=& \underbrace{\iota^A \dots \iota ^C}_{\bm{i} \text{ times}} \underbrace{o^D \dots o^G}_{2s + 1-\bm{i} }\phi_{A \dots G}\\
&=& \alpha \underbrace{\iota^B \dots \iota ^C}_{\bm{i}-1 } \underbrace{o^D \dots o^G}_{2s + 1-\bm{i} }\delta ' \psi_{B \dots G}+\beta  \underbrace{\iota^B \dots \iota ^C}_{\bm{i}-1 } \underbrace{o^D \dots o^G}_{2s + 1-\bm{i} }D ' \psi_{B \dots G}.
\end{eqnarray*}
Since $\delta' o^A= - \frac{\cot{\theta}}{2 r \sqrt{2}}o^A+\frac{\iota^A}{r\sqrt{2}}$
and $\delta' \iota^A= \frac{\cot{\theta}}{2r \sqrt{2}}\iota^A$ we have
\begin{align*}
 \underbrace{\iota^B \dots \iota ^C}_{\bm{i}-1 }& \underbrace{o^D \dots o^G}_{2s + 1-\bm{i} }\delta ' \psi_{B \dots G}
=
 \delta' \psi_{\bm{i}-1}
 -(\bm{i}-1)\left(\frac{\cot\theta}{2 r \sqrt{2}}  \underbrace{\iota^B \dots \iota ^C}_{\bm{i}-1 } \underbrace{o^D \dots o^G}_{2s + 1-\bm{i} }\psi_{B \dots G}\right)\nonumber\\
& - (2s+1-\bm{i})\left(- \frac{\cot\theta}{2 r \sqrt{2}}  \underbrace{\iota^B \dots \iota ^C}_{\bm{i}-1 } \underbrace{o^D \dots o^G}_{2s + 1-\bm{i}} \psi_{B\dots G} +  \frac{1}{ r \sqrt{2}}  \underbrace{\iota^B \dots \iota ^C}_{\bm{i} } \underbrace{o^D \dots o^G}_{2s-\bm{i}} \psi_{B\dots G} \right).
 \end{align*}
Consequently,  by the relations $D'\iota^A=D'o^A=0$, we get \eqref{relation2}.
\end{proof}

Using Lemma~\ref{relationscoef}, the proof of Proposition \ref{prop:decay1} can be continued. The two cases ($\bm{i}=0$ and $\bm{i}>0$) are treated separately although the method is the same. Here we present the case $\bm{i}=0$, the other case follows similarly.

The expression of the derivative $D^k D'^l\snabla^m\phi_0$ is calculated explicitly, one derivative at a time, using the Leibniz rule:
\begin{gather*}
\snabla^m \phi_0= \sum_{a=0}^m\binom{a}{m}\snabla^{a}\alpha\snabla^{m-a}D\psi_0
+ \sum_{a=0}^m\binom{a}{m}\snabla^{a}\beta\snabla^{m-a}\delta\psi_0\\
-2s\sum_{a+b+c=m}\frac{m!}{a!b!c!}\left(\frac{\partial_{\theta}^b\left(\cot\theta\right)}{2\sqrt{2}r^{b+1}}\right)\snabla^a\beta \partial_{\theta}^b \snabla^c\psi_0
\end{gather*}
since
$$
\snabla^b\cot \theta = \frac{1}{r^b}\frac{\partial^b \cot \theta}{\partial \theta^b}\partial_{\theta}^b, 
$$
the power on the vector field have to be understood as a symmetric tensor product.

We then apply simultaneously the derivatives $D$ and $D'$, using the Leibniz rule again. Notice first that $\snabla^a\alpha$ and $\snabla^a\beta$ depend on $r$ but $r^a\snabla^a \alpha$ and $r^a\snabla^a \beta$ do not, since  both $\alpha$ and $\beta$ are independent both of time and radius, and $r\snabla$ commutes with $D$ and $D'$ (cf. Remark \ref{rem:commute}). We have
\begin{gather*}
D^k D'^l \snabla^m \phi_0 =\\
 \sum_{d=0}^k \sum_{e=0}^l \sum_{a=0}^m\left[\binom{d}{k}\binom{e}{l}\binom{a}{m}\left(r^a\snabla^{a}\alpha \right)\left((-1)^dA_{a+d+e}^{d+e} \right)\right]\left\{\frac{D^{k-d}D'^{l-e}\snabla^{m-a}D\psi_0}{r^{a+d+e}}\right\}\\
 +
  \sum_{d=0}^k \sum_{e=0}^l \sum_{a=0}^m\left[\binom{d}{k}\binom{e}{l}\binom{a}{m}\left(r^a\snabla^{a}\beta \right)\left((-1)^d A_{a+d+e}^{d+e} \right)\right]\left\{\frac{D^{k-d}D'^{l-e}\snabla^{m-a}\delta\psi_0}{r^{a+d+e}}\right\}\\
 +2s \sum_{d=0}^k \sum_{e=0}^l \sum_{a+b+c=m}\left[\binom{d}{k}\binom{e}{l}
 \frac{m!\partial_{\theta}^b\left(\cot\theta\right)}{2\sqrt{2}a!b!c!} \left(r^a \snabla^a\beta\right)\left((-1)^dA_{1+a+d+e}^{d+e} \right)\right]\\
 \times\left\{\frac{D^{k-d}D'^{l-e}\snabla^c\psi_0}{r^{1+a+b+d+e}}\right\},
\end{gather*}
where 
$$
A_{m}^{n} = \frac{n!}{ (m-1)! }.
$$  
The factors in the square brackets are clearly bounded provided that $\theta$ lies in $[c, \pi- c]$ for a given (arbitrarily small) positive constant $c$. The singularity of the tetrad at the axis $\theta = \pm\pi$ prevents from covering the entire interval $[0,\pi]$. The problem can be easily solved by considering another tetrad associated to another spherical coordinate system. There exists consequently a constant $C$ depending on the spin, the $L^\infty$-bounds on the coefficients of the spinor field $\xi^{A'}$ and their derivatives, such that
\begin{gather}
|D^k D'^l \snabla^m \phi_0|\leq
 C \left (\sum_{d=0}^k \sum_{e=0}^l \sum_{a=0}^m\left|\frac{D^{k-d}D'^{l-e}\snabla^{m-a}D\psi_0}{r^{a+d+e}}\right|\right .\nonumber\\
 +  \left . \sum_{d=0}^k \sum_{e=0}^l \sum_{a=0}^m\left|\frac{D^{k-d}D'^{l-e}\snabla^{m-a}\delta\psi_0}{r^{a+d+e}}\right|
 + \sum_{d=0}^k \sum_{e=0}^l \sum_{a+b+c=m}  \left|\frac{D^{k-d}D'^{l-e}\snabla^c\psi_0}{r^{1+a+b+d+e}}\right| \right).\label{Derivativesphi0Estimate1}
\end{gather}
Each of these terms is treated separately.

The first term can be transformed to fit the induction hypothesis using Lemma \ref{commutator}:
\begin{align*}
D^{k-d}D'^{l-e}\snabla^{m-a}D\psi_0={}& D^{k-d+1}D'^{l-e}\snabla^{m-a} \psi_0+D^{k-d}D'^{l-e}\left(\frac{m-a}{r\sqrt{2}}\snabla^{m-a} \psi_0\right)\\
={}&D^{k-d+1}D'^{l-e}\snabla^{m-a} \psi_0\\
+\sum_{f=0}^{k-d}\sum_{g=0}^{l-e}\binom{f}{k-d}\binom{g}{l-e}&\frac{(-1)^f(m-a)(k-d+l-e)!}{r^{1+f+g}\sqrt{2}}D^{k-d-f}D'^{l-e-g}\snabla^{m-a}\psi_0.
\end{align*}

In order to use the decay result stated in the induction hypothesis, the number of derivatives in the ingoing direction has to be taken into account: 
\begin{enumerate}
\item[a)]{if $1+\delta -l<0$, then $$ |D^kD'^l \snabla^m \psi_0| \leq C \lAngle u\rAngle^{1+\delta - l}\lAngle v\rAngle^{-1-2s-k-m} I_{n+2s+3,\delta};$$}
\item[b)]{if $1+\delta -l>0$, then $$ |D^kD'^l \snabla^m \psi_0| \leq C \lAngle v\rAngle^{\delta-2s-k-l-m} I_{n+2s+3,\delta}. $$}
\end{enumerate}
In order to simplify the presentation of the proof, we deal specifically with the sum
$$
A = \sum_{d=0}^k \sum_{e=0}^l \sum_{a=0}^m\left|\frac{D^{k-d+1}D'^{l-e}\snabla^{m-a}\psi_0}{r^{a+d+e}}\right|.
$$
Assume first that $1+\delta-l<0$.

For this case the sum a priori contains terms of both type {\it a} and type {\it b}. We therefore split up the sum over $e$ into two parts corresponding to the different types. Let $e'$ be the largest integer such that $e'\leq l$ and $\delta-l+e'<0$. This means that, if $0\leq e\leq e'-1$, we have $1+\delta-(l-e)<0$ and if $e'\leq e\leq l$ we have $1+\delta-(l-e)>0$.
Using the induction hypothesis, the sum $A$ can then be bounded as follows: there exists a constant $C$ such that
\begin{align*}
A \leq{}& C \sum_{d=0}^k \sum_{a=0}^m \frac{I_{n+2s+4, \delta}}{r^{a+d}}\cdot\Big( 
\sum_{e=0}^{e'-1}\frac{\lAngle u\rAngle^{1+\delta-(l-e)}}{\lAngle v\rAngle^{2+2s+k-d+m-a}r^{e}}
+\sum_{e=e'}^{l}\frac{\lAngle v\rAngle^{\delta-2s-k+d-1-l+e-m+a}}{r^{e}}\Big )
\\
\leq{}& C \frac{\lAngle u\rAngle^{1+\delta-l}I_{n+2s+4, \delta}}{\lAngle v\rAngle^{2s+2+k+m}} \left(
\sum_{d=0}^k \sum_{a=0}^m \left(\frac{\lAngle v\rAngle}{r}\right)^{a+d}\right)\\
&\times \left(
\sum_{e=0}^{e'-1}\left(\frac{\lAngle u\rAngle}{\lAngle v\rAngle}\right)^{e}\left(\frac{\lAngle v\rAngle}{r}\right)^{e}
+\sum_{e=e'}^{l}\left(\frac{\lAngle u\rAngle}{\lAngle v\rAngle}\right)^{-(1+\delta-l)}\left(\frac{\lAngle v\rAngle}{r}\right)^{e}
 \right).
\end{align*}
Since we are considering the exterior region, that is to say the region defined by
$$
\frac{t}{3}\leq r \leq 3t,
$$
the following inequalities hold (assuming also $r>1$, which is not restrictive, when studying the asymptotic behavior),
$$
\frac{\lAngle v\rAngle}{r}\leq \sqrt{17} \text{ and } \frac{\lAngle u\rAngle}{\lAngle v\rAngle} \leq 1. 
$$ 
As a consequence, there exists a constant $C$ depending only on the considered region and of the number of derivatives such that
$$
A \leq C \lAngle u\rAngle^{1+\delta-l}\lAngle v\rAngle^{-1-2s-1-k-m}I_{n+2s+4,\delta}.
$$ 

In the case when $1+\delta-l>0$, all the indices $1+\delta-l+e$ are a fortiori positive and, as a consequence, the induction hypothesis gives immediately: there exists a constant $C$ depending on the number of derivatives and on the bounds of the derivatives of $\alpha$ and $\beta$ such that
$$
A\leq C \lAngle v\rAngle^{\delta -2s-1-k-l-m} I_{n+2s+4, \delta} \sum_{d=0}^k \sum_{e=0}^l \sum_{a=0}^m \left(\frac{\lAngle v\rAngle}{r}\right)^{a+e+d}.
$$ 
There exists consequently, as previously, a constant $C$ depending on the number of derivatives such that
$$
A\leq C \lAngle v\rAngle ^{\delta-2s-1-k-l-m}I_{n+2s+4, \delta}.
$$ 

The other terms in \eqref{Derivativesphi0Estimate1} can be studied in a similar way and details are left to the reader. Collecting all the inequalities obtained for these derivatives, one gets that there exists a constant $C$, depending only on the Sobolev embeddings and the number of derivatives such that
$$
|D^kD'^l\snabla^m \phi_0 |\leq CI_{n+2s+4,\delta}
\left\{\begin{array}{ll}
\lAngle u\rAngle^{1+\delta-l}\lAngle v\rAngle^{-1-2s-1-k-m}&\text{ if }1+\delta-l<0\\
\lAngle v\rAngle^{-2s-1-k-l-m}&\text{ if }1+\delta-l>0.\end{array}
\right.
$$

The other components $\psi_{\bm{i}}$ of the field can be studied in a similar way. The discussion will this time occur on the sign of $1+\delta-l-\bm{i}$. These complementary computations are left to the reader.

We have now proved that the induction hypothesis holds also for $s+1/2$. We can therefore conclude that it holds for all $s\in \tfrac{1}{2}\mathbb{N}_0$.
\end{proof}

\section{Asymptotic behavior of higher spin fields using Hertz potential} \label{sec:mainres} 
This section contains the main result of the paper, which consists, for arbitrary spin, in a decay result for solutions of the Cauchy problem with initial data in weighted Sobolev spaces. This extends the result contained in \cite{Christodoulou:1990dd} for the fixed weight $\delta = -s-3/2$ (for spin-$s$ fields with $s=1,2$) and clarifies the fact that peeling fails for the rapidly decaying components of the field. Furthermore, through Theorem~\ref{th:repthm}, we establish a full correspondence between the decay result of the wave equation and the peeling result for the higher spin fields.  

Consider a free massless spin-$s$ field $\phi_{A \dots F}$, \emph{\emph{i.e.}} a symmetric valence $2s$ spinor field on Minkowski space, which solves
\begin{equation}\label{cauchyprobmassless}
 \left\{
\begin{array}{l}
\nabla^{AA'}\phi_{A\dots F}=0, \\
\phi_{A\dots F}|_{t=0}=\varphi_{A\dots F} \in H^{j}_{\delta}(S_{2s}).
\end{array}\right.
\end{equation}
For $s\geq 1$, this Cauchy problem is consistent only when the geometric constraint 
\begin{equation}\label{geometricconstraint}
D^{AB} \varphi_{ABC\dots F}= (\sdiv_{2s} \varphi)_{C\dots F} = 0
\end{equation}
is satisfied.

In this section, we first investigate which spin-$s$ fields can be represented by a potential of the form
\begin{equation}\label{representation4D}
\phi_{A \dots F} = \nabla_{AA'}\cdots\nabla_{FF'} \widetilde\chi^{A'\dots F'},
\end{equation}
where the Hertz potential satisfies a Cauchy problem
\begin{equation} \label{cauchywavechieq1} \left\{
\begin{array}{l}
\square\chi_{A\dots F}=0, \\
\chi_{A\dots F}|_{t=0}=\xi_{A\dots F}\in H^{j+2s}_{\delta+2s}(S_{2s}),\\
\partial_t\chi_{A\dots F}|_{t=0}=\sqrt{2}\zeta_{A\dots F}\in H^{j+2s-1}_{\delta+2s-1}(S_{2s}),\\
\end{array}\right.
\end{equation}
with 
\begin{equation*}
\chi_{A\dots F}\equiv \tau_{AA'}\cdots \tau_{FF'}\widetilde\chi^{A'\dots F'}.
\end{equation*}
To achieve this, a 3+1 splitting of Eq.~\ref{representation4D} with respect to the Cauchy surfaces $\{t=\text{const.}\}$ is performed in Section~\ref{sec:spacespinorsplit} so that the initial data for the field $\varphi_{A\dots F}$ and for the potential $(\xi_{A\dots F},\zeta_{A\dots F})$ are related through the operator $\mathcal{G}_{2s}$. Theorem \ref{proprepspingeneral} is then used to construct initial data for the Hertz potential and control their Sobolev norms. In Section~\ref{sec:representation}, the uniqueness of the Cauchy problem for higher spin fields ensures that this field is represented by a Hertz potential satisfying the Cauchy problem~\eqref{cauchywavechieq1} with the constructed data. Finally, in Section~\ref{sec:maintheorem}, the asymptotic behavior of the field is derived from the decay result for the scalar wave equation stated in Proposition~\ref{prop:decaywave} through the technical Proposition~\ref{prop:decay1}.

\subsection{Space spinor splitting}\label{sec:spacespinorsplit}
A 3+1 splitting of the potential equation \eqref{representation4D} is now performed. Let $\tau_{AA'}=\sqrt{2}\nabla_{AA'}t$, which is covariantly constant. The operator $D_{AB}=\tau_{(A}{}^{A'}\nabla_{B)A'}$ is valid everywhere and it coincides with the intrinsic derivative on the slices $\{t=\text{const.}\}$. We therefore can consider it as an operator acting both on space-time spinors and on spatial spinors on a time slice. All other operators defined for fields on $\mathbb{R}^3$ also extend in this way to operators on fields on Minkowski space. With this view, we have the decomposition
$\tau_{B}{}^{A'}\nabla_{AA'}=D_{AB}+\tfrac{1}{\sqrt{2}}\epsilon_{AB}\partial_t$.

The evolution equations of the Cauchy problems \eqref{cauchyprobmassless} and \eqref{cauchywavechieq1} can be re-expressed as
\begin{align}
\partial_t \phi_{A\cdots F} &= \sqrt{2} (\scurl_{2s} \phi)_{A\cdots F},\label{evolutionphieq1}\\
\partial_t \partial_t \chi_{A\cdots F} &= - \Delta_{2s}\chi_{A\cdots F}.\label{wavechieq2}
\end{align}

For the spin-$1/2$ case, we immediately get
\begin{align*}
\phi_{A} &= (\scurl_1 \chi)_{A} + \tfrac{1}{\sqrt{2}}\partial_t \chi_{A} 
=(\mathcal{G}_1\scurl_1\chi)_{A}+\tfrac{1}{\sqrt{2}}(\mathcal{G}_1\partial_t\chi)_{A}.
\end{align*}
This simple pattern in fact generalizes to arbitrary spin:
\begin{proposition}\label{prop:splittingpotential}
The equation \eqref{representation4D} together with \eqref{wavechieq2} implies
\begin{equation*}
\phi_{A_1\dots A_{2s}}=(\mathcal{G}_{2s}\scurl_{2s}\chi)_{A_1\dots A_{2s}}
+\tfrac{1}{\sqrt{2}}(\mathcal{G}_{2s}\partial_t\chi)_{A_1\dots A_{2s}}.
\end{equation*}
\end{proposition}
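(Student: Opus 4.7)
The plan is to carry out a careful 3+1 decomposition of each factor $\nabla_{AA'}$ in the potential equation
$$\phi_{A_1\dots A_{2s}} = \nabla_{A_1 A'_1}\cdots \nabla_{A_{2s}A'_{2s}}\widetilde\chi^{A'_1\dots A'_{2s}},$$
using the splitting $\nabla_{AA'}=\tfrac{1}{\sqrt{2}}\tau_{AA'}\partial_t-\tau^{B}{}_{A'}D_{AB}$, and then to use the wave equation to repackage the result into the form advertised. First I would use the invertibility of $\tau_{AA'}$ via $\tau^A{}_{A'}\widetilde\chi^{A'\cdots F'} = \chi^{A\cdots F}$ so that, whenever a primed index on $\widetilde\chi$ is contracted with a $\tau$, the result lives entirely in the space-spinor bundle and is expressible through $\chi_{A_1\dots A_{2s}}$.

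Expanding the product of $2s$ spacetime derivatives into its $2^{2s}$ pieces, each term is indexed by a subset $I\subset\{1,\dots,2s\}$ specifying which factors contribute the normal piece $\tfrac{1}{\sqrt 2}\tau_{AA'}\partial_t$ (for $i\in I$) and which contribute the tangential piece $-\tau^B{}_{A'}D_{AB}$ (for $j\notin I$). Every primed index on $\widetilde\chi$ is absorbed by a $\tau$ factor, either the $\tau_{A_iA'_i}$ in the normal part (which produces the free index $A_i$ directly on $\chi$) or the $\tau^{B_j}{}_{A'_j}$ in the tangential part (which produces the contracted index $B_j$ on $\chi$ that is then hit by $D_{A_jB_j}$). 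Because $\widetilde\chi$, and hence $\chi$, is totally symmetric, and because $\phi$ is manifestly symmetric in $A_1,\dots,A_{2s}$, the $\binom{2s}{k}$ subsets of size $k$ all contribute the same symmetrized expression, and I expect to end up with the identity
\begin{equation*}
\phi_{A_1\dots A_{2s}}=\sum_{k=0}^{2s}\binom{2s}{k}\frac{(-1)^{2s-k}}{2^{k/2}}\partial_t^k\, D_{(A_{k+1}}{}^{B_{k+1}}\!\cdots D_{A_{2s}}{}^{B_{2s}}\chi_{A_1\dots A_k)B_{k+1}\dots B_{2s}},
\end{equation*}
with the sign and normalization to be determined unambiguously from the $\tau$-calculus.

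Next I would apply the wave equation \eqref{wavechieq2} to reduce every $\partial_t^{2n}$ to $(-\Delta_{2s})^n$ and every $\partial_t^{2n+1}$ to $(-\Delta_{2s})^n\partial_t$, thereby splitting the sum into an even-$k$ part with no time derivative and an odd-$k$ part with exactly one $\partial_t$. For the even part, the innermost symmetrized spatial derivative $D_{(A_{2n+1}}{}^{B_{2n+1}}\chi_{\cdots B_{2n+1}}$ is, after one relabeling, exactly $\scurl_{2s}\chi$, so that the remaining $2s-2n-1$ symmetrized $D_{(A}{}^B$-factors together with $\Delta_{2s}^n$ reproduce the $n$-th term in the definition of $\mathcal G_{2s}$ applied to $\scurl_{2s}\chi$; for the odd part, there is no curl to extract and one gets the $n$-th term of $\mathcal G_{2s}$ applied directly to $\partial_t\chi$, with the prefactor $\tfrac{1}{\sqrt 2}$ surviving from the single normal factor.

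The main obstacle is the combinatorial identification in this last step: one must verify that the coefficients $\binom{2s}{k}2^{-k/2}(-1)^{2s-k}$ produced by the expansion, after absorbing the powers $(-1)^n$ coming from repeated use of the wave equation, match the canonical coefficients $\binom{2s}{2n+1}(-2)^{-n}$ prescribed in Definition \ref{def:mathcalg}. This is a standard binomial bookkeeping, but one has to be careful with signs from the $\tau^B{}_{A'}$ contractions and with the distinction between the symmetrized derivative pattern in the expansion and the specific symmetrization used to define $\mathcal G_{2s}$; a convenient shortcut would be to proceed by induction on $s$, peeling off one $\nabla_{FF'}$ at a time and invoking the spin-$\tfrac12$ version verified explicitly before the proposition, which uses only the commutativity of $\partial_t$ and $D_{AB}$ together with the identity $\sdiv_k\mathcal{G}_k=0$ from \eqref{eq:divGproperty}.
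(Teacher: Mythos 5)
Your overall strategy coincides with the paper's own proof (Proposition~\ref{splittingpotentialAppendix}): contract each $\nabla_{A_iA'_i}$ with $\tau^{B_iA'_i}$ to get the space-spinor operator $D_{A_i}{}^{B_i}+\tfrac{1}{\sqrt2}\epsilon_{A_i}{}^{B_i}\partial_t$ acting on $\chi_{B_1\dots B_{2s}}$, expand binomially, use $\partial_t^2\chi=-\Delta_{2s}\chi$ to reduce to at most one time derivative, and read off $\tfrac{1}{\sqrt2}\mathcal G_{2s}\partial_t\chi$ from the odd part and $\mathcal G_{2s}\scurl_{2s}\chi$ from the even part. The odd part is indeed a direct term-by-term match with Definition~\ref{def:mathcalg}.

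The gap is in the even part. The $k=2n$ term of your expansion is
$\binom{2s}{2n}(-2)^{-n}\,D_{(A_1}{}^{B_1}\cdots D_{A_{2s-2n}}{}^{B_{2s-2n}}(\Delta^n_{2s}\chi)_{A_{2s-2n+1}\dots A_{2s})B_1\dots B_{2s-2n}}$,
whereas the $n$-th term of $\mathcal G_{2s}$ applied to $\scurl_{2s}\chi$ carries the coefficient $\binom{2s}{2n+1}(-2)^{-n}$, has only $2s-2n-1$ outer derivatives, and has the symmetrization of $\scurl_{2s}$ sitting \emph{inside} the contraction with those outer derivatives. These do not agree term by term, so ``after one relabeling'' is not enough: one must partially expand the inner symmetrization, which produces cross terms via $D_{A}{}^{C}D_{BC}=-\tfrac12\epsilon_{AB}\Delta$ that shift $\Delta^n$ to $\Delta^{n+1}$, and then recombine the two resulting sums using Pascal's rule $\binom{2s-1}{2n}+\binom{2s-1}{2n-1}=\binom{2s}{2n}$. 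This is precisely the content of the paper's Lemma~\ref{Gcurlexpansion}, which your closing remark about ``standard binomial bookkeeping'' gestures at but does not carry out; as stated, the term-by-term identification would fail. The proposed inductive shortcut does not obviously rescue this either: peeling off one $\nabla_{FF'}$ leaves a mixed-valence object with $2s-1$ unprimed and one primed index, not another potential of the same form, so the spin-$\tfrac12$ case cannot be invoked as an induction step without substantial additional structure.
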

\begin{remark}
 The property $\sdiv_{2s}\mathcal{G}_{2s}=0$ of the operators directly gives that the constraint $(\sdiv_{2s} \phi)_{C\cdots F} = 0$ is automatically satisfied for $s\geq 1$.
\end{remark}
\begin{proof}
See Proposition~\ref{splittingpotentialAppendix} in the appendix for a proof.
\end{proof}

\subsection{Representation by a Hertz potential}\label{sec:representation}
  
  We will investigate under which conditions on the initial datum $\varphi_{A\dots F}$ we can construct initial data for the potential. 
We immediately see from Proposition~\ref{prop:splittingpotential} that if a potential exists, then
$\varphi_{A\dots F}$ has to be in the image of $\mathcal{G}_{2s}$. 
Therefore, we can without loss of generality choose $\xi_{A\dots F}=0$. 
This means that we have to solve the equation $\varphi_{A\dots F}   
=(\mathcal{G}_{2s}\zeta)_{A\dots F}$. Since the integrability condition  \eqref{geometricconstraint} is satisfied, Proposition~\ref{proprepspingeneral} can be used to construct $\zeta_{A\dots F}$.

A key point which will be used later to prove that the field can be represented by a potential
is the uniqueness of the Cauchy problem for first order hyperbolic systems. For such a result, the reader can refer either to \cite{MR0080849} or \cite[Appendix 4]{MR2473363}. That the massless spin-$s$ field equation has a first order symmetric hyperbolic formulation follows immediately from Equation~\eqref{evolutionphieq1} and Lemma~\ref{symbolcurlreal}.

\begin{lemma}\label{uniqueness} 
Consider a spinor field $\varphi_{A\dots F}$ in $L^2_{\text{loc}}(S_{2s})$. Then the Cauchy problem \eqref{cauchyprobmassless} admits at most one solution in $C^0(\mathbb{R}, L^2_{\text{loc}}(S_{2s}))$. 
\end{lemma}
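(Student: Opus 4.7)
The plan is to exploit the fact that the spin-$s$ equation admits a first-order symmetric hyperbolic formulation, already recalled in the text preceding the statement, and to invoke the classical Friedrichs uniqueness theorem. By linearity it will suffice to show that if $\psi_{A\dots F}\in C^0(\mathbb{R},L^2_{\text{loc}}(S_{2s}))$ satisfies $\nabla^{AA'}\psi_{A\dots F}=0$ in the sense of distributions and vanishes at $t=0$, then $\psi_{A\dots F}\equiv 0$.

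First, I would use the decomposition $\tau_B{}^{A'}\nabla_{AA'}=D_{AB}+\tfrac{1}{\sqrt{2}}\epsilon_{AB}\partial_t$ to rewrite the field equation as the evolution equation \eqref{evolutionphieq1},
$$\partial_t\psi_{A\dots F}=\sqrt{2}\,(\scurl_{2s}\psi)_{A\dots F}.$$
By Lemma~\ref{symbolcurlreal}, the principal symbol of $\sqrt{2}\,\scurl_{2s}$ is Hermitian with respect to the pointwise positive-definite inner product $\langle\phi,\psi\rangle=\phi_{A\dots F}\widehat{\psi}^{A\dots F}$, so the system is of Friedrichs symmetric hyperbolic type with constant coefficients. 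For smooth, compactly supported $\psi$, the resulting energy identity
$$\tfrac{d}{dt}\int_{\mathbb{R}^3}|\psi_{A\dots F}|^2\,\ud\mu_{\mathbb{R}^3}=0,$$
together with its version on a truncated past lightcone (obtained by integrating the same identity over a suitable spacetime region and using that the characteristic cones of the principal symbol are contained in the light-cones), yields both finite propagation speed and the pointwise vanishing of $\psi$.

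To extend the conclusion to the $L^2_{\text{loc}}$ regularity of the hypothesis, the plan is either to mollify $\psi$ in the time variable---the operator having constant coefficients, mollification commutes with it---and pass to the limit in the localized energy estimate, or equivalently to test the distributional equation against smooth compactly supported solutions of the adjoint backward Cauchy problem. Both routes are standard and correspond exactly to the content of the two references cited in the statement, namely \cite{MR0080849} and \cite[Appendix 4]{MR2473363}. The hard part, and really the only technical point, will be justifying the density step at low regularity; because the problem is linear with constant coefficients, this reduces to a routine approximation argument and no further difficulty arises beyond invoking the classical symmetric hyperbolic theory.
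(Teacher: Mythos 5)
Your proposal is correct and follows essentially the same route as the paper: the paper's proof consists of observing that Equation~\eqref{evolutionphieq1} together with Lemma~\ref{symbolcurlreal} gives a first-order symmetric hyperbolic formulation and then declaring the lemma "a direct consequence of the energy estimate," citing the same classical references for the low-regularity uniqueness theory. You have simply spelled out the steps (localized energy identity, finite propagation speed, mollification/duality to reach $C^0(\mathbb{R},L^2_{\text{loc}})$) that the paper leaves implicit.
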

\begin{proof} This lemma is a direct consequence of the energy estimate. 
\end{proof}
\begin{remark} 
This lemma does not state existence of solutions to the Cauchy problem for the massless free 
fields with initial datum in weighted Sobolev spaces. However, one can use Theorem \ref{th:repthm} to obtain existence of solutions of this Cauchy problem from standard existence theorems 
for solutions of the wave equation with initial data in weighted Sobolev spaces.
\end{remark}

We can now use Lemma~\ref{uniqueness} and Proposition~\ref{prop:splittingpotential} to reduce the 
problem of constructing a Hertz potential to the level of initial data.
\begin{lemma}\label{lem:potentialconstrainteq1}Let $j\geq 2$ be an integer and $\varphi_{A\dots F}$ be a spinor field in $H^j_\delta(S_{2s})$ satisfying the constraint equation $(\sdiv_{2s}\varphi)_{C\dots F} = 0$.
Assume that there exist spinor fields $\xi_{A\dots F}\in H^{j+2s}_{\delta+2s}(S_{2s})$ and $\zeta_{A\dots F}\in H^{j+2s-1}_{\delta+2s-1}(S_{2s})$ satisfying
\begin{equation*}
\varphi_{A\dots F}   
=(\mathcal{G}_{2s}\scurl_{2s}\xi)_{A\dots F}
+(\mathcal{G}_{2s}\zeta)_{A\dots F}.
\end{equation*}
Then the only solution to the Cauchy problem \eqref{cauchyprobmassless} for massless free fields is given by
\begin{equation*}
\phi_{A \dots F} = \nabla_{AA'}\cdots\nabla_{FF'} \widetilde\chi^{A'\dots F'},
\end{equation*}
where $\widetilde \chi^{A'\dots F'}$ is obtained through the Cauchy problem \eqref{cauchywavechieq1} for $\chi_{A\dots F}$ with the initial data $(\xi_{A\dots F}, \sqrt{2}\zeta_{A\dots F})$.
\end{lemma}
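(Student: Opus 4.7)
The plan is to construct a candidate solution via the Hertz potential formula, verify it satisfies both the massless field equation and the prescribed initial datum, and then invoke the uniqueness statement in Lemma~\ref{uniqueness} to conclude it is \emph{the} solution of \eqref{cauchyprobmassless}.

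First, I would solve the wave-equation Cauchy problem \eqref{cauchywavechieq1} for $\chi_{A\dots F}$ with initial data $(\xi_{A\dots F},\sqrt{2}\zeta_{A\dots F})$. Applying standard well-posedness for the scalar wave equation componentwise, and using that $\xi\in H^{j+2s}_{\delta+2s}$, $\zeta\in H^{j+2s-1}_{\delta+2s-1}$ with $j\geq 2$, yields a unique $\chi\in C^0(\mathbb{R},H^{j+2s}_{\mathrm{loc}})\cap C^1(\mathbb{R},H^{j+2s-1}_{\mathrm{loc}})$. Since $\tau_{AA'}$ is covariantly constant and orthonormal, inverting the relation $\chi_{A\dots F}=\tau_{AA'}\cdots\tau_{FF'}\widetilde\chi^{A'\dots F'}$ produces $\widetilde\chi^{A'\dots F'}$ with the same regularity. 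I then define
\begin{equation*}
\phi_{A\dots F}\equiv \nabla_{AA'}\cdots\nabla_{FF'}\widetilde\chi^{A'\dots F'},
\end{equation*}
which lies in $C^0(\mathbb{R},H^j_{\mathrm{loc}})\subset C^0(\mathbb{R},L^2_{\mathrm{loc}})$ and is automatically symmetric in $A\dots F$ because $\widetilde\chi$ is symmetric in $A'\dots F'$ and the Minkowski derivatives commute.

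Next, I would check that $\phi_{A\dots F}$ solves the massless spin-$s$ equation. Commuting the flat-space derivatives and using the standard flat-space identity $\nabla^{AA'}\nabla_{AB'}=\tfrac{1}{2}\delta^{A'}_{B'}\square$ on scalar components gives
\begin{equation*}
\nabla^{AA'}\phi_{A\dots F}=\tfrac{1}{2}\,\nabla_{BB'}\cdots\nabla_{FF'}\,\square\,\widetilde\chi^{A'B'\dots F'},
\end{equation*}
which vanishes because $\square\chi_{A\dots F}=0$ is equivalent to $\square\widetilde\chi^{A'\dots F'}=0$ (the map $\tau^{AA'}$ being covariantly constant). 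This is the classical Hertz construction and is the one technical point that needs to be made carefully in spinor calculus, though it is a routine computation on Minkowski space.

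For the initial condition, Proposition~\ref{prop:splittingpotential} applies at every time $t$ since by construction $\phi$ is of the form \eqref{representation4D} and $\chi$ satisfies \eqref{wavechieq2}; hence
\begin{equation*}
\phi_{A\dots F}(t)=(\mathcal{G}_{2s}\scurl_{2s}\chi(t))_{A\dots F}+\tfrac{1}{\sqrt{2}}(\mathcal{G}_{2s}\partial_t\chi(t))_{A\dots F}.
\end{equation*}
Evaluating at $t=0$ with $\chi|_{t=0}=\xi_{A\dots F}$ and $\partial_t\chi|_{t=0}=\sqrt{2}\zeta_{A\dots F}$ yields
\begin{equation*}
\phi_{A\dots F}|_{t=0}=(\mathcal{G}_{2s}\scurl_{2s}\xi)_{A\dots F}+(\mathcal{G}_{2s}\zeta)_{A\dots F}=\varphi_{A\dots F}
\end{equation*}
by the hypothesis of the lemma. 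The constructed $\phi$ therefore lies in $C^0(\mathbb{R},L^2_{\mathrm{loc}})$ and solves \eqref{cauchyprobmassless}, so Lemma~\ref{uniqueness} identifies it as the unique such solution, completing the proof. The only real obstacle is the regularity bookkeeping needed to guarantee membership in $C^0(\mathbb{R},L^2_{\mathrm{loc}})$ (so that Lemma~\ref{uniqueness} applies), which is precisely why the data for the potential are required in $H^{j+2s}_{\delta+2s}\times H^{j+2s-1}_{\delta+2s-1}$.
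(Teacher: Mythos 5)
Your proposal is correct and follows essentially the same route as the paper: construct the candidate solution from the Hertz potential solving \eqref{cauchywavechieq1}, use Proposition~\ref{prop:splittingpotential} to check that its restriction to $\{t=0\}$ equals $\varphi_{A\dots F}$, and conclude by the uniqueness statement of Lemma~\ref{uniqueness}. You merely spell out the verification that the Hertz ansatz satisfies the spin-$s$ equation (which the paper delegates to Penrose) and the regularity bookkeeping, both of which are consistent with the paper's argument.
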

\begin{proof} Let
\begin{equation*}
\widetilde\phi_{A \dots F} = \nabla_{AA'}\cdots\nabla_{FF'} \widetilde\chi^{A'\dots F'}.
\end{equation*}
It is a simple calculation to check that $\widetilde \phi_{A \dots F}$ satisfies the massless field equation of spin-$s$ (see \cite{MR0175590}, for instance). Furthermore, the restriction of $\phi_{A\dots F}$ and $\widetilde\phi_{A\dots F}$ agree on $\{t=0\}$ and are equal to $\varphi_{A\dots F}$ which lies in $H^j_\delta(S_{2s})$ and consequently in $L^2_{\delta}(S_{2s})$. Using the uniqueness stated in Lemma \ref{uniqueness}, we can conclude that both agree. 
\end{proof}
\begin{theorem}\label{th:repthm} Let $s$ be in $\tfrac{1}{2}\mathbb{N}$, $\delta$ be in $\R\setminus\mathbb{Z}$ and $j\geq2$ an integer. We consider $\varphi_{A\dots F}$ in $H^j_\delta(S_{2s})$ satisfying the constraint equation $D^{AB}\varphi_{A\dots F} =0$.
Then there exists a spinor field $\zeta_{A\dots F}$, solving the equation
$$
\varphi_{A\dots F} = (\mathcal{G}_{2s}\zeta)_{A\dots F}
$$
and satisfying the estimates
\begin{equation*}
\Vert\zeta_{A\dots F}\Vert_{j+2s-1,\delta+2s-1}\leq C \Vert\varphi_{A\dots F}\Vert_{j,\delta}.
\end{equation*}
Furthermore, the unique solution of the Cauchy problem for massless fields \eqref{cauchyprobmassless} with the initial datum $\varphi_{A\dots F}$ is given by
$$
\phi_{A\dots F} = \nabla_{AA'}\dots \nabla_{FF'}\widetilde{\chi}^{A'\dots F'},
$$
where the spinor field $\chi_{A\dots F}$, defined by
$$
\chi_{A\dots F}= \tau_{AA'}\cdots \tau_{FF'}\widetilde\chi^{A'\dots F'},
$$ satisfies the Cauchy problem \eqref{cauchywavechieq1} for the wave equation with initial data $(0,\sqrt{2}\zeta_{A\dots F})$.
\end{theorem}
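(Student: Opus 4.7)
The plan is to assemble the theorem from two results that are already in place: Proposition~\ref{proprepspingeneral} takes care of the existence and estimate for $\zeta_{A\dots F}$, and Lemma~\ref{lem:potentialconstrainteq1} converts this into the Hertz representation for $\phi_{A\dots F}$. Concretely, since the constraint $D^{AB}\varphi_{A\dots F}=0$ places $\varphi_{A\dots F}$ in $\ker\sdiv_{2s}\cap H^j_\delta(S_{2s})$, the hypotheses of Proposition~\ref{proprepspingeneral} are met and it directly produces a $\zeta_{A\dots F}\in H^{j+2s-1}_{\delta+2s-1}(S_{2s})$ with $\varphi_{A\dots F}=(\mathcal{G}_{2s}\zeta)_{A\dots F}$ and the weighted norm bound. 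This handles the first half of the conclusion with no additional work.

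For the representation of $\phi_{A\dots F}$, the idea is to apply Lemma~\ref{lem:potentialconstrainteq1} with the choice $\xi_{A\dots F}=0$ and with $\zeta_{A\dots F}$ as above. The decomposition hypothesis of the lemma, namely $\varphi_{A\dots F}=(\mathcal{G}_{2s}\scurl_{2s}\xi)_{A\dots F}+(\mathcal{G}_{2s}\zeta)_{A\dots F}$, then collapses to $\varphi_{A\dots F}=(\mathcal{G}_{2s}\zeta)_{A\dots F}$, which is precisely what Proposition~\ref{proprepspingeneral} has just supplied. To feed the lemma I need to know that the Cauchy problem \eqref{cauchywavechieq1} with data $(0,\sqrt{2}\zeta_{A\dots F})$ admits a solution $\chi_{A\dots F}$; since on Minkowski space \eqref{cauchywavechieq1} decouples component-by-component into scalar wave equations, this reduces to the standard scalar theory, and the solution inherits the relevant weighted regularity of the data. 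Lemma~\ref{lem:potentialconstrainteq1} then yields that $\phi_{A\dots F}=\nabla_{AA'}\cdots\nabla_{FF'}\widetilde\chi^{A'\dots F'}$ is the unique solution of \eqref{cauchyprobmassless}, where $\chi_{A\dots F}=\tau_{AA'}\cdots\tau_{FF'}\widetilde\chi^{A'\dots F'}$.

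There is no substantive obstacle: the real content of the theorem lies inside Proposition~\ref{proprepspingeneral} (the weighted preimage construction under $\mathcal{G}_{2s}$, with the delicate analysis of $\mathbb{E}_{s,\delta}$ in low weight ranges) and inside Lemma~\ref{lem:potentialconstrainteq1} (which leans on the $3{+}1$ splitting of Proposition~\ref{prop:splittingpotential} and on the uniqueness Lemma~\ref{uniqueness}). The only thing to double-check while assembling the pieces is that the raising/lowering relating $\chi_{A\dots F}$ and $\widetilde\chi^{A'\dots F'}$, performed with the covariantly constant timelike spinor $\tau_{AA'}=\sqrt{2}\nabla_{AA'}t$, preserves the weighted Sobolev class of the data; this is immediate since $\tau_{AA'}$ has constant Cartesian components. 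Thus the proof is essentially a one-line invocation of Proposition~\ref{proprepspingeneral} followed by a one-line invocation of Lemma~\ref{lem:potentialconstrainteq1}.
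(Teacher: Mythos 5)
Your proposal is correct and matches the paper's own proof, which is exactly the two-step assembly you describe: Proposition~\ref{proprepspingeneral} supplies $\zeta_{A\dots F}$ with the weighted estimate, and Lemma~\ref{lem:potentialconstrainteq1} (with $\xi_{A\dots F}=0$) then gives the Hertz representation via uniqueness. Your additional remarks on the solvability of the decoupled scalar wave equations and on the harmlessness of the $\tau_{AA'}$ raising/lowering are consistent with what the paper leaves implicit.
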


\begin{proof} This result is a direct consequence of
  Lemma~\ref{lem:potentialconstrainteq1}, and of Propositions~\ref{proprepspin1} and \ref{proprepspingeneral}. 
\end{proof}

\subsection{Decay result for higher spin fields}\label{sec:maintheorem}

The notations adopted in the formulation of the main theorem is consistent with the ones which are adopted in Section~\ref{geometricbackground}.

The following decay result for higher spin fields recovers the decay result obtained by Christo\-doulou and Klainerman in \cite{Christodoulou:1990dd} for the spins 1 (corresponding to the weight $\delta = -5/2$) and 2 (corresponding to the weight  $\delta = -7/2$). The main difference is the regularity of the initial data: Christodoulou and Klainerman rely on weighted Sobolev embedding to obtain their decay result, so that only two derivatives of the initial data are required. The result which is presented here is based on the decay result of solutions of the wave equation stated in Section \ref{sec:estsol} and consequently requires at least three derivatives of the initial data. This restriction can be removed as soon as a decay result for the wave equation for initial data with arbitrary decay at spatial infinity, and regularity $H^2\times H^1$, is established. The following theorem will then also hold with one derivative less in the norms.

\begin{theorem}\label{maintheorem}
Let $s$ be in $\frac12\mathbb{N}$, $\delta$ in $\mathbb{R}\backslash \mathbb{Z}$, $j> 2$ an integer and consider the Cauchy problem for the massless free spin-$s$ fields
$$\left\{
\begin{array}{l}
\nabla^{AA'}\phi_{A\dots F}=0\\
\phi_{A\dots F}|_{t=0} = \varphi_{A\dots F} \in H^j_{\delta}(S_{2s})\\
D^{AB} \varphi_{A\dots F}=0.
\end{array}\right.
$$
We finally consider three nonnegative integers $k,l,m$ whose sum is denoted by $n\leq j-3$.
The following inequalities hold, for all $\bm{i}$ in $\{0,\dots,2s\}$: there exists a constant $C$ depending only on a choice of a constant dyad and of $k,l,m$ such that:
\begin{enumerate}
\item for any $t\geq 0$, $x \in \R^3$ such that $t>3r$, that is to say, in the interior region,
$$
|\nabla^n \phi_{A\dots F}|\leq C \lAngle t\rAngle^{\delta-n} \Vert\varphi_{A\dots F}\Vert_{3+n, \delta};
$$
\item for $\bm{i}$ such that $1+2s+\delta-l-\bm{i}<0$, for any $t\geq 0$, $x \in \R^3$,  such that $3r>t>\tfrac{r}{3}$, that is to say in the exterior region,
$$
|D^kD'^l\snabla^m\phi_{\bm{i}}|\leq\frac{C \lAngle u\rAngle ^{1+\delta+2s-l-\bm{i}}}{\lAngle v\rAngle^{1+2s-\bm{i}+k+m}}\Vert\varphi_{A\dots F}\Vert_{3+n, \delta};
$$
\item for $\bm{i}$ such that $1+2s+\delta-l-\bm{i}>0$, for any $t\geq 0$, $x \in \R^3$, such that $3r>t>\tfrac{r}{3}$, that is to say in the exterior region,
$$
|D^kD'^l\snabla^m\phi_{\bm{i}}|\leq C \lAngle v\rAngle^{\delta-n} \Vert\varphi_{A\dots F}\Vert_{3+n, \delta}.
$$
\end{enumerate}
\end{theorem}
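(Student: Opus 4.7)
The plan is to derive the theorem as a synthesis of Theorem~\ref{th:repthm} (construction of a Hertz potential with controlled Sobolev data) and Proposition~\ref{prop:decay1} (pointwise decay for fields of the form $\xi_{1}^{A'}\dots\xi_{2s}^{F'}\nabla_{AA'}\dots\nabla_{FF'}\chi$ with $\chi$ a scalar wave solution). First I would invoke Theorem~\ref{th:repthm} with the datum $\varphi_{A\dots F}\in H^{3+n}_{\delta}(S_{2s})$, which yields $\zeta_{A\dots F}\in H^{n+2s+2}_{\delta+2s-1}(S_{2s})$ satisfying
\begin{equation*}
\Vert\zeta_{A\dots F}\Vert_{n+2s+2,\delta+2s-1}\leq C\Vert\varphi_{A\dots F}\Vert_{3+n,\delta},
\end{equation*}
together with the representation $\phi_{A\dots F}=\nabla_{AA'}\cdots\nabla_{FF'}\widetilde\chi^{A'\dots F'}$, where $\chi_{A\dots F}=\tau_{AA'}\cdots\tau_{FF'}\widetilde\chi^{A'\dots F'}$ solves the tensor wave equation with Cauchy data $(0,\sqrt{2}\zeta_{A\dots F})$. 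Since $\tau^{AA'}$ is covariantly constant, each component of $\widetilde\chi^{A'\dots F'}$ in a constant spin dyad satisfies the \emph{scalar} wave equation.

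The second step is to expand $\widetilde\chi^{A'\dots F'}$ over a constant primed dyad,
\begin{equation*}
\widetilde\chi^{A'\dots F'}=\sum_{I}\widetilde\chi_{I}\,\sigma_{I}^{A'\dots F'},
\end{equation*}
where $\sigma_{I}^{A'\dots F'}$ are symmetrized products of constant dyad spinors and each $\widetilde\chi_{I}$ is a complex scalar solving $\square\widetilde\chi_{I}=0$ with Cauchy data $(0,\sqrt{2}\zeta_{I})$ inheriting the weighted Sobolev bounds on $\zeta_{A\dots F}$. Linearity then gives
\begin{equation*}
\phi_{A\dots F}=\sum_{I}\sigma_{I}^{A'\dots F'}\nabla_{AA'}\cdots\nabla_{FF'}\widetilde\chi_{I},
\end{equation*}
placing each summand in exactly the form addressed by Proposition~\ref{prop:decay1}, with the $\sigma_{I}$ playing the role of the constant spinors $\xi_{k}^{A'}$.

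The third and final step is to apply Proposition~\ref{prop:decay1} with effective weight $\delta_{\text{eff}}=\delta+2s$ to each summand. The regularity hypothesis $j_{\chi}>2+2s+n$ becomes $n+2s+3>2+2s+n$, which is automatic. The substitution $\delta\mapsto\delta+2s$ in the three decay regimes of that proposition reproduces verbatim the three rates of the theorem: the interior rate $\lAngle t\rAngle^{\delta_{\text{eff}}-2s-n}=\lAngle t\rAngle^{\delta-n}$; the exterior rate under $1+\delta_{\text{eff}}-\bm{i}-l<0$, i.e.\ $1+2s+\delta-\bm{i}-l<0$, giving $\lAngle u\rAngle^{1+\delta+2s-l-\bm{i}}/\lAngle v\rAngle^{1+2s-\bm{i}+k+m}$; and the exterior rate under $1+\delta_{\text{eff}}-\bm{i}-l>0$ giving $\lAngle v\rAngle^{\delta_{\text{eff}}-2s-l-k-m}=\lAngle v\rAngle^{\delta-n}$. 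The norm $I_{n+2s+3,\delta+2s}$ of Proposition~\ref{prop:decay1} reduces to $\Vert\zeta_{A\dots F}\Vert_{n+2s+2,\delta+2s-1}$ and hence to $C\Vert\varphi_{A\dots F}\Vert_{3+n,\delta}$; summing the finitely many dyad contributions concludes.

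All the genuinely hard analysis has been front-loaded into the preparatory sections: the construction of $\zeta_{A\dots F}$ with weight gain $\delta\mapsto\delta+2s-1$ (Propositions~\ref{proprepspin1} and \ref{proprepspingeneral}) and the induction on spin underlying the derivative estimates of Proposition~\ref{prop:decay1}. The present argument is therefore essentially bookkeeping. The one point that requires vigilance is the consistent tracking of the indices $(k,l,m,\bm{i})$ through the shift $\delta\mapsto\delta+2s$, since the dichotomy defining the two exterior regimes, $1+\delta_{\text{eff}}-l-\bm{i}\lessgtr 0$, turns into $1+2s+\delta-l-\bm{i}\lessgtr 0$, and the exponents in both numerator and denominator of the exterior bounds must be verified case by case. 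No single sub-step stands out as an obstacle.
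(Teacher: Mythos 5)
Your proposal is correct and follows essentially the same route as the paper's own proof: apply Theorem~\ref{th:repthm} to produce the Hertz potential data with the weight gain $\delta\mapsto\delta+2s$, decompose the potential over a constant dyad into scalar wave solutions, and feed each component into Proposition~\ref{prop:decay1} with effective weight $\delta+2s$. The bookkeeping of the shifted dichotomy $1+2s+\delta-l-\bm{i}\lessgtr 0$ and of the regularity indices is exactly what the paper does.
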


\begin{proof} Let $s$ and $\delta$ be such as in the theorem and consider $\varphi_{A\dots F}$ a initial datum in $H^j_{\delta}(S_{2s})$ satisfying the constraints equation
$$
D^{AB}\varphi_{A\dots F}=0. 
$$
The initial datum $\varphi_{A\dots F}$ satisfies the assumptions stated in Theorem~\ref{th:repthm}, so that there exists a potential $\tilde{\chi}^{A'\dots F'}$ of order $2s$ such that the solution of the Cauchy problem with initial datum $\varphi_{A\dots F}$ is given by:
$$
\phi_{A\dots F} = \nabla_{AA'}\dots \nabla_{FF'} \tilde{\chi}^{A'\dots F'}
$$
and $\chi_{A\dots F} = \tau_{AA'}\dots \tau_{FF'} \tilde \chi^{A'\dots F'}$ satisfies the Cauchy problem
$$
\left\{
\begin{array}{l}
\square \chi_{A\dots F}=0\\
\chi_{A\dots F}|_{t=0}\in H^{j+2s}_{\delta+2s}(S_{2s})\\
\partial_t \chi_{A\dots F}|_{t=0}\in H^{j+2s-1}_{\delta+2s-1}(S_{2s}). 
\end{array}
\right.
$$
Furthermore, the norm of the potential is controlled by the norm of the initial data
$$
\Vert\chi_{A\dots F} \Vert_{3+n+2s,\delta+2s} \leq C\Vert\varphi_{A\dots F}\Vert_{3+n,\delta}.
$$

A constant dyad $(e^A_0, e^A_1)$ on the Minkowski space is chosen. The components of the field $\chi_{A\dots F}$ are then of the form $\chi \xi^1_{A}\dots \xi^{2s}_{F}$,
where the constant spinor $\xi_{A}^{i}$ (for $i\in\{1,\dots, 2s\}$) belongs to $\{e^0_{A}, e^{1}_A\}$ and $\chi$ is a complex function satisfying a Cauchy problem of the form
$$
\left\{
\begin{array}{l}
\square \chi=0 , \\
 \chi|_{t=0}\in H^{j+2s}_{\delta+2s}(\R^3, \mathbb{C}) , \\
\partial_t \chi|_{t=0} \in H^{j+2s-1}_{\delta+2s-1}(\R^3, \mathbb{C}). 
\end{array}
\right.
$$

Proposition~\ref{prop:decay1} can then be used, on each of the components of the field. All these components decay exactly in the same way and, consequently, the field $\phi_{A\dots F}$ decays exactly as the field under consideration in Proposition~\ref{prop:decay1}.
\end{proof}

\subsection*{Acknowledgments}
We thank Dietrich H\"afner, Jean-Philippe Nicolas and Lionel Mason for
helpful discussions.

\appendix

\section{Algebraic properties of the fundamental operators} \label{sec:algprop} 
To prove Proposition~\ref{prop:splittingpotential}, we need the following relation
\begin{lemma}\label{Gcurlexpansion}
The operators $\mathcal{G}_k$ and $\scurl_k$ commute and we have
\begin{eqnarray*}
(\mathcal{G}_k\scurl_k\phi)_{A_1\dots A_k}&=&(\scurl_k\mathcal{G}_k\phi)_{A_1\dots A_k}\\
&=&
\sum_{n=0}^{\lfloor \tfrac{k}{2}\rfloor}\binom{k}{2n}(-2)^{-n}\underbrace{D_{(A_1}{}^{B_1}\cdots D_{A_{k-2n}}{}^{B_{k-2n}}}_{k-2n}(\Delta^n_{k}\phi)_{A_{k-2n+1}\dots A_{k})B_1\dots B_{k-2n}}.
\end{eqnarray*}
\end{lemma}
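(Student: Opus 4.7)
Plan: For brevity, write $T_m\psi$ for the spinor
\[ (T_m\psi)_{A_1\dots A_k}:=D_{(A_1}{}^{B_1}\cdots D_{A_m}{}^{B_m}\psi_{A_{m+1}\dots A_k)B_1\dots B_m}, \qquad \psi\in\mathcal{S}_k,\ 0\le m\le k, \]
so that $\scurl_k\psi=T_1\psi$ and $\mathcal G_k\psi=\sum_n\binom{k}{2n+1}(-2)^{-n}T_{k-2n-1}\Delta^n\psi$. Two preliminary observations are needed: (a) $\Delta$ is a constant coefficient scalar differential operator and so commutes with every $T_m$ and with $\scurl_k$, in particular $\Delta^n\scurl_k=\scurl_k\Delta^n$; (b) on flat $\R^3$ the partial derivatives commute, so $\scurl_k T_m\psi=T_m\scurl_k\psi$: both sides are built from $m+1$ copies of $D_{(A}{}^B$ contracted into the fully symmetric $\psi$ and then symmetrized over all $k$ free $A$-indices, and the order in which these derivatives are arranged is immaterial. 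Together, these give $\mathcal G_k\scurl_k=\scurl_k\mathcal G_k$ term by term, reducing the problem to obtaining the explicit expansion.

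The core computation is the contraction identity
\[ T_m\scurl_k\psi=\tfrac{k-m}{k}\,T_{m+1}\psi-\tfrac{m}{2k}\,T_{m-1}\Delta\psi,\qquad 0\le m\le k. \]
I would establish it by writing $(\scurl_k\psi)_{E_1\dots E_k}=\tfrac1k\sum_{i=1}^k D_{E_i}{}^F\psi_{E_1\dots\widehat{E_i}\dots E_k F}$, substituting into $T_m\scurl_k\psi$, and partitioning the sum according to whether the inner base index $E_i$ is one of the free $A$-indices or one of the outer contraction indices $B_j$. In the first case the inner derivative merges with the outer ones and, after outer symmetrization over $A_1,\dots,A_k$, produces $T_{m+1}\psi$; by symmetry of $\psi$ the $k-m$ contributions of this kind sum to $\tfrac{k-m}{k}T_{m+1}\psi$. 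In the second case one meets the composition $D_{A_j}{}^{B_j}D_{B_j}{}^F$, which by the flat-space spinor identity $D_{(A}{}^C D_{B)C}=0$ (equivalently $D_A{}^C D_{BC}=-\tfrac12\epsilon_{AB}\Delta$ under the paper's negative definite metric convention) collapses to $-\tfrac12\delta^F_{A_j}\Delta$; the $m$ resulting terms sum to $-\tfrac{m}{2k}T_{m-1}\Delta\psi$.

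Applying the identity to $\mathcal G_k\scurl_k\phi=\sum_n\binom{k}{2n+1}(-2)^{-n}T_{k-2n-1}\scurl_k(\Delta^n\phi)$ with $m=k-2n-1$, $\psi=\Delta^n\phi$, and using the identities $\binom{k}{2n+1}(2n+1)=k\binom{k-1}{2n}$ and $\binom{k}{2n+1}(k-2n-1)=k\binom{k-1}{2n+1}$, yields
\[ \mathcal G_k\scurl_k\phi = \sum_n\binom{k-1}{2n}(-2)^{-n}T_{k-2n}\Delta^n\phi\ -\ \tfrac12\sum_n\binom{k-1}{2n+1}(-2)^{-n}T_{k-2n-2}\Delta^{n+1}\phi. \]
A shift $n\mapsto n-1$ in the second sum turns $-\tfrac12\cdot(-2)^{-(n-1)}$ into $(-2)^{-n}$, and Pascal's identity $\binom{k-1}{2n-1}+\binom{k-1}{2n}=\binom{k}{2n}$ then combines the two sums into the claimed $\sum_n\binom{k}{2n}(-2)^{-n}T_{k-2n}\Delta^n\phi$.

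The main obstacle is the bookkeeping in the contraction identity: disentangling the outer and inner symmetrizations, identifying which internal pair of $D$'s collapses to a Laplacian via $D_{(A}{}^C D_{B)C}=0$ and which survives as an additional factor in $T_{m+1}$, and pinning down the correct sign in $D_A{}^C D_{BC}$ under the paper's conventions. Once that identity is secured, the rest of the proof is a routine binomial manipulation.
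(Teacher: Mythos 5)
Your proof is correct and follows essentially the same route as the paper's: your contraction identity $T_m\scurl_k\psi=\tfrac{k-m}{k}T_{m+1}\psi-\tfrac{m}{2k}T_{m-1}\Delta\psi$ is exactly the paper's partial expansion of the inner symmetrization combined with $D_{A}{}^{C}D_{BC}=-\tfrac{1}{2}\epsilon_{AB}\Delta$, and the subsequent index shift $n\mapsto n-1$ plus Pascal's identity are identical to the paper's derivation of the stated expansion for $\mathcal{G}_k\scurl_k\phi$. The one soft spot is your observation (b): $\scurl_kT_m\psi$ and $T_m\scurl_k\psi$ are \emph{not} literally the same expression with derivatives reordered, since the intermediate symmetrizations sit in different places and each composition generates a trace term through a contraction of the form $D_{X}{}^{C}D_{C}{}^{Y}$; this is why the paper proves commutation by a second explicit expansion of $\scurl_k\mathcal{G}_k\phi$. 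The gap is easily closed with tools you already have — running your contraction computation on $\scurl_kT_m\psi$ (where again $m$ of the $k$ inner slots are derivative bases and the collapsing pair has the same form) yields the same right-hand side $\tfrac{k-m}{k}T_{m+1}\psi-\tfrac{m}{2k}T_{m-1}\Delta\psi$ — but as written the appeal to ``the order of the derivatives is immaterial'' is an assertion, not a proof.
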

\begin{proof}
We begin by proving that $(\mathcal{G}_k\scurl_k\phi)_{A_1\dots A_k}$ has the desired form.
By partially expanding the symmetry of the $\scurl_k$ operator we get
\begin{align*}
&\underbrace{D_{(A_1}{}^{B_1}\cdots D_{A_{k-2n-1}}{}^{B_{k-2n-1}}}_{k-2n-1}(\Delta^n_{k}\scurl_k\phi)_{A_{k-2n}\dots A_{k})B_1\dots B_{k-2n-1}}\nonumber\\
={}&\frac{2n+1}{k}\underbrace{D_{(A_1}{}^{B_1}\cdots D_{A_{k-2n}}{}^{B_{k-2n}}}_{k-2n}(\Delta^n_{k}\phi)_{A_{k-2n+1}\dots A_{k})B_1\dots B_{k-2n}}\nonumber\\
&+\frac{k-2n-1}{k}\underbrace{D_{(A_1}{}^{B_1}\cdots D_{A_{k-2n-1}}{}^{B_{k-2n-1}}}_{k-2n-1}D_{|B_{k-2n-1}|}{}^{B_{k-2n}}(\Delta^n_{k}\phi)_{A_{k-2n}\dots A_{k})B_1\dots B_{k-2n}}\nonumber\\
={}&\frac{2n+1}{k}\underbrace{D_{(A_1}{}^{B_1}\cdots D_{A_{k-2n}}{}^{B_{k-2n}}}_{k-2n}(\Delta^n_{k}\phi)_{A_{k-2n+1}\dots A_{k})B_1\dots B_{k-2n}}\nonumber\\
&-\frac{k-2n-1}{2k}\underbrace{D_{(A_1}{}^{B_1}\cdots D_{A_{k-2n-2}}{}^{B_{k-2n-2}}}_{k-2n-2}(\Delta^{n+1}_{k}\phi)_{A_{k-2n-1}\dots A_{k})B_1\dots B_{k-2n-2}}.
\end{align*}
Where we used $D_{A}{}^{C}D_{BC}=-\tfrac{1}{2}\epsilon_{AB}\Delta$ in the last step. We therefore get
\begin{align}
(\mathcal{G}_k&\scurl_k\phi)_{A_1\dots A_k}\nonumber\\
={}&
\sum_{n=0}^{\lfloor \tfrac{k-1}{2}\rfloor}\binom{k-1}{2n}(-2)^{-n}\underbrace{D_{(A_1}{}^{B_1}\cdots D_{A_{k-2n}}{}^{B_{k-2n}}}_{k-2n}(\Delta^n_{k}\phi)_{A_{k-2n+1}\dots A_{k})B_1\dots B_{k-2n}}\nonumber\\
&+\sum_{n=0}^{\lfloor \tfrac{k-1}{2}\rfloor}\binom{k-1}{2n+1}(-2)^{1-n}\underbrace{D_{(A_1}{}^{B_1}\cdots D_{A_{k-2n-2}}{}^{B_{k-2n-2}}}_{k-2n-2}(\Delta^{n+1}_{k}\phi)_{A_{k-2n-1}\dots A_{k})B_1\dots B_{k-2n-2}}\nonumber\\
={}&
\sum_{n=0}^{\lfloor \tfrac{k-1}{2}\rfloor}\binom{k-1}{2n}(-2)^{-n}\underbrace{D_{(A_1}{}^{B_1}\cdots D_{A_{k-2n}}{}^{B_{k-2n}}}_{k-2n}(\Delta^n_{k}\phi)_{A_{k-2n+1}\dots A_{k})B_1\dots B_{k-2n}}\nonumber\\
&+\sum_{n=1}^{\lfloor \tfrac{k+1}{2}\rfloor}\binom{k-1}{2n-1}(-2)^{-n}\underbrace{D_{(A_1}{}^{B_1}\cdots D_{A_{k-2n}}{}^{B_{k-2n}}}_{k-2n}(\Delta^{n}_{k}\phi)_{A_{k-2n+1}\dots A_{k})B_1\dots B_{k-2n}}.\label{Gkckexpr1}
\end{align}
Where we just changed $n\rightarrow n-1$ in the last sum. The Pascal triangle gives the algebraic identity
\begin{align*}
\sum_{n=0}^{\lfloor \tfrac{k-1}{2}\rfloor}\binom{k-1}{2n}A^{k-n}B^{n}
+\sum_{n=1}^{\lfloor \tfrac{k+1}{2}\rfloor}\binom{k-1}{2n-1}A^{k-n}B^{n}
=\sum_{n=0}^{\lfloor \tfrac{k}{2}\rfloor}\binom{k}{2n}A^{k-n}B^{n},
\end{align*}
which in turn gives the desired form for $(\mathcal{G}_k\scurl_k\phi)_{A_1\dots A_k}$. To handle $(\scurl_k\mathcal{G}_k\phi)_{A_1\dots A_k}$ we partially expand the symmetry in the following expression
\begin{align*}
&D_{A_1}{}^{C}\underbrace{D_{(A_2}{}^{B_2}\cdots D_{A_k-2n}{}^{B_{k-2n}}}_{k-2n-1}(\Delta_k^n\phi)_{A_{k-2n+1}\dots A_k C)B_2\dots B_{k-2n}}\nonumber\\
={}&\frac{k-2n-1}{k}D_{A_1}{}^{C}D_{C}{}^{B_2}\underbrace{D_{(A_2}{}^{B_3}\cdots D_{A_k-2n-1}{}^{B_{k-2n}}}_{k-2n-2}(\Delta_k^n\phi)_{A_{k-2n}\dots A_k)B_2\dots B_{k-2n}}\nonumber\\
&+\frac{2n+1}{k}D_{A_1}{}^{C}\underbrace{D_{(A_2}{}^{B_2}\cdots D_{A_k-2n}{}^{B_{k-2n}}}_{k-2n-1}(\Delta_k^n\phi)_{A_{k-2n+1}\dots A_k)C B_2\dots B_{k-2n}}\nonumber\\
={}&-\frac{k-2n-1}{2k}\underbrace{D_{(A_2}{}^{B_3}\cdots D_{A_k-2n-1}{}^{B_{k-2n}}}_{k-2n-2}(\Delta_k^{n+1}\phi)_{A_{k-2n}\dots A_k)A_1 B_3\dots B_{k-2n}}\nonumber\\
&+\frac{2n+1}{k}D_{A_1}{}^{B_1}\underbrace{D_{(A_2}{}^{B_2}\cdots D_{A_k-2n}{}^{B_{k-2n}}}_{k-2n-1}(\Delta_k^n\phi)_{A_{k-2n+1}\dots A_k)B_1\dots B_{k-2n}}.
\end{align*}
Where we in the last step again used  $D_{A}{}^{C}D_{BC}=-\tfrac{1}{2}\epsilon_{AB}\Delta$. Using this in the definition of $\mathcal{G}_k$ yields
\begin{align*}
D_{A_1}{}^C&(\mathcal{G}_k\phi)_{CA_2\dots A_k}\nonumber\\
={}&\sum_{n=0}^{\lfloor \tfrac{k-1}{2}\rfloor}\binom{k-1}{2n+1}(-2)^{1-n}\underbrace{D_{(A_2}{}^{B_3}\cdots D_{A_{k-2n-1}}{}^{B_{k-2n-2}}}_{k-2n-2}(\Delta^{n+1}_{k}\phi)_{A_{k-2n}\dots A_{k})A_1B_3\dots B_{k-2n}}\nonumber\\
&+\sum_{n=0}^{\lfloor \tfrac{k-1}{2}\rfloor}\binom{k-1}{2n}(-2)^{1-n}D_{A_1}{}^{B_1}\underbrace{D_{(A_2}{}^{B_2}\cdots D_{A_{k-2n}}{}^{B_{k-2n}}}_{k-2n-1}(\Delta^{n}_{k}\phi)_{A_{k-2n+1}\dots A_{k})B_1\dots B_{k-2n}}.
\end{align*}
After symmetrization we get that $(\scurl_k\mathcal{G}_k\phi)_{A_1\dots A_k}$ has an expansion identical to the one in the first equation in \eqref{Gkckexpr1}. This gives the desired result.
\end{proof}

\begin{proposition}\label{splittingpotentialAppendix}
The equation \eqref{representation4D} together with \eqref{wavechieq2} implies
\begin{equation*}
\phi_{A_1\dots A_{2s}}=(\mathcal{G}_{2s}\scurl_{2s}\chi)_{A_1\dots A_{2s}}
+\tfrac{1}{\sqrt{2}}(\mathcal{G}_{2s}\partial_t\chi)_{A_1\dots A_{2s}}.
\end{equation*}
\end{proposition}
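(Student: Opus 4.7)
The strategy is to use the space spinor splitting
\begin{equation*}
\nabla_{AA'}=\tfrac{1}{\sqrt{2}}\tau_{AA'}\partial_t-\tau^{B}{}_{A'}D_{AB}
\end{equation*}
given in Section~\ref{sec:conventions}, substitute it into the representation formula $\phi_{A_1\dots A_{2s}}=\nabla_{A_1A'_1}\cdots\nabla_{A_{2s}A'_{2s}}\widetilde\chi^{A'_1\dots A'_{2s}}$, expand by multilinearity, and then invoke the wave equation $\partial_t^2\chi=-\Delta_{2s}\chi$ to eliminate all time derivatives of order $\geq 2$. This reduces the expression to a sum of two purely spatial differential operators applied respectively to $\chi$ and to $\partial_t\chi$, which I will then match against the combinatorial definitions of $\mathcal{G}_{2s}\scurl_{2s}$ and $\mathcal{G}_{2s}$.

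The first concrete step is the expansion. Each of the $2s$ factors of the nabla-product contributes either the $\partial_t$-piece (with a $\tau_{A_kA'_k}$) or the $D$-piece (with a $-\tau^{B_k}{}_{A'_k}D_{A_kB_k}$). Grouping by the number $m$ of $D$-factors, contracting the resulting $\tau$-tensors against $\widetilde\chi^{A'_1\dots A'_{2s}}$ by means of the normalization identities between $\tau$ and the space-spinor conversion (so as to reproduce $\chi_{A\dots F}=\tau_{AA'}\cdots\tau_{FF'}\widetilde\chi^{A'\dots F'}$), and then symmetrizing over $A_1,\dots,A_{2s}$ since $\phi$ is totally symmetric, I expect to obtain
\begin{equation*}
\phi_{A_1\dots A_{2s}}
=\sum_{m=0}^{2s}\binom{2s}{m}(-1)^{m}\bigl(\tfrac{1}{\sqrt{2}}\bigr)^{2s-m}\,\underbrace{D_{(A_1}{}^{B_1}\cdots D_{A_m}{}^{B_m}}_{m}\,\partial_t^{2s-m}\chi_{A_{m+1}\dots A_{2s})B_1\dots B_m}.
\end{equation*}

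The second step is the reduction via the wave equation. Splitting the sum according to the parity of $2s-m$, I use $\partial_t^{2j}\chi=(-\Delta_{2s})^j\chi$ and $\partial_t^{2j+1}\chi=(-\Delta_{2s})^j\partial_t\chi$ to express the even-parity part as a spatial operator acting on $\chi$ and the odd-parity part as a spatial operator acting on $\partial_t\chi$. Reparametrizing by the power of $\Delta_{2s}$, the even part produces the coefficient pattern $\binom{2s}{2n}(-2)^{-n}$ with $2s-2n$ factors of $D_{A}{}^{B}$, which is precisely the expansion of $(\mathcal{G}_{2s}\scurl_{2s}\chi)_{A_1\dots A_{2s}}$ given in Lemma~\ref{Gcurlexpansion}; the odd part, after factoring out $\tfrac{1}{\sqrt{2}}\partial_t$, produces the coefficient pattern $\binom{2s}{2n+1}(-2)^{-n}$ with $2s-2n-1$ factors of $D_{A}{}^{B}$, which matches the definition of $\mathcal{G}_{2s}$ in Definition~\ref{def:mathcalg}. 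Combining the two gives the claimed identity.

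The main obstacle is the bookkeeping in the first step: tracking the interplay between the $(-1)^{m}$ from the $D$-branch of the splitting, the $(\tfrac{1}{\sqrt{2}})^{2s-m}$ from the $\partial_t$-branch, and the various sign choices forced by how the $\tau$-factors are contracted against $\widetilde\chi$ and reorganized to produce $\chi$ with the correct symmetrized index placement. In particular, the reduction via $\partial_t^2\chi=-\Delta_{2s}\chi$ interacts with the sign appearing in $D_{A}{}^{C}D_{BC}=-\tfrac12\epsilon_{AB}\Delta$ used elsewhere (for instance in the proof of Lemma~\ref{Gcurlexpansion}), and one must verify that these signs conspire to produce exactly the factor $(-2)^{-n}$ rather than some other power of $2$. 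Once the combinatorial identity is in place, matching against Definition~\ref{def:mathcalg} and Lemma~\ref{Gcurlexpansion} completes the proof.
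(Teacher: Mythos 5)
Your strategy coincides with the paper's: split each $\nabla_{AA'}$ into a time part and a spatial part, expand the $2s$-fold product binomially, eliminate $\partial_t^2$ via \eqref{wavechieq2}, and match the even and odd parts against Lemma~\ref{Gcurlexpansion} and Definition~\ref{def:mathcalg}. However, the intermediate formula you write down carries a spurious factor $(-1)^{m}$, and this is not a harmless bookkeeping detail: it breaks the final matching. The minus sign in $\nabla_{AA'}=\tfrac{1}{\sqrt{2}}\tau_{AA'}\partial_t-\tau^{B}{}_{A'}D_{AB}$ is exactly cancelled by the $\epsilon$-antisymmetry when the contracted index is raised/lowered in converting $\widetilde\chi^{A'\dots F'}$ into $\chi_{A\dots F}=\tau_{AA'}\cdots\tau_{FF'}\widetilde\chi^{A'\dots F'}$; the clean form of the splitting, which the paper uses, is
\begin{equation*}
\tau_{B}{}^{A'}\nabla_{AA'}=D_{AB}+\tfrac{1}{\sqrt{2}}\epsilon_{AB}\partial_t,
\end{equation*}
with a \emph{plus} sign on $D_{AB}$. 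The correct expansion is therefore
\begin{equation*}
\phi_{A_1\dots A_{2s}}=\sum_{n=0}^{2s}\binom{2s}{n}2^{-n/2}\,\underbrace{D_{(A_1}{}^{B_1}\cdots D_{A_{2s-n}}{}^{B_{2s-n}}}_{2s-n}\,\partial_t^{n}\chi_{A_{2s-n+1}\dots A_{2s})B_1\dots B_{2s-n}},
\end{equation*}
with all coefficients positive. If you keep your $(-1)^{m}$ and push through $\partial_t^{2j}\chi=(-1)^{j}\Delta_{2s}^{j}\chi$, the odd-in-$\partial_t$ part acquires an overall factor $(-1)^{2s-1}$ relative to $\tfrac{1}{\sqrt{2}}(\mathcal{G}_{2s}\partial_t\chi)$ and the even part a factor $(-1)^{2s}$ relative to $(\mathcal{G}_{2s}\scurl_{2s}\chi)$, so for every $s$ exactly one of the two terms comes out with the wrong sign and the claimed identity fails. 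Once the expansion is corrected, the rest of your plan --- the parity split, the reindexing by powers of $\Delta_{2s}$, and the matching against Definition~\ref{def:mathcalg} and Lemma~\ref{Gcurlexpansion} --- goes through exactly as in the paper.
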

\begin{proof}
Using $\tau_{B}{}^{A'}\nabla_{AA'}=D_{AB}+\tfrac{1}{\sqrt{2}}\epsilon_{AB}\partial_t$ we can write the potential equation in terms of $D_{AB}$ and $\partial_t$. We have 
\begin{align*}
\phi_{A_1\dots A_{2s}}
={}&\underbrace{\nabla_{A_1A'_1}\cdots \nabla_{A_{2s}A'_{2s}}}_{2s}\widetilde\chi^{A'_1\dots A'_{2s}}
=\underbrace{\tau^{B_1A'_1}\nabla_{A_1A'_1}\cdots \tau^{B_{2s}A'_{2s}}\nabla_{A_{2s}A'_{2s}}}_{2s}\chi_{B_1\dots B_{2s}}\nonumber\\
={}&\underbrace{(D_{A_1}{}^{B_1}+\tfrac{1}{\sqrt{2}}\epsilon_{A_1}{}^{B_1}\partial_t)\cdots (D_{A_{2s}}{}^{B_{2s}}+\tfrac{1}{\sqrt{2}}\epsilon_{A_{2s}}{}^{B_{2s}}\partial_t)}_{2s}\chi_{B_1\dots B_{2s}}\nonumber\\
={}&\sum_{n=0}^{2s}\binom{2s}{n}2^{-n/2}\underbrace{D_{(A_1}{}^{B_1}\cdots D_{A_{2s-n}}{}^{B_{2s-n}}}_{2s-n}\partial_t^n\chi_{A_{2s-n+1}\dots A_{2s})B_1\dots B_{2s-n}}.
\end{align*}
We can now use \eqref{wavechieq2} to eliminate all higher order time derivatives. This gives 
\begin{align*}
&\negthickspace\negthickspace\negthickspace\phi_{A_1\dots A_{2s}}\nonumber\\
={}&\sum_{n=0}^{\lfloor s\rfloor}\binom{2s}{2n}2^{-n}\underbrace{D_{(A_1}{}^{B_1}\cdots D_{A_{2s-2n}}{}^{B_{2s-2n}}}_{2s-2n}\partial_t^{2n}\chi_{A_{2s-2n+1}\dots A_{2s})B_1\dots B_{2s-2n}}\nonumber\\
&+\sum_{n=0}^{\lfloor s-\tfrac{1}{2}\rfloor}\binom{2s}{2n+1}2^{-n-1/2}\underbrace{D_{(A_1}{}^{B_1}\cdots D_{A_{2s-2n-1}}{}^{B_{2s-2n-1}}}_{2s-2n-1}\partial_t^{2n+1}\chi_{A_{2s-2n}\dots A_{2s})B_1\dots B_{2s-2n-1}}\nonumber\\
={}&\sum_{n=0}^{\lfloor s\rfloor}\binom{2s}{2n}(-2)^{-n}\underbrace{D_{(A_1}{}^{B_1}\cdots D_{A_{2s-2n}}{}^{B_{2s-2n}}}_{2s-2n}(\Delta^n_{2s}\chi)_{A_{2s-2n+1}\dots A_{2s})B_1\dots B_{2s-2n}}\nonumber\\
&+\tfrac{1}{\sqrt{2}}\sum_{n=0}^{\lfloor s-\tfrac{1}{2}\rfloor}\binom{2s}{2n+1}(-2)^{-n}\underbrace{D_{(A_1}{}^{B_1}\cdots D_{A_{2s-2n-1}}{}^{B_{2s-2n-1}}}_{2s-2n-1}(\Delta^n_{2s}\partial_t\chi)_{A_{2s-2n}\dots A_{2s})B_1\dots B_{2s-2n-1}}\nonumber\\
={}&(\mathcal{G}_{2s}\scurl_{2s}\chi)_{A_1\dots A_{2s}}
+\tfrac{1}{\sqrt{2}}(\mathcal{G}_{2s}\partial_t\chi)_{A_1\dots A_{2s}}.
\end{align*}
In the last step we used the definition of $\mathcal{G}_{2s}$ and Lemma~\ref{Gcurlexpansion}. In fact we have defined $\mathcal{G}_k$ to match the $\partial_t$ part of this expression.
\end{proof}

\begin{proposition}\label{divGproperty}
For $k\geq 2$, the operators $\mathcal{G}_k$ have the properties $\sdiv_k\mathcal{G}_k=0$ and $\mathcal{G}_k\stwist_{k-2}=0$.
\end{proposition}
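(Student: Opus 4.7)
The plan is to reduce both identities to the basic spinor contraction relation
$$D_A{}^{C}D_{BC}=-\tfrac12\epsilon_{AB}\Delta,$$
used in the proof of Lemma~\ref{Gcurlexpansion}, which says that two derivatives sharing a contracted index combine into an antisymmetric factor times a single Laplacian. Since the Levi-Civita connection on $\mathbb{R}^3$ is flat, the $D_{AB}$ commute and $\sdiv_k$, $\stwist_{k-2}$, $\mathcal{G}_k$ are constant-coefficient differential operators, so their composition can be analyzed by purely algebraic manipulation of the defining polynomial in $D_{AB}$ and $\Delta$.

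For $\sdiv_k\mathcal{G}_k=0$ I would apply $D^{XY}$ to $(\mathcal{G}_k\phi)_{A_1\dots A_k}$ and expand the symmetrization over the $k$ free indices $X,Y,A_1,\dots,A_{k-2}$. Each choice of slots for $X,Y$ falls into one of three classes depending on how they meet the ``$D$-block'' (size $k-2n-1$) and the ``$\phi$-block'' (size $2n+1$) of the $n$-th summand. A direct consequence of the basic identity is
$$D^{XY}D_X{}^{B_i}=-\tfrac12\epsilon^{YB_i}\Delta.$$
The mixed class (one slot on a $D$-factor, the other on a $\phi$-index) then produces $-\tfrac12\Delta^{n+1}\epsilon^{YB_i}\phi_{Y\dots B_i\dots}$, which vanishes because $\phi$ is symmetric in $Y,B_i$ while $\epsilon^{YB_i}$ is antisymmetric. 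The double-$D$ class reduces to $-\tfrac12\Delta^{n+1}D^{B_iB_j}\phi_{\dots B_iB_j\dots}$, structurally a $D^{k-2n-3}\Delta^{n+1}\sdiv_k\phi$ contribution; the double-$\phi$ class gives $D^{k-2n-1}\Delta^n\sdiv_k\phi$, structurally a contribution with one fewer Laplacian. After reindexing, the coefficient of each distinct $D^{k-2m-1}\Delta^m\sdiv_k\phi$-term is the sum of a double-$D$ contribution at level $n=m-1$ and a double-$\phi$ contribution at level $n=m$, weighted by the combinatorial factors from the symmetrization and by $\binom{k}{2n+1}(-2)^{-n}$. The exact cancellation reduces to the binomial identity
$$\binom{k}{2n+3}=\binom{k}{2n+1}\cdot\frac{(k-2n-1)(k-2n-2)}{(2n+2)(2n+3)},$$
giving $\sdiv_k\mathcal{G}_k\phi=0$.

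For $\mathcal{G}_k\stwist_{k-2}=0$ I would invoke formal adjointness. Under the Hermitian $L^2$-pairing of Section~\ref{sec:conventions} the real operator $D_{AB}$ is formally self-adjoint, because the sign from integration by parts is cancelled by the sign in $D_{AB}\widehat\phi=-\widehat{D_{AB}\phi}$ (Remark~\ref{rem:DReal}); hence $\mathcal{G}_k$, built from $D_{AB}$'s and $\Delta$ with a symmetric structure, is itself formally self-adjoint. Lemma~\ref{orthogonalitylemma} already shows that $\sdiv_k$ and $\stwist_{k-2}$ are formal adjoints (with no sign). Therefore, for smooth $\eta$ and compactly supported test $\phi$,
$$\lAngle\mathcal{G}_k\stwist_{k-2}\eta,\phi\rAngle_{L^2}=\lAngle\stwist_{k-2}\eta,\mathcal{G}_k\phi\rAngle_{L^2}=\lAngle\eta,\sdiv_k\mathcal{G}_k\phi\rAngle_{L^2}=0,$$
and density of compactly supported fields forces $\mathcal{G}_k\stwist_{k-2}\eta=0$ pointwise.

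The main obstacle is the combinatorial bookkeeping in the first part: correctly weighting each of the three classes after the symmetrization over $k$ indices, and verifying that the double-$D$ and double-$\phi$ contributions cancel via the single binomial identity above. Once the first identity is in place, the adjointness argument for the second is immediate.
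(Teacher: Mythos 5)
Your argument for $\sdiv_k\mathcal{G}_k=0$ is essentially the paper's: both proceed by partially expanding the symmetrization in the definition of $\mathcal{G}_k$, sorting the placements of the two contracted slots into classes, killing the mixed class via $D^{A_1(A_2}D_{A_1}{}^{B_2)}=0$ (antisymmetric $\epsilon$ against the symmetric $\phi$), converting the double-$D$ class into $\tfrac12 D^{B_1B_2}\Delta$, and observing that the two surviving families of terms telescope after a shift $n\to n\pm1$; your binomial identity is the same cancellation written multiplicatively. Where you genuinely depart from the paper is the second identity: the paper proves $\mathcal{G}_k\stwist_{k-2}=0$ by a second, independent direct expansion (partially expanding the symmetrization inside $\stwist_{k-2}$ and telescoping again), whereas you derive it from the first identity by formal adjointness. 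Your duality argument is valid and avoids a second round of combinatorics, at the cost of requiring the adjointness facts; the paper's route keeps everything purely algebraic and pointwise. One correction to your adjointness claim: $\mathcal{G}_{2s}$ is not formally self-adjoint but formally \emph{skew}-adjoint --- the paper's own Lemma~\ref{lemma:orthogonalpreimage} records
\begin{equation*}
\lAngle(\mathcal{G}_{2s}\eta)_{A\dots F},\xi_{A\dots F}\rAngle_{L^2}=-\lAngle\eta_{A\dots F},(\mathcal{G}_{2s}\xi)_{A\dots F}\rAngle_{L^2},
\end{equation*}
so your chain of equalities acquires an overall sign. This is harmless, since the conclusion is that the pairing vanishes for every compactly supported test field, and positive-definiteness of the Hermitian pairing then forces the smooth field $\mathcal{G}_k\stwist_{k-2}\eta$ to vanish pointwise; but the self-adjointness assertion as stated is false and should either be corrected to skew-adjointness or replaced by the neutral statement that $\mathcal{G}_k^*=\pm\mathcal{G}_k$.
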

\begin{proof}
First we prove that $\sdiv_k\mathcal{G}_k=0$.
By partially expanding the symmetrization in the definition of $\mathcal{G}_k$ and restricting the summation to non-vanishing terms we get
\begin{align*}(\mathcal{G}_k&\phi)_{A_1\dots A_k}\nonumber\\
={}& \sum_{n=0}^{\mathclap{\left\lfloor\tfrac{k-3}{2}\right\rfloor}}
\binom{k-2}{2n+1}(-2)^{-n} D_{A_1}{}^{B_1}D_{A_2}{}^{B_2}\underbrace{D_{(A_3}{}^{B_3}\cdots D_{A_{k-2n-1}}{}^{B_{k-2n-1}}}_{k-2n-3} (\Delta^n_k\phi)_{A_{k-2n}\dots A_k)B_1\dots B_{k-2n-1}}\nonumber\\
&+ \sum_{n=0}^{\mathclap{\left\lfloor\tfrac{k-1}{2}\right\rfloor}}
\binom{k-2}{2n}(-2)^{-n} D_{A_1}{}^{B_2}\underbrace{D_{(A_3}{}^{B_3}\cdots D_{A_{k-2n}}{}^{B_{k-2n}}}_{k-2n-2} (\Delta^n_k\phi)_{A_{k-2n+1}\dots A_k)A_2B_2\dots B_{k-2n}}\nonumber\\
&+ \sum_{n=0}^{\mathclap{\left\lfloor\tfrac{k-1}{2}\right\rfloor}}
\binom{k-2}{2n}(-2)^{-n} D_{A_2}{}^{B_2}\underbrace{D_{(A_3}{}^{B_3}\cdots D_{A_{k-2n}}{}^{B_{k-2n}}}_{k-2n-2} (\Delta^n_k\phi)_{A_{k-2n+1}\dots A_k)A_1B_2\dots B_{k-2n}}\nonumber\\
&+ \sum_{n=1}^{\mathclap{\left\lfloor\tfrac{k-1}{2}\right\rfloor}}
\binom{k-2}{2n-1}(-2)^{-n} \underbrace{D_{(A_3}{}^{B_3}\cdots D_{A_{k-2n+1}}{}^{B_{k-2n+1}}}_{k-2n-1} (\Delta^n_k\phi)_{A_{k-2n+2}\dots A_k)A_1A_2B_3\dots B_{k-2n+1}}.
\end{align*}
Using $D_{A}{}^{C}D_{BC}=-\tfrac{1}{2}\epsilon_{AB}\Delta$ we get $D^{A_1A_2}D_{A_1}{}^{B_1}D_{A_2}{}^{B_2}=\tfrac{1}{2}D^{B_1B_2}\Delta$, $D^{A_1(A_2}D_{A_1}{}^{B_2)}=0$ and
\begin{align*}
(\sdiv_k&\mathcal{G}_k\phi)_{A_1\dots A_k}\nonumber\\
={}& -\sum_{n=0}^{\mathclap{\left\lfloor\tfrac{k-3}{2}\right\rfloor}}
\binom{k-2}{2n+1}(-2)^{-n-1} D^{B_1B_2}\underbrace{D_{(A_3}{}^{B_3}\cdots D_{A_{k-2n-1}}{}^{B_{k-2n-1}}}_{k-2n-3} (\Delta^{n+1}_k\phi)_{A_{k-2n}\dots A_k)B_1\dots B_{k-2n-1}}\nonumber\\
&+ \sum_{n=1}^{\mathclap{\left\lfloor\tfrac{k-1}{2}\right\rfloor}}
\binom{k-2}{2n-1}(-2)^{-n}D^{B_1B_2} \underbrace{D_{(A_3}{}^{B_3}\cdots D_{A_{k-2n+1}}{}^{B_{k-2n+1}}}_{k-2n-1} (\Delta^n_k\phi)_{A_{k-2n+2}\dots A_k)B_1\dots B_{k-2n+1}}.
\end{align*}
The first sum is identical to the second sum after a variable change $n\rightarrow n-1$, hence $\sdiv_k\mathcal{G}_k=0$.

Now, we turn to the proof of $\mathcal{G}_k\stwist_{k-2}=0$. Partial expansion of the symmetrization in the definition of $\stwist_{k-2}$ gives
\begin{align*}
&\underbrace{D_{(A_1}^{B_1}\cdots D_{A_{k-2n-1}}{}^{B_{k-2n-1}}}_{k-2n-1}(\Delta^n_k\stwist_{k-2}\phi)_{A_{k-2n}\dots A_{k})B_1\dots B_{k-2n-1}}\nonumber\\
={}&\frac{(k-2n-1)(k-2n-2)}{k(k-1)}D_{B_1B_2}\underbrace{D_{(A_1}{}^{B_1}\cdots D_{A_{k-2n-1}}{}^{B_{k-2n-1}}}_{k-2n-1}(\Delta^n_{k-2}\phi)_{A_{k-2n}\dots A_{k})B_3\dots B_{k-2n-1}}\nonumber\\
&+\frac{2(k-2n-1)(2n+1)}{k(k-1)}D_{B_1(A_k}\underbrace{D_{A_1}{}^{B_1}\cdots D_{A_{k-2n-1}}{}^{B_{k-2n-1}}}_{k-2n-1}(\Delta^n_{k-2}\phi)_{A_{k-2n}\dots A_{k-1})B_2\dots B_{k-2n-1}}\nonumber\\
&+\frac{2n(2n+1)}{k(k-1)}D_{(A_{k-1}A_k}\underbrace{D_{A_1}{}^{B_1}\cdots D_{A_{k-2n-1}}{}^{B_{k-2n-1}}}_{k-2n-1}(\Delta^n_{k-2}\phi)_{A_{k-2n}\dots A_{k-2})B_1\dots B_{k-2n-1}}\nonumber\\
={}&\frac{(k-2n-1)(k-2n-2)}{2k(k-1)}D_{(A_1A_2}\underbrace{D_{A_3}{}^{B_3}\cdots D_{A_{k-2n-1}}{}^{B_{k-2n-1}}}_{k-2n-3}(\Delta^{n+1}_{k-2}\phi)_{A_{k-2n}\dots A_{k})B_3\dots B_{k-2n-1}}\nonumber\\
&+\frac{2n(2n+1)}{k(k-1)}D_{(A_{k-1}A_k}\underbrace{D_{A_1}{}^{B_1}\cdots D_{A_{k-2n-1}}{}^{B_{k-2n-1}}}_{k-2n-1}(\Delta^n_{k-2}\phi)_{A_{k-2n}\dots A_{k-2})B_1\dots B_{k-2n-1}}.
\end{align*}
Where we again used $D^{A_1A_2}D_{A_1}{}^{B_1}D_{A_2}{}^{B_2}=\tfrac{1}{2}D^{B_1B_2}\Delta$ and $D^{A_1(A_2}D_{A_1}{}^{B_2)}=0$.
We therefore get
\begin{align*}
(\mathcal{G}_k&\stwist_{k-2}\phi)_{A_1\dots A_k}\nonumber\\
={}&-\sum_{n=0}^{\mathclap{\left\lfloor\tfrac{k-3}{2}\right\rfloor}}\binom{k-2}{2n+1}(-2)^{1-n} D_{(A_1A_2}\underbrace{D_{A_3}{}^{B_3}\cdots D_{A_{k-2n-1}}{}^{B_{k-2n-1}}}_{k-2n-3}(\Delta^{n+1}_{k-2}\phi)_{A_{k-2n}\dots A_{k})B_3\dots B_{k-2n-1}}\nonumber\\
&\negthickspace\negmedspace{}+\sum_{n=1}^{\mathclap{\left\lfloor\tfrac{k-1}{2}\right\rfloor}}\binom{k-2}{2n-1}(-2)^{-n}D_{(A_{k-1}A_k}\underbrace{D_{A_1}{}^{B_1}\cdots D_{A_{k-2n-1}}{}^{B_{k-2n-1}}}_{k-2n-1}(\Delta^n_{k-2}\phi)_{A_{k-2n}\dots A_{k-2})B_1\dots B_{k-2n-1}}.
\end{align*}
The first sum is identical to the second sum after a variable change $n\rightarrow n-1$, hence $\mathcal{G}_k\stwist_{k-2}=0$.
\end{proof}

To use elliptic theory, we need well behaved elliptic operators. $\mathcal{G}_k$ is in general not elliptic but, through the following lemma, it can related to some power of the Laplacian -- which of course is elliptic.
\begin{lemma}\label{lapspinor}
The formulae \eqref{LaplacianAsGodd} and \eqref{LaplacianAsGeven} hold, that is to say
\begin{align*}
(\Delta^{k}_{2k}\phi)_{A_1\dots A_{2k}}={}&(\stwist_{2k-2}\mathcal{F}_{2k-2}\sdiv_{2k}\phi)_{A_1\dots A_{2k}}-(-2)^{1-k}(\mathcal{G}_{2k}\scurl_{2k}\phi)_{A_1\dots A_{2k}},\\
 (\Delta^{k}_{2k+1}\phi)_{A_{1}\dots A_{2k+1}}={}&
(\stwist_{2k-1}\mathcal{F}_{2k-1}\sdiv_{2k+1}\phi)_{A_1\dots A_{2k+1}}
+(-2)^{-k}(\mathcal{G}_{2k+1}\phi)_{A_1\dots A_{2k+1}} .
\end{align*}
\end{lemma}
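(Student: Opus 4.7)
The strategy will be to verify both identities as purely algebraic relations in the commutative polynomial algebra of constant-coefficient differential operators generated by the pairwise commuting derivations $D_{AB}$ on flat $\mathbb{R}^3$; no curvature terms arise, so the lemma is entirely combinatorial. The plan is to expand both sides into sums of normal-form monomials of the schematic type
\[
T_n \;\equiv\; D_{(A_1}{}^{B_1}\cdots D_{A_{2k-2n}}{}^{B_{2k-2n}}(\Delta^n_N\phi)_{A_{2k-2n+1}\dots A_N)B_1\dots B_{2k-2n}}
\]
(so that $T_n$ has $2k-2n$ free-lowered and $2k-2n$ contracted derivative indices plus an $n$-th power of $\Delta$, giving a total of $2k$ derivatives), and then to match coefficients. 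Note that $\Delta^k_N\phi$ is itself the top $T_k$ in both the integer ($N=2k$) and half-integer ($N=2k+1$) cases.

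For the $(\mathcal{G}_N\scurl_N\phi)$ term (integer spin) and for $(\mathcal{G}_N\phi)$ itself (half-integer spin), the expansion is available directly: Lemma~\ref{Gcurlexpansion} gives $\mathcal{G}_{2k}\scurl_{2k}\phi=\sum_n\binom{2k}{2n}(-2)^{-n}T_n$, while the definition of $\mathcal{G}_N$ in Section~\ref{sec:fundop} yields $\mathcal{G}_{2k+1}\phi=\sum_n\binom{2k+1}{2n+1}(-2)^{-n}T_n$ in the odd valence case. The substantive task is to expand $\stwist_{N-2}\mathcal{F}_{N-2}\sdiv_N\phi$: substituting the double sum defining $\mathcal{F}_{N-2}$, together with $\sdiv_N\phi=D^{CD}\phi_{\ldots CD}$ and $\stwist_{N-2}X=D_{(A_1A_2}X_{A_3\dots A_N)}$, produces a multiply indexed sum of terms of the schematic form $D_{(A_1A_2}\cdots D_{A_i}{}^{B_i}\cdots D^{CD}(\Delta^p\phi)_{\ldots CD)}$ with appropriate symmetrization. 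Repeated use of the key antisymmetric identity $D_A{}^CD_{BC}=-\tfrac{1}{2}\epsilon_{AB}\Delta$ --- the same identity driving the proofs of Lemma~\ref{Gcurlexpansion} and Proposition~\ref{divGproperty} --- converts each antisymmetrized pair of derivatives into an extra Laplacian, and what remains assembles into the $T_n$'s.

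Matching coefficients then reduces the proof to a finite family of Pascal-type identities: the $T_n$ coefficient on the right-hand side must vanish for $0\le n\le k-1$ and must equal $1$ for $n=k$. The doubly-indexed binomials $\binom{2s+2}{2n+2m+2}$ appearing in the definition of $\mathcal{F}_{2s}$ have been chosen precisely so that the required matching holds: the inner double sum telescopes, and its contribution complements the $\mathcal{G}_N\scurl_N\phi$ (respectively $\mathcal{G}_N\phi$) expansion to reproduce $\Delta^k_N\phi$ exactly. The main obstacle is therefore combinatorial bookkeeping rather than any conceptual difficulty: the nested symmetrizations, the additional pair of indices introduced by $\stwist_{N-2}$, the double sum in $\mathcal{F}_{N-2}$, and the antisymmetric collapses into extra Laplacians all have to be tracked simultaneously.

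A cleaner alternative is induction on $k$. The base case $k=1$ can be read off from the low-order formulas displayed immediately after Definition~\ref{def:mathcalg}. For the inductive step one applies $\Delta_N$ to the identity for $k-1$, and uses the commutation of $\Delta$ with every fundamental operator together with the vanishing relations $\sdiv\mathcal{G}=0$ and $\mathcal{G}\stwist=0$ from Proposition~\ref{divGproperty} to rearrange the result into the claimed form. Either route produces the stated identities.
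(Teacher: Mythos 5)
Your primary route is essentially the paper's own proof. The paper expands everything into exactly the normal-form monomials you call $T_n$: it introduces the partial sums $I^{j,k}_m=\sum_{n=m}^{k-1}\binom{2k+j}{2n+j-2m}(-2)^{-n}T_n$, uses $D_{A_1}{}^{B_1}D_{A_2}{}^{B_2}\phi_{\dots B_1B_2}=-\tfrac12(\Delta\phi)_{\dots A_1A_2}+D_{A_1A_2}(\sdiv\phi)_{\dots}$ (a consequence of the identity $D_{A}{}^{C}D_{BC}=-\tfrac12\epsilon_{AB}\Delta$ you cite) to obtain the telescoping recursion $I^{j,k}_m=I^{j,k}_{m+1}+(\text{a }\Delta^k\text{-term})+(\text{a }\stwist\sdiv\text{-term})$, solves it, and recognizes the resulting double sum of divergence terms as $\stwist_{N-2}\mathcal{F}_{N-2}\sdiv_N$; Lemma~\ref{Gcurlexpansion} and Definition~\ref{def:mathcalg} supply the expansions of $\mathcal{G}_{2k}\scurl_{2k}$ and $\mathcal{G}_{2k+1}$ exactly as you describe. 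So your coefficient-matching/telescoping plan is precisely what the paper carries out.

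Your ``cleaner alternative'' by induction on $k$, however, does not work as stated. The identity for $k-1$ is a statement about spinors of valence $2(k-1)$ (resp.\ $2(k-1)+1$) and the operators $\mathcal{G}_{2k-2}$, $\mathcal{F}_{2k-4}$, $\sdiv_{2k-2}$, whereas the identity for $k$ lives on the different bundle $S_{2k}$ (resp.\ $S_{2k+1}$): the valence grows with $k$, not just the power of the Laplacian. Applying $\Delta_N$ to the $(k-1)$-identity therefore yields a statement about $\Delta^{k}$ acting on valence-$(2k-2)$ spinors, not the claimed identity on $\mathcal{S}_{2k}$, and the relations $\sdiv\mathcal{G}=0$ and $\mathcal{G}\stwist=0$ from Proposition~\ref{divGproperty} do not bridge this change of valence. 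The direct expansion is the argument to rely on.
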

\begin{proof}
For both formulae, we will use the following help quantity for the spin-$(k+j/2)$ case
\begin{align*}
I^{j,k}_m\equiv{}&\negmedspace\sum_{n=m}^{k-1}\binom{2k+j}{2n+j-2m}(-2)^{-n} \underbrace{D_{(A_1}{}^{B_1}\cdots D_{A_{2k-2n}}{}^{B_{2k-2n}}}_{2k-2n} (\Delta^{n}_{2k+j}\phi)_{A_{2k-2n+1}\dots A_{2k+j})B_1\dots B_{2k-2n}}
\end{align*}
Multiplying $D_{A_1}{}^{B_1}D_{C_1}{}^{B_2}\phi_{A_3\dots A_{k}C_2B_2}$ with 
$\epsilon_{A_2}{}^{C_1}\epsilon_{B_1}{}^{C_2}=\epsilon_{A_2B_1}\epsilon^{C_1C_2}+\epsilon_{A_2}{}^{C_2}\epsilon_{B_2}{}^{C_1}$ and using $D_{A}{}^{C}D_{BC}=-\tfrac{1}{2}\epsilon_{AB}\Delta$, we get
\begin{align*}
D_{A_1}{}^{B_1}D_{A_2}{}^{B_2}\phi_{A_3\dots A_{k}B_1B_2}
={}&
-\tfrac{1}{2}(\Delta_k\phi)_{A_1\dots A_k}+
D_{A_1 A_2}(\sdiv_k\phi)_{A_3\dots A_{k}}.
\end{align*}
Using this in the definition of $I^{j,k}_{m}$ gives
\begin{align}
I^{j,k}_m
={}&\sum_{n=m}^{k-1}\binom{2k+j}{2n+j-2m}(-2)^{-n-1}\nonumber\\
&\quad\times \underbrace{D_{(A_1}{}^{B_1}\cdots D_{A_{2k-2n-2}}{}^{B_{2k-2n-2}}}_{2k-2n-2} (\Delta^{n+1}_{2k+j}\phi)_{A_{2k-2n-1}\dots A_{2k+j})B_1\dots B_{2k-2n-2}}\nonumber\\
&+\sum_{n=m}^{k-1}\binom{2k+j}{2n+j-2m}(-2)^{-n} \nonumber\\
&\quad\times D_{(A_1A_2}\underbrace{D_{A_3}{}^{B_3}\cdots D_{A_{2k-2n}}{}^{B_{2k-2n}}}_{2k-2n-2} (\sdiv_{2k+j}\Delta^{n}_{2k+j}\phi)_{A_{2k-2n+1}\dots A_{2k+j})B_3\dots B_{2k-2n}}\nonumber\\
={}&I^{j,k}_{m+1}+\binom{2k+j}{2k+j-2m}(-2)^{-k}(\Delta^{k}_{2k+j}\phi)_{A_1\dots A_{2k+j}}
+\sum_{n=m}^{k-1}\binom{2k+j}{2n+j-2m}(-2)^{-n} \nonumber\\
&\quad\times D_{(A_1A_2}\underbrace{D_{A_3}{}^{B_3}\cdots D_{A_{2k-2n}}{}^{B_{2k-2n}}}_{2k-2n-2} (\sdiv_{2k+j}\Delta^{n}_{2k+j}\phi)_{A_{2k-2n+1}\dots A_{2k+j})B_3\dots B_{2k-2n}}.\label{IRecursion}
\end{align}
Here, we have changed $n\rightarrow n-1$ in the first sum, and identified that as $I^{j,k}_{m+1}$ plus the term where $n=k$, which gives us the $\Delta^{k}$-term. We can easily solve the recursion \eqref{IRecursion} and get
\begin{align*}
I^{j,k}_0
={}&\sum_{m=0}^{k-1}\binom{2k+j}{2k+j-2m}(-2)^{-k}(\Delta^{k}_{2k+j}\phi)_{A_1\dots A_{2k+j}}
+\sum_{m=0}^{k-1}\sum_{n=m}^{k-1}\binom{2k+j}{2n+j-2m}(-2)^{-n}\nonumber\\ 
&\quad \times D_{(A_1A_2}\underbrace{D_{A_3}{}^{B_3}\cdots D_{A_{2k-2n}}{}^{B_{2k-2n}}}_{2k-2n-2} (\sdiv_{2k+j}\Delta^{n}_{2k+j}\phi)_{A_{2k-2n+1}\dots A_{2k+j})B_3\dots B_{2k-2n}}\nonumber\\
={}&\sum_{m=0}^{k-1}\binom{2k+j}{2m}(-2)^{-k}(\Delta^{k}_{2k+j}\phi)_{A_1\dots A_{2k+j}}
+\sum_{n=0}^{k-1}\sum_{m=0}^{k-1-n}\binom{2k+j}{2n+2m+2}(-2)^{n+1-k}\nonumber\\ 
&\quad \times D_{(A_1A_2}\underbrace{D_{A_3}{}^{B_3}\cdots D_{A_{2n+2}}{}^{B_{2n+2}}}_{2n} (\Delta^{k-1-n}_{2k+j-2}\sdiv_{2k+j}\phi)_{A_{2n+3}\dots A_{2k+j})B_3\dots B_{2n+2}}\nonumber\\
={}&\sum_{m=0}^{k-1}\binom{2k+j}{2m}(-2)^{-k}(\Delta^{k}_{2k+j}\phi)_{A_1\dots A_{2k+j}}
-2^{j-1}(-2)^{k}(\stwist_{2k+j-2}\mathcal{F}_{2k+j-2}\sdiv_{2k+j}\phi)_{A_1\dots A_{2k+j}}.
\end{align*}
In the second sum we have changed the order of summation followed by the change $n\rightarrow k-n-1$. 

For the operators acting on an odd number of indices we have
\begin{align*}
(\mathcal{G}_{2k+1}&\phi)_{A_1\dots A_{2k+1}}\nonumber\\
={}& 
\sum_{n=0}^{k}\binom{2k+1}{2n+1}(-2)^{-n} \underbrace{D_{(A_1}{}^{B_1}\cdots D_{A_{2k-2n}}{}^{B_{2k-2n}}}_{2k-2n} (\Delta^n_{2k+1}\phi)_{A_{2k-2n+1}\dots A_{2k+1})B_1\dots B_{2k-2n}}\nonumber\\
={}&I^{1,k}_0+\binom{2k+1}{2k+1}(-2)^{-k} (\Delta^k_{2k+1}\phi)_{A_1\dots A_{2k+1}}\nonumber\\
={}&\sum_{m=0}^{k}\binom{2k+1}{2k+1-2m}(-2)^{-k}(\Delta^{k}_{2k+1}\phi)_{A_1\dots A_{2k+1}}
-(-2)^k(\stwist_{2k-1}\mathcal{F}_{2k-1}\sdiv_{2k+1}\phi)_{A_1\dots A_{2k+1}}\nonumber\\
={}&(-2)^k (\Delta^{k}_{2k+1}\phi)_{A_1\dots A_{2k+1}}
-(-2)^k(\stwist_{2k-1}\mathcal{F}_{2k-1}\sdiv_{2k+1}\phi)_{A_1\dots A_{2k+1}}.
\end{align*}
Hence, we have \eqref{LaplacianAsGodd}.

For the operators acting on an even number of indices we can use \eqref{Gcurlexpansion} to obtain
\begin{align*}
(\mathcal{G}_{2k}&\scurl_{2k}\phi)_{A_1\dots A_{2k}} \nonumber\\
={}&
\sum_{n=0}^{k}\binom{2k}{2n}(-2)^{-n} \underbrace{D_{(A_1}{}^{B_1}\cdots D_{A_{2k-2n}}{}^{B_{2k-2n}}}_{2k-2n} (\Delta^n_{2k+1}\phi)_{A_{2k-2n+1}\dots A_{2k})B_1\dots B_{2k-2n}}\nonumber\\
={}& I^{0,k}_0+\binom{2k}{2k}(-2)^{-k} (\Delta^k_{2k}\phi)_{A_1\dots A_{2k}}\nonumber\\
={}&\sum_{m=0}^{k}\binom{2k}{2k-2m}(-2)^{-k}(\Delta^{k}_{2k}\phi)_{A_1\dots A_{2k}}
+(-2)^k(\stwist_{2k-2}\mathcal{F}_{2k-2}\sdiv_{2k}\phi)_{A_1\dots A_{2k}}\nonumber\\
={}&-(-2)^{k-1}(\Delta^{k}_{2k}\phi)_{A_1\dots A_{2k}}
+(-2)^{k-1}(\stwist_{2k-2}\mathcal{F}_{2k-2}\sdiv_{2k}\phi)_{A_1\dots A_{2k}}.
\end{align*}

Hence, we have \eqref{LaplacianAsGeven}.
\end{proof}

\bibliographystyle{abbrv}
\bibliography{biblio1}
\end{document}